\author{Peter Kristel and Benedikt Peterseim} 
\title{A Topologically Enriched Probability Monad on the Cartesian Closed Category of CGWH Spaces}
\newcommand{\R}{\mathbb{R}}
\newcommand{\K}{\mathbb{K}}
\newcommand{\Ba}{\mathcal{B}a} 
\newcommand{\xmark}{\ding{55}} 
\newcommand{\cmark}{\ding{51}} 
\newcommand{\mc}[1]{\mathcal{#1}}
\newcommand{\1}{\mathds{1}}
\newcommand{\cgwh}{\mathsf{CGWH}} 
\newcommand{\cg}{\mathsf{CG}} 
\newtheorem{theorem}{Theorem}[section]
\newtheorem{remark}[theorem]{Remark}
\newtheorem{warning}[theorem]{Warning}
\newtheorem{lemma}[theorem]{Lemma}
\newtheorem{corollary}[theorem]{Corollary}
\newtheorem{proposition}[theorem]{Proposition}
\newtheorem*{representationtheorem}{Representation Theorem}
\theoremstyle{definition}
\newtheorem{definition}[theorem]{Definition}
\theoremstyle{definition}
\newtheorem{example}[theorem]{Example}
\begin{document}

\maketitle
\begin{abstract}
    Probability monads on categories of topological spaces are classical objects of study in the categorical approach to probability theory, with important applications in the semantics of probabilistic programming languages. We construct a probability monad on the category of compactly generated weakly Hausdorff (CGWH) spaces, a (if not \emph{the}) standard choice of convenient category of topological spaces. 
    Because a general version of the Riesz representation theorem adapted to this setting plays a fundamental role in our construction, we name it the \emph{Riesz probability monad}.
    We show that the Riesz probability monad is a simultaneous extension of the classical Radon and Giry monads that is topologically enriched. Topological enrichment corresponds to a strengthened \emph{continuous mapping theorem} (in the sense of probability theory). 
    In addition, restricting the Riesz probability monad to the Cartesian closed subcategory of weakly Hausdorff quotients of countably based (QCB) spaces results in a probability monad which is strongly affine, ensuring that the notions of independence and determinism interact as we would expect. 
\end{abstract}


\setcounter{tocdepth}{2}
\tableofcontents

\section{Introduction and results}\label{sec-Introduction}

\subsection{Background: probability monads} Probability monads are one of the centrepieces of category-theoretically informed approaches to probability theory. 
As such, they have found important applications in the context of probabilistic (features of) programming languages \cite{jones1989probabilistic,heunen2017convenient,vakar2019domain}. 
The fundamental idea of a probability monad is that forming the space of probability measures should yield a monad on a suitable base category of ``sample spaces''. 
This was first proposed by Giry in \cite{giry1981categorical}, who constructed two probability monads: the Giry monad on the category $\mathsf{Meas}$ of measurable spaces and a probability monad on the category $\mathsf{Pol}$ of Polish spaces (i.e.~completely metrisable, separable topological spaces), also referred to as the Giry monad. 
Since then, many variations of this idea have been studied \cite{keimel2008monad,fritz2019aprobability,fritz2021probability,van2021probability}; see also \cite{nlab2024monads} for a quick overview. 

The present work is partially based on the fourth chapter of the second author's Master's thesis \cite{peterseim2024monadic}, with a simplified construction,  streamlined towards the results we now present.

\subsection{Overview and main results} In this paper, we consider the category of compactly generated weakly Hausdorff spaces, or CGWH spaces for short (see Definition \ref{def:hk-space}).
\subsubsection{The Riesz probability monad} We construct a monad, $(\mc{P},\delta,\Rbag)$, on this category with the property that, for any CGWH space $X$, the set underlying $\mc{P}(X)$ is the set of $k$-regular (see Definition \ref{definition:kRegularMeasures}) Baire probability measures on $X$.
The component of the natural transformation $\delta: \1 \Rightarrow \mc{P}$ at $X$ is the map that sends $x \in X$ to the Dirac measure supported at $x$. The multiplication $\Rbag: \mc{P}^{2} \Rightarrow \mc{P}$ is given at $X$ conceptually as the $\mc{P}(X)$-valued valued integral, 
    $$ \Rbag: \mc{P}(\mc{P}(X)) \to \mc{P}(X), \;\; \pi \mapsto \int_{\mc{P}(X)} \mu\: \mathrm{d} \pi(\mu). $$
We will give a precise definition in Section \ref{sec:TheMonad}.
As our construction is based on a version of Riesz representation theorem, we call $\mc{P}$ the \emph{Riesz probability monad}. 
\subsubsection{The Baire probability monad} An important Cartesian closed subcategory of $\cgwh$ is the category $\mathsf{QCB}_{h}$ of weakly Hausdorff quotients of countably based (QCB) spaces (Definition \ref{definition:QCBspaces}). The Riesz probability monad $\mc{P}$ restricts to a commutative enriched monad on $\mathsf{QCB}_{h}$, and whenever $X$ is a QCB space, $\mc{P}(X)$ consists exactly of the Baire measures on $X$. For this reason, we call the resulting monad $\mc{P}$ on $\mathsf{QCB}_{h}$ the \emph{Baire probability monad}. (Note that this monad is distinct from the ``Baire monad'' considered in \cite{van2021probability}.)  
\subsubsection{The Baire probability monad is strongly affine} Weakly Hausdorff QCB spaces do not only form a well-behaved category, they are also well-behaved objects in that they share many countability properties with Polish spaces (see Proposition \ref{proposition:propertiesOfQCBspaces}). One consequence of this is that product measures on QCB spaces are particularly simple to understand in this situation (see Lemma \ref{lemma:productMeasuresOnQCBspaces}), from which we deduce that the Baire probability monad on $\mathsf{QCB}_{h}$ is \emph{strongly affine} (see Section \ref{sec:BaireMonadStronglyAffine} for the definition and significance of this property). 
\subsubsection{Relation to classical probability monads} The Baire probability monad on $\mathsf{QCB}_{h}$ further restricts to the classical Giry monad on the category $\mathsf{Pol}$ on Polish spaces (see Section \ref{sec:relationToGiryMonad}). Likewise, the Riesz probability monad restricts to the well-known Radon monad on the category $\mathsf{CompHaus}$ of compact Hausdorff spaces (see Section \ref{sec:relationToRadonMonad}). The situation is summarised in Figure \ref{figure:monadOverview}.

\begin{figure}[ht]\label{figure:monadOverview}
\begin{equation*} 
\begin{tikzcd}
	& {\mathsf{CGWH}}\ar[loop, in=60,out=120,looseness=5]{}{\text{``Riesz monad''}} \\
	{\mathsf{QCB}_{h}}\ar[loop, in=60,out=120,looseness=5, "\text{``Baire monad''}"{anchor=south}] && {\mathsf{CompHaus}}\ar[loop, in=-60,out=-120,looseness=5]{}{\qquad \qquad\qquad\quad\text{``Radon monad''}} \\
	{\mathsf{Pol}}
	\arrow[hook, from=2-1, to=1-2]
	\arrow[hook', from=2-3, to=1-2]
	\arrow[hook, from=3-1, to=2-1]
    \ar[loop, in=190,out=-100,looseness=5]{}{\text{``Giry monad''}}
\end{tikzcd}
\end{equation*}\caption{Relations between different probability monads. The hooked arrows are fully faithful functors. Each of these can be (trivially) extended to a morphism of monads.}
\end{figure}
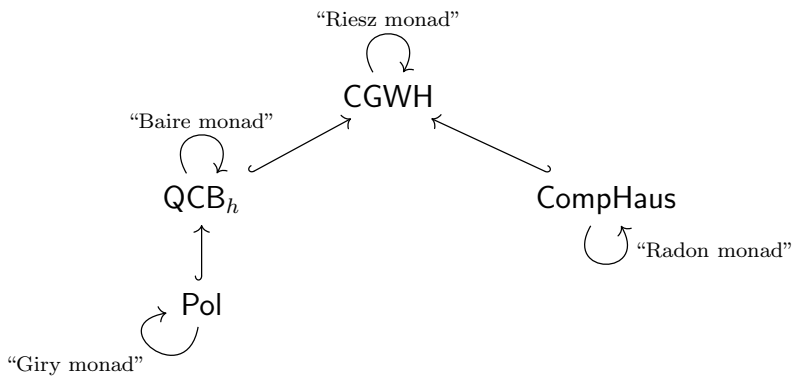

\subsubsection{The role of Cartesian closedness} There are several reasons why we choose $\cgwh$, along with its smaller variant $\mathsf{QCB}_{h}$, as our base categories. 
First, CGWH spaces serve as a common Cartesian closed substitute for topological spaces, which raises the question whether the well-known probability monads defined on categories of topological spaces \cite{van2021probability,fritz2021probability} admit an analogue on $\cgwh$.
Secondly, the fact that $\cgwh$ and $\mathsf{QCB}_{h}$ are Cartesian closed makes them suitable for treating the semantics of \emph{higher-order probabilistic programming languages}, as done in \cite{heunen2017convenient}, where the category $\mathsf{QBS}$ of \emph{quasi-Borel spaces} is employed instead. 
In contrast to quasi-Borel spaces, which were introduced as a Cartesian closed replacement for the category of measurable spaces, CGWH spaces are much more well-studied objects. 
In this sense, our choice of base category is more straightforward. 
Moreover, an important feature distinguishing the Baire probability monad on $\mathsf{QCB}_{h}$ from the probability monad on quasi-Borel spaces is that it is \emph{strongly affine}. 
Failure of the strongly affine property in the setting of quasi-Borel spaces leads to somewhat counter-intuitive phenomena, which we are thus able to avoid; see Section \ref{sec:BaireMonadStronglyAffine} for details. We do not know whether the Riesz probability monad is also strongly affine.

\subsubsection{Topological enrichment} Finally, the Riesz and Baire probability monads are enriched monads (over their respective base categories), and since we are working with categories of topological spaces, this means that they are \emph{topologically enriched}. Topological enrichment has a natural probabilistic interpretation as a \emph{strengthened continuous mapping theorem} (see Section \ref{sec:EnrichedStructure}). Table \ref{table:MonadComparison} gives an overview of several probability monads which have been proposed and how ours differs from these.

\begin{table}[ht]
\centering
\begin{tabular}{@{}llcccc@{}}
\toprule
Name  & Base Category       & Type of Measure                 & \begin{tabular}[c]{@{}c@{}}Strongly\\ Affine?\end{tabular} & \begin{tabular}[c]{@{}c@{}}Enriched \\ over CCC?\end{tabular} & Reference                   \\ \midrule
Giry  & $\mathsf{Meas}$     & Any                             & \cmark                                                     & \xmark                                                        & \cite{giry1981categorical}  \\
--    & $\mathsf{QBS}$      & see \cite{heunen2017convenient} & \xmark                                                     & \cmark                                                        & \cite{heunen2017convenient} \\
Giry  & $\mathsf{Pol}$      & Borel                           & \cmark                                                     & \xmark                                                        & \cite{giry1981categorical}  \\
Radon & $\mathsf{CompHaus}$ & Radon                           & \cmark                                                     & \xmark                                                        & \cite{swirszcz1974monadic}  \\
Riesz & $\mathsf{CGWH}$     & $k$-regular                     & \textbf{?}                                                          & \cmark                                                        & (this paper)                \\
Baire & $\mathsf{QCB}_{h}$     & Baire                           & \cmark                                                     & \cmark                                                        & (this paper)                \\ \bottomrule
\end{tabular}
\caption{Comparison of the Riesz and Baire probability monad to other examples from the literature. The Baire probability monad is the only one that is both enriched over a Cartesian closed base category (CCC) and known to be strongly affine.}\label{table:MonadComparison}
\end{table}

\subsection{Outline of the construction} Our construction of the Riesz and Baire probability monads can briefly be summarised as follows. Throughout this paper, $\mathbb{K}$ will denote the field of real or complex numbers.

\subsubsection{$k$-regular Baire measures} We first define the set $\mc{M}(X)$ of $\mathbb{K}$-valued \emph{$k$-regular Baire measures} on a CGWH space $X$ (Definition \ref{definition:kRegularMeasures}). When $X$ is a QCB space, every $\mathbb{K}$-valued Baire measure is $k$-regular (Lemma \ref{lemma:radonMeasuresKregular}), so $\mc{M}(X)$ is simply the set of Baire measures on $X$.

\subsubsection{A Riesz representation theorem} We then prove a Riesz representation theorem (Theorem \ref{theorem:RieszForCbAndM}), 
    $$ \mc{M}(X) = C_b(X)', $$
identifying $\mc{M}(X)$ with the continuous dual space $C_b(X)'$ of the space $C_b(X)$ of continuous bounded functions on $X$, equipped with its natural CGWH topology (Definition \ref{definition:CbCGWHSpace}). This natural CGWH topology on $C_b(X)$ is \emph{not} the Banach space topology, unless $X$ is compact (see Section \ref{section:CbAsLinearCGWHspace} for further motivation regarding this choice of topology).

\subsubsection{The endofunctor $\mc{M}$} The space $C_b(X)$ is an example of a \emph{linear CGWH space} (Definition \ref{definition:LinearCGWHspace}), a notion similar to, but generally distinct from, that of a topological vector space.
Due to the Cartesian closedness of $\cgwh$, the continuous dual space $V'$ of a linear CGWH space $V$ acquires a natural CGWH topology, under which it becomes the \emph{natural dual} $V^\wedge$ of $V$ (Definition \ref{definition:naturalDual}). Thus, under the identification $\mc{M}(X)=C_b(X)^\wedge$, the space of $k$-regular measures becomes a CGWH space and $\mc{M}$ an endofunctor on $\cgwh$. 

\subsubsection{The monad structure} The endofunctor $\mc{M}=C_b(-)^\wedge$ now being given by a double-dualisation-type operation allows us to endow it with a $\cgwh$-enriched monad structure in a seamless way. The resulting \emph{Riesz monad} $\mc{M}$ restricts to a (commutative enriched) monad on the category $\mathsf{QCB}_{h}$ of weakly Hausdorff QCB spaces. Since $\mc{M}(X)$ consists exactly of the Baire measures on $X$ when $X$ is a QCB space, we call the resulting monad the \emph{Baire monad}. 

\subsubsection{Passing to probability measures} Finally, passing to the subspace $$\mc{P}(X) \subseteq \mc{M}(X) $$ 
of probability measures yields the Riesz and Baire \emph{probability} monads on $\cgwh$ and $\mathsf{QCB}_{h}$, respectively (Theorem \ref{theorem:rieszProbabilityMonad}).

\subsection{A monads-probability dictionary} Probability monads clarify various \emph{structural} aspects of probability: there is an interesting correspondence between properties of (or structure on) monads and certain phenomena in probability theory, summarised in Table \ref{table:monadProbabilityDictionary}.
Our addition to this dictionary is the role of topological enrichment.

\begin{table}[htp]
\centering
\begin{tabular}{@{}ccc@{}}
\toprule
\textbf{Monads}                                                                       & \textbf{Probability Theory}                                                                                                 & \begin{tabular}[c]{@{}c@{}}\textbf{Section/}\\ \textbf{Reference}\end{tabular} \\[.2cm] \midrule
unit                                                                                  & \begin{tabular}[c]{@{}c@{}}Dirac measures/\\ deterministic random variables\end{tabular}                                    & \ref{sec:passingToPX}                                                          \\[.4cm] 
multiplication                                                                        & \begin{tabular}[c]{@{}c@{}}expectation of random \\ probability measure\end{tabular}                                        & \ref{sec:passingToPX}                                                          \\[.4cm] 
Kleisli morphism                                                                      & Markov kernel                                                                                                               & \cite{fritz2020synthetic}                                                      \\[.2cm] 
\begin{tabular}[c]{@{}c@{}}symmetric monoidal structure/\\ commutativity\end{tabular} & \begin{tabular}[c]{@{}c@{}}product measures/\\ Fubini's theorem\end{tabular}                                                & \ref{sec:commutativityAndFubini}                                               \\[.6cm] 
strongly affine property                                                              & \begin{tabular}[c]{@{}c@{}}independence of deterministic \\ random variables \\ (of any other random variable)\end{tabular} & \ref{sec:BaireMonadStronglyAffine}                                             \\[.7cm] 
\emph{topological enrichment}                                                         & \begin{tabular}[c]{@{}c@{}}\emph{strengthened}\\ \emph{continuous mapping theorem}\end{tabular}                            & \ref{sec:EnrichedStructure}                                                    \\[.4cm]  \bottomrule
\end{tabular}
\caption{A monads--probability dictionary.}\label{table:monadProbabilityDictionary}
\end{table}

\section{Compactly generated weakly Hausdorff (CGWH) spaces}

In this section, we give an exposition of the category of CGWH spaces.
A thorough account is given in \cite{strickland2009category}.

\begin{warning}\label{warn:CGspaceDefinitions}
    In the literature, the term ``compactly generated (CG) space'', and its occasional synonym ``$k$-space'', have various (slightly different) meanings. 
    Our conventions agree with \cite{strickland2009category}. For clarity, we give all relevant definitions below.
\end{warning}

\begin{definition}\label{def:CGTopology}
    Let $X$ be a set.
    A topology on $X$ is \emph{compactly generated}, or \emph{CG}, if a subset $U \subseteq X$ is open if and only if $f^{-1}(U) \subseteq K$ is open for all continuous maps $f: K \rightarrow X$, where $K$ is compact Hausdorff.

    If $X$ is equipped with a CG topology, then $X$ is said to be a \emph{compactly generated space}, or \emph{CG space}. We write $\cg$ for the category of CG spaces with continuous maps as morphisms.
\end{definition}

Let $\tau$ be a topology on a set $X$.
We say that $U\subseteq X$ is $k(\tau)$-open if $f^{-1}(U) \subseteq K$ is open in $K$ for all compact Hausdorff spaces $K$ and all continuous maps $f:K \rightarrow X$.
The following fundamental lemma is easily verified directly.
\begin{lemma}\label{lem:kificiation}
    The collection of $k(\tau)$-open sets forms a topology on $X$.
    Additionally, every $\tau$-open set is $k(\tau)$-open.
\end{lemma}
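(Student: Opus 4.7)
The plan is to verify the three axioms of a topology for the collection $\tau_k$ of $k(\tau)$-open subsets of $X$, and then to observe the inclusion $\tau \subseteq \tau_k$. All the work is purely formal and rests on the fact that preimages commute with arbitrary unions and finite intersections; the main thing to keep track of is what quantifiers are involved.

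First I would dispatch the trivial cases: for any continuous $f \colon K \to X$ from a compact Hausdorff space $K$, one has $f^{-1}(\emptyset) = \emptyset$ and $f^{-1}(X) = K$, both open in $K$, so $\emptyset, X \in \tau_k$.

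Next, for a family $\{U_i\}_{i \in I} \subseteq \tau_k$ and any continuous $f \colon K \to X$ with $K$ compact Hausdorff, I would use the identity
\[
f^{-1}\!\left(\bigcup_{i \in I} U_i\right) = \bigcup_{i \in I} f^{-1}(U_i),
\]
which is a union of open subsets of $K$ by the defining property of each $U_i$, hence open. Finite intersections are handled identically using $f^{-1}(U \cap V) = f^{-1}(U) \cap f^{-1}(V)$. This establishes that $\tau_k$ is a topology on $X$.

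Finally, for the second assertion, I would simply note that if $U \in \tau$ and $f \colon K \to X$ is continuous (with respect to $\tau$), then $f^{-1}(U)$ is open in $K$ by the definition of continuity. Since this holds for \emph{every} such $f$ and $K$, $U \in \tau_k$, giving $\tau \subseteq \tau_k$. There is no real obstacle here; the lemma is essentially a bookkeeping exercise, and the only thing worth emphasising is that the quantification over all pairs $(K,f)$ is preserved by the set-theoretic identities for preimages.
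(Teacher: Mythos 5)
Your proof is correct and is exactly the routine verification the paper has in mind when it says the lemma ``is easily verified directly'' (the paper omits the proof entirely). The one point that matters --- that continuity of $f\colon K\to X$ is taken with respect to the original topology $\tau$ throughout, so the quantification over pairs $(K,f)$ is the same in every step --- is handled correctly.
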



Observe that a topological space $(X,\tau)$ is CG if and only if $k(\tau) = \tau$. The space $kX := (X, k(\tau))$ is called the \emph{$k$-ification} of $(X, \tau)$.

The following result shows that the $k$-operator on topologies is idempotent.
\begin{lemma}\label{lemma:kOperatorIdempotent}
    Let $K$ be a compact Hausdorff space.
    A map $f: K \rightarrow X$ is $\tau$-continuous, if and only if, it is $k(\tau)$-continuous.
    Thus $k(k(\tau)) = k(\tau)$, and $(X,k(\tau))$ is a CG space.
\end{lemma}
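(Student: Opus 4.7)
The plan is to establish the continuity equivalence first and then deduce idempotency of $k$ and the CG-property as corollaries, since both follow formally from the equivalence together with the preceding Lemma~\ref{lem:kificiation}.

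For the equivalence, the backward direction ($k(\tau)$-continuous implies $\tau$-continuous) is immediate from Lemma~\ref{lem:kificiation}: every $\tau$-open set is $k(\tau)$-open, so if preimages of $k(\tau)$-open sets under $f$ are open in $K$, in particular preimages of $\tau$-open sets are. The forward direction is essentially a tautology built into the definition of $k(\tau)$: given a $k(\tau)$-open $U \subseteq X$, the defining property of $k(\tau)$ asserts that $g^{-1}(U)$ is open in $K'$ for \emph{every} $\tau$-continuous map $g \colon K' \to X$ out of a compact Hausdorff space; specialising to our $f \colon K \to X$ shows $f^{-1}(U)$ is open. So the content here is really just recognising that the definition of $k(\tau)$-openness was designed precisely to make this hold.

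For $k(k(\tau)) = k(\tau)$, one inclusion is again a direct instance of Lemma~\ref{lem:kificiation}, applied with $k(\tau)$ playing the role of $\tau$. For the reverse inclusion, suppose $U$ is $k(k(\tau))$-open and let $f \colon K \to X$ be $\tau$-continuous with $K$ compact Hausdorff. By the equivalence just established, $f$ is also $k(\tau)$-continuous, hence $f^{-1}(U)$ is open in $K$; this means $U$ is $k(\tau)$-open. Finally, $(X, k(\tau))$ being a CG space is immediate from $k(k(\tau)) = k(\tau)$ and the observation stated just before the lemma that $(X,\tau)$ is CG exactly when $k(\tau) = \tau$.

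There is no genuine obstacle here; the proof is a careful unwinding of definitions whose only subtle point is the forward direction of the equivalence, where one must notice that the conclusion is literally an instance of the clause defining $k(\tau)$-openness rather than something requiring further argument.
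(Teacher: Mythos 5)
Your proof is correct and follows essentially the same route as the paper's: the backward implication from Lemma~\ref{lem:kificiation}, the forward implication by unwinding the definition of $k(\tau)$-openness, and the idempotency and CG-property deduced formally from the equivalence. The paper's version is merely terser, leaving the final deductions implicit.
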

\begin{proof}
    If $f:K \rightarrow X$ is $k(\tau)$-continuous, then it is also $\tau$-continuous, because every $\tau$-open set is $k(\tau)$-open.

    Now, suppose that $f:K \rightarrow X$ is $\tau$-continuous.
    It follows immediately from the definition that if $U$ is $k(\tau)$-open, then $f^{-1}(U)$ is open, whence $f$ is $k(\tau)$-continuous.
\end{proof}



In addition to idempotency, we also have monotonicity.

\begin{lemma}\label{lemma:kOperatorMonotone}
    Let $X$ be a set and let $\tau_1 \subseteq \tau_2$ be two topologies on $X$. Then $k(\tau_1) \subseteq k(\tau_2)$
\end{lemma}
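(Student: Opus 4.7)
The plan is to directly unwind the definitions; the lemma should fall out almost immediately from two observations about how continuity and $k$-openness relate when one topology refines another.

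First, I would observe that if $\tau_1 \subseteq \tau_2$ and $f : K \to X$ is any map from a compact Hausdorff space $K$, then $\tau_2$-continuity of $f$ implies $\tau_1$-continuity: every $V \in \tau_1$ is also in $\tau_2$, so $f^{-1}(V)$ is automatically open in $K$. In other words, refining the topology on $X$ can only shrink the class of continuous maps $K \to X$.

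Next, the key step: let $U \subseteq X$ be $k(\tau_1)$-open. To show $U$ is $k(\tau_2)$-open, pick any compact Hausdorff $K$ and any $\tau_2$-continuous $f : K \to X$. By the previous observation, $f$ is $\tau_1$-continuous, so the defining property of $k(\tau_1)$-openness applies and $f^{-1}(U)$ is open in $K$. Since this holds for every such $f$, $U$ is $k(\tau_2)$-open, and therefore $k(\tau_1) \subseteq k(\tau_2)$.

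There is no genuine obstacle here; the only thing worth flagging is the mild inversion of direction between refinement of topologies and inclusion of continuous-map classes, which is what makes the monotonicity statement go through in the stated direction (coarser $\tau_1$ forces the $k(\tau_1)$-openness condition to be tested against \emph{more} maps, hence is stronger, hence implies $k(\tau_2)$-openness). No separate appeal to Lemma \ref{lem:kificiation} or Lemma \ref{lemma:kOperatorIdempotent} is needed.
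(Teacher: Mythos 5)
Your proof is correct and is essentially identical to the paper's: both arguments take a $k(\tau_1)$-open set $U$, note that any $\tau_2$-continuous map $f:K\to X$ from a compact Hausdorff space is automatically $\tau_1$-continuous, and conclude that $f^{-1}(U)$ is open, so $U\in k(\tau_2)$. No differences worth noting.
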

\begin{proof}
    Let $U\in k(\tau_1)$. Let $K$ be a compact Hausdorff space and let $g: K \to (X, \tau_2)$ be continuous. Then $g$ is also continuous as a map $K \to (X, \tau_1)$. By definition of $k(\tau_1)$, this implies that $g^{-1}(U)$ is open in $K$. Since $K$ and $g$ were arbitrary, we conclude that $U\in k(\tau_2)$.
\end{proof}

\subsection{Limits and colimits of CG spaces} 

We now show that $\cg$ is bicomplete. First, we describe colimits in $\cg$.

Let $X$ be a set, and let $\{ Y_{i} \}_{i \in I}$ be a family of CG spaces.

\begin{lemma}\label{lemma:finalTopologyCG}
    Let $\{f_{i} \}_{i \in I}$ be a family of maps $f_{i}:Y_{i} \rightarrow X$. Let $\tau$ be the final topology with respect to $\{f_{i} \}$. Then $\tau$ is a CG topology.
\end{lemma}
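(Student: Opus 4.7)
The plan is to show $\tau = k(\tau)$ directly, by verifying both inclusions. The inclusion $\tau \subseteq k(\tau)$ is immediate from Lemma \ref{lem:kificiation}, so the content of the proof lies in establishing $k(\tau) \subseteq \tau$.

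To this end, I would pick an arbitrary $U \in k(\tau)$ and argue that $f_i^{-1}(U)$ is open in $Y_i$ for every $i \in I$; since $\tau$ is by construction the final topology with respect to $\{f_i\}$, this will imply $U \in \tau$. Fix $i$. Because $Y_i$ is a CG space, openness of $f_i^{-1}(U)$ in $Y_i$ can be checked by showing that $g^{-1}\bigl(f_i^{-1}(U)\bigr) = (f_i \circ g)^{-1}(U)$ is open in $K$ for every compact Hausdorff space $K$ and every continuous map $g \colon K \to Y_i$.

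The key observation is then that $f_i \colon Y_i \to (X,\tau)$ is continuous by definition of the final topology, so $f_i \circ g \colon K \to (X, \tau)$ is a continuous map from a compact Hausdorff space into $(X,\tau)$. By the very definition of $k(\tau)$ and our assumption $U \in k(\tau)$, the preimage $(f_i \circ g)^{-1}(U)$ is therefore open in $K$. Since $K$ and $g$ were arbitrary, this gives openness of $f_i^{-1}(U)$ in $Y_i$, and since $i$ was arbitrary, we conclude $U \in \tau$.

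There is no real obstacle here: the argument is an unravelling of the universal property of the final topology combined with the definition of $k(\tau)$, and the only substantive ingredient is that each $Y_i$ is already CG, which lets us probe openness in $Y_i$ using maps from compact Hausdorff spaces. If anything, the only point to be careful about is the direction of the logic at the last step, namely that continuity of $f_i \circ g$ into $(X,\tau)$ (and not into $(X, k(\tau))$) is what is needed in order to apply the defining property of $k(\tau)$-open sets.
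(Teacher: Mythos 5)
Your proof is correct and follows essentially the same route as the paper's: both reduce the claim to showing that each $f_i$ remains continuous for $k(\tau)$, and both verify this by composing with an arbitrary continuous map $g\colon K \to Y_i$ from a compact Hausdorff space, applying the definition of $k(\tau)$ to $f_i \circ g$, and invoking the CG property of $Y_i$. Nothing is missing.
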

\begin{proof}
    The final topology $\tau$ with respect to $\{f_{i} \}_{i \in I}$ is the finest topology such that all maps $f_{i}$ are continuous.
    By Lemma \ref{lem:kificiation} we have that $k(\tau)$ is at least as fine as $\tau$.
    Thus, it suffices to prove that all maps $f_{i}$ are continuous with respect to $k(\tau)$.

    So, let $U \subseteq X$ be $k(\tau)$-open.
    Let $K$ be a compact Hausdorff space, and $f: K \rightarrow Y_{i}$ a continuous map.
    We have that $f_{i} \circ f: K \rightarrow X$ is continuous, so $f^{-1}f_{i}^{-1}(U)$ is open.
    Because $Y_{i}$ is CG this implies that $f_{i}^{-1}(U)\subseteq Y_{i}$ is open.
\end{proof}

Given Lemma \ref{lemma:finalTopologyCG}, it follows that to obtain a colimit in $\cg$ one simply takes the colimit in the category of topological spaces, and observes that the resulting topological space is CG.

Next, we describe limits in $\cg$ in a dual manner.
\begin{definition}\label{def:initialkTop}
    Let $\{f_{i}\}_{i \in I}$ be a family of maps $f_{i}: X \rightarrow Y_{i}$.
    Let $\tau$ be the initial topology with respect to $\{ f_{i} \}_{i \in I}$.
    The \emph{initial CG-topology} on $X$ with respect to $\{f_{i}\}_{i \in I}$ is $k(\tau)$.
\end{definition}

It's clear that the initial CG-topology is CG. In fact, more is true. 

\begin{lemma}\label{lemma:charOfInitialCGtopology}
    The initial CG-topology with respect to the family of maps $f_{i}: X \rightarrow Y_{i}$ is the coarsest CG-topology making all the $f_i$ continuous.
\end{lemma}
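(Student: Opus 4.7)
The plan is to verify the two defining properties of a coarsest topology: first, that $k(\tau)$ itself makes every $f_i$ continuous, and second, that any other CG-topology $\sigma$ on $X$ making all the $f_i$ continuous contains $k(\tau)$ (so that $k(\tau)$ is indeed coarser).

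For the first part, I would invoke Lemma \ref{lem:kificiation} which states that $\tau \subseteq k(\tau)$. This means the identity map $(X, k(\tau)) \to (X, \tau)$ is continuous. Since $\tau$ is the initial topology with respect to $\{f_i\}$, each $f_i : (X, \tau) \to Y_i$ is continuous by definition, and composition then yields that $f_i : (X, k(\tau)) \to Y_i$ is continuous for every $i$. Together with the observation (already made after Definition \ref{def:initialkTop}) that $k(\tau)$ is CG, this shows $k(\tau)$ is a CG-topology making the $f_i$ continuous.

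For the second part, suppose $\sigma$ is any CG-topology on $X$ such that $f_i : (X, \sigma) \to Y_i$ is continuous for every $i$. By the universal property of the initial topology $\tau$, this forces $\tau \subseteq \sigma$. Applying monotonicity (Lemma \ref{lemma:kOperatorMonotone}) gives $k(\tau) \subseteq k(\sigma)$, and since $\sigma$ is CG we have $k(\sigma) = \sigma$. Hence $k(\tau) \subseteq \sigma$, which is precisely the statement that $k(\tau)$ is coarser than $\sigma$.

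I do not expect any genuine obstacle here; the argument is essentially a formal consequence of Lemmas \ref{lem:kificiation} and \ref{lemma:kOperatorMonotone} together with the universal property defining $\tau$. The only point requiring slight care is the direction of the inclusion: since \emph{coarser} means \emph{fewer open sets}, what needs to be shown is $k(\tau) \subseteq \sigma$ (not the reverse), and this is exactly what monotonicity combined with the idempotency $k(\sigma) = \sigma$ delivers.
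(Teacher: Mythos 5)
Your proof is correct and follows essentially the same route as the paper: both rely on the universal property of the initial topology to get $\tau \subseteq \sigma$ and then apply monotonicity (Lemma \ref{lemma:kOperatorMonotone}) together with $k(\sigma)=\sigma$ to conclude $k(\tau)\subseteq\sigma$. If anything, yours is slightly more complete, since you explicitly check that $k(\tau)$ itself makes the $f_i$ continuous via $\tau\subseteq k(\tau)$, a step the paper leaves implicit.
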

\begin{proof}
    The initial topology $\tau$ is the coarsest topology making all the $f_i$ continuous. So if $\tau'\subseteq k(\tau)$ is any CG-topology making all $f_i$ continuous, then $\tau \subseteq \tau'$ and since $\tau'$ is a CG-topology, $k(\tau)\subseteq k(\tau') = \tau'$ (see Lemmas \ref{lemma:kOperatorIdempotent} and \ref{lemma:kOperatorMonotone}). Therefore, $k(\tau)= \tau'$ and $k(\tau)$ is indeed the coarsest topology making all the $f_i$ continuous. 
\end{proof}

Using this, we describe limits in $\cg$ as follows.

\begin{lemma}\label{lem:LimitsInCG}
    Let $X_\bullet: I \to \cg$ be a diagram of CG spaces.
    Let $|X|$ be the limit of $|X_\bullet|$ as taken in the category of sets (where $|\cdot|: \cg \to \mathsf{Set}$ is the forgetful functor), and let $\tau$ be the initial CG-topology with respect to the family of maps $f_i: |X| \to |X_i|$ (the universal cone).
    Then the space $X=(|X|,\tau)$ is the limit of $X_\bullet$ in $\cg$.
\end{lemma}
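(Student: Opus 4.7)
The plan is to verify the universal property of the limit directly, using the description of the initial CG-topology established in Lemma \ref{lemma:charOfInitialCGtopology} together with the idempotency/monotonicity results for the $k$-operator. The scheme is standard: first check that $(|X|,\tau)$ with the canonical cone $\{f_i\}_{i\in I}$ really is a cone in $\cg$, then show every competing cone factors uniquely.

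For the cone property, the maps $f_i$ are continuous by construction of $\tau$ as the initial CG-topology, and the cocycle condition $X_\alpha \circ f_i = f_j$ for every morphism $\alpha: i\to j$ in $I$ is inherited from the set-level limit. Next, given any cone $(Y, \{g_i: Y\to X_i\}_{i\in I})$ in $\cg$, the set-theoretic universal property of $|X|$ produces a unique function $g: |Y|\to |X|$ with $f_i\circ g = g_i$. So the only work is to show this $g$ is continuous as a map $Y \to (|X|,\tau)$ where $\tau = k(\tau_0)$ and $\tau_0$ is the ordinary initial topology on $|X|$ with respect to $\{f_i\}$.

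The key manoeuvre is to reduce continuity of $g$ to continuity on compact Hausdorff probes, exploiting that $Y$ is itself CG. For any compact Hausdorff $K$ and any continuous $h: K\to Y$, the composite $g\circ h: K \to |X|$ satisfies $f_i\circ(g\circ h) = g_i\circ h$ for every $i$, and each $g_i\circ h$ is continuous. By the universal property of the ordinary initial topology $\tau_0$, this forces $g\circ h: K\to (|X|,\tau_0)$ to be continuous. Applying Lemma \ref{lemma:kOperatorIdempotent} (continuity from a compact Hausdorff space into $(|X|,\tau_0)$ coincides with continuity into $(|X|, k(\tau_0))$), we upgrade this to continuity of $g\circ h: K\to(|X|,\tau)$. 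Since $Y$ is CG and $h$ was an arbitrary probe, Definition \ref{def:CGTopology} then yields continuity of $g: Y\to (|X|,\tau)$.

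I expect the only subtle point to be this two-step passage between the ordinary initial topology $\tau_0$ and its $k$-ification $\tau = k(\tau_0)$; once one is disciplined about which universal property is being invoked at each stage, everything else is bookkeeping. Uniqueness of $g$ is automatic from uniqueness at the set level.
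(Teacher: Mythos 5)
Your proof is correct, but it takes a different route from the paper's. The paper never probes $Y$ by compact Hausdorff spaces directly; instead it works in the lattice of topologies on $|X|$: since $Y$ is CG, the final topology $\tau_u$ on $|X|$ induced by the candidate map $u$ is itself a CG topology (Lemma \ref{lemma:finalTopologyCG}), and because $\tau = k(\tau_0)$ is the \emph{coarsest} CG topology making the $f_i$ continuous (Lemma \ref{lemma:charOfInitialCGtopology}), continuity of $u$ reduces to checking that each $f_i$ is $\tau_u$-continuous, which is exactly the continuity of the $g_i$. You instead test continuity of $g$ against compact Hausdorff probes $h: K \to Y$ (legitimate, since a map out of a CG space is continuous precisely when all such composites are), use the ordinary universal property of $\tau_0$ to get continuity of $g\circ h$ into $(|X|,\tau_0)$, and then upgrade to $(|X|,k(\tau_0))$ via Lemma \ref{lemma:kOperatorIdempotent}. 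Your version bypasses Lemmas \ref{lemma:finalTopologyCG} and \ref{lemma:charOfInitialCGtopology} entirely and exposes the key mechanism --- compact Hausdorff probes cannot distinguish a topology from its $k$-ification --- more directly; the paper's version is shorter on the page only because that mechanism has been packaged into the preceding lemmas. Both arguments are complete, and your identification of the $\tau_0$ versus $k(\tau_0)$ passage as the one genuinely delicate step is exactly right.
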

\begin{proof}
    Let $Y$ be any CG space and let $g_i: Y \to X_i$ ($i\in I$) be a cone over $X_\bullet$. By the universal property of $|X|$ as the limit of $|X_\bullet|$ in $\mathsf{Set}$, there exists a unique map $u: |Y| \to |X|$ such that $g_i = f_i \circ u$ for all $i\in I$. We need to show that $u$ is continuous as a map $Y \to X$. Since $Y$ is CG, the final topology $\tau_u$ with respect to $u$ is a CG topology on $X$ (Lemma \ref{lemma:finalTopologyCG}). To verify continuity of $u$, it suffices to show that $\tau_u$ contains (i.e.~is finer than) the topology of $X$. Because $X$ carries the coarsest CG-topology making all $f_i$ continuous (Lemma \ref{lemma:charOfInitialCGtopology}), this reduces the claim to showing that all $f_i$ are continuous with respect to $\tau_u$. But this is to say that for all $i\in I$ and all open $U\subseteq X_i$, we have that $u^{-1}(f^{-1}(U))$ is open in $Y$, which follows from the continuity of the $g_i$ and our assumption that $g_i= f_i \circ u$. 
\end{proof}

\begin{corollary}\label{corollary:colimitsInCG}
    The category $\cg$ of CG spaces is bicomplete. Limits are formed as described in Lemma \ref{lem:LimitsInCG}, and colimits are formed as in the category of topological spaces, by equipping the colimit of the underlying diagram of sets with the final topology (with respect to the universal co-cone). 
\end{corollary}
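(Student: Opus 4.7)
The plan is to reduce the corollary to the two lemmas already established. Lemma \ref{lem:LimitsInCG} directly gives all (small) limits in $\cg$, so the only substantive thing left is to verify that colimits exist and are computed as in $\mathsf{Top}$.

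For a diagram $X_\bullet : I \to \cg$, I would form the colimit $|X|$ of the underlying diagram of sets $|X_\bullet|$ and equip it with the final topology $\tau$ with respect to the universal co-cone $\{f_i : X_i \to |X|\}_{i\in I}$. This is exactly how one builds colimits in $\mathsf{Top}$. By Lemma \ref{lemma:finalTopologyCG}, $\tau$ is automatically CG, so $X := (|X|,\tau)$ is an object of $\cg$.

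It remains to check the universal property in $\cg$. Given any cocone $g_i : X_i \to Y$ in $\cg$, the universal property in $\mathsf{Set}$ yields a unique map $u : |X| \to |Y|$ with $u \circ f_i = g_i$ for all $i$. To conclude, I only need to observe that $u$ is continuous as a map $X \to Y$: by definition of the final topology $\tau$, this is equivalent to each $u \circ f_i = g_i$ being continuous, which holds by assumption. Uniqueness of $u$ is inherited from the set-level universal property, since the forgetful functor $|\cdot| : \cg \to \mathsf{Set}$ is faithful.

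There is no real obstacle here; the only subtle point is the appeal to Lemma \ref{lemma:finalTopologyCG} to ensure that the $\mathsf{Top}$-colimit topology does not need to be $k$-ified, so the inclusion $\cg \hookrightarrow \mathsf{Top}$ preserves colimits on the nose. Combining this with Lemma \ref{lem:LimitsInCG} yields bicompleteness and the description of limits and colimits claimed in the statement.
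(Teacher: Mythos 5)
Your proposal is correct and matches the paper's (largely implicit) argument: limits are handled by Lemma \ref{lem:LimitsInCG}, and colimits are the $\mathsf{Top}$-colimits, which Lemma \ref{lemma:finalTopologyCG} shows are already CG, so the universal property transfers directly. No issues.
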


\begin{warning}
    As a particular case of the limits of CG spaces constructed in Lemma \ref{lem:LimitsInCG}, the product $X\times Y$ of two CG spaces $X$ and $Y$ does \emph{not} generally coincide with the product $X\times_{\mathsf{Top}} Y$ as taken in the category $\mathsf{Top}$ of topological spaces, see \cite[Example 3.3.29]{engelking1989general}. Similar warnings apply to the various $\sigma$-algebras one may consider on such product spaces; these will play a crucial role later on. \emph{A priori,} $\mathcal{B} a(X \times Y)$, $\mathcal{B} a(X \times_{\mathsf{Top}} Y)$ and $\mathcal{B} a(X) \otimes \mathcal{B} a(Y)$ might \emph{not} agree. However, these $\sigma$-algebras do importantly all coincide for weakly Hausdorff QCB spaces (see Lemma \ref{lemma:productMeasuresOnQCBspaces}).
\end{warning}

\subsection{\texorpdfstring{$\cg$}{CG} is Cartesian closed} Let $X$ and $Y$ be CG spaces.
We now show that there is always an exponential object, denoted either $Y^X$ or $C(X,Y)$, in $\cg$.
The set underlying $Y^X = C(X,Y)$ is the set of continuous maps $X\to Y$. Its topology can be described as follows \cite[Definition 0.1]{booth1980monoidal}.

\begin{definition}\label{definition:compactOpenCGtopology}
    For every compact Hausdorff space $K$, every continuous map $g: K \to X$ and every open subset $U\subseteq Y$, let 
        $$ W(K, g, U) := \{f \in C(X, Y) |\: f(g(K)) \subseteq U \}.  $$
    Let $\tau$ be the topology on $C(X, Y)$ generated by all sets of the form $W(K, g, U)$. We define $Y^{X}$ to be the space of continuous maps equipped with the topology $k(\tau)$, and call $k(\tau)$ the \emph{compact-open CG-topology}. 
\end{definition}

The following theorem is standard and can be found in \cite[Theorem 4.4]{booth1980monoidal}, \cite[Theorem 3.1]{day1972reflection} or \cite[Theorem 3.6]{escardo2004comparing}.

\begin{theorem}
    The category $\cg$ is Cartesian closed. For all CG spaces $X,Y$, the exponential $Y^X$ is given by the space $C(X,Y)$, equipped with the compact-open CG-topology.
\end{theorem}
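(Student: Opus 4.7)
The plan is to establish the natural currying bijection
$$\cg(Z \times X, Y) \cong \cg(Z, Y^X)$$
for all CG spaces $X, Y, Z$. Since set-theoretic currying is already a bijection between the underlying sets of maps, the content is purely topological: currying and uncurrying preserve continuity. I would organise this in two main steps, plus a brief naturality check.

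First I would show that the transpose $\tilde h: Z \to Y^X$ of a continuous $h: Z \times X \to Y$ is continuous. Each partial map $\tilde h(z) = h(z, -)$ is continuous (it is $h$ composed with a continuous section), so $\tilde h$ lands in $C(X, Y)$. To show continuity into $k(\tau)$, I would apply Lemma \ref{lemma:kOperatorIdempotent}, reducing to continuity of $\tilde h \circ \phi: K \to (C(X,Y), \tau)$ for every continuous $\phi: K \to Z$ from a compact Hausdorff $K$. It then suffices to check preimages of subbasic opens $W(L, g, U)$:
$$(\tilde h \circ \phi)^{-1}(W(L, g, U)) \;=\; K \setminus \mathrm{pr}_K\bigl(\{(k,l) \in K \times L : h(\phi(k), g(l)) \notin U\}\bigr),$$
which is open because $\mathrm{pr}_K: K \times L \to K$ is a closed map (as $L$ is compact Hausdorff) and the bracketed set is closed in the compact Hausdorff product $K \times L$, using the continuity of $h$ and of the induced map $(k,l) \mapsto (\phi(k), g(l))$ into $Z \times X$.

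Second, I would show that evaluation $\mathrm{ev}: Y^X \times X \to Y$ is continuous. By Lemma \ref{lemma:kOperatorIdempotent} applied to the CG-product $Y^X \times X$, it suffices to verify that for every compact Hausdorff $K$ and continuous $\psi = (\psi_1, \psi_2): K \to Y^X \times X$, the composite $k \mapsto \psi_1(k)(\psi_2(k))$ is continuous. Here I would use that $\psi_1$, being continuous into $k(\tau)$, is a fortiori continuous into the coarser topology $\tau$, and then invoke the classical exponential adjunction for the locally compact Hausdorff space $K$ (where the $\cg$-product agrees with the $\mathsf{Top}$-product) to produce a continuous $\bar h: K \times X \to Y$ with $\bar h(k, x) = \psi_1(k)(x)$; composing with the continuous $(\mathrm{id}_K, \psi_2): K \to K \times X$ gives the required continuity. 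Once $\mathrm{ev}$ is continuous, uncurrying of any continuous $\tilde h: Z \to Y^X$ is continuous as the composite $\mathrm{ev} \circ (\tilde h \times \mathrm{id}_X)$. A purely pointwise calculation shows that currying and uncurrying are mutually inverse, and naturality in $Z$ is immediate, yielding the adjunction $(-) \times X \dashv Y^X$.

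The main obstacle is the second step: the two independent invocations of $k$-ification (once in the product $Y^X \times X$, once in the topology on $Y^X$ itself) must be handled in a way that avoids a circular appeal to continuity of $\mathrm{ev}$ during the proof of continuity of currying. The resolution is to probe everywhere by compact Hausdorff spaces $K$, on which $K \times X$ coincides with the $\mathsf{Top}$-product and the classical compact-open exponential adjunction of \cite{booth1980monoidal} is already available, so that joint continuity can be recovered from the separate continuity of $\psi_1$ and $\psi_2$ without any further $k$-ification arguments.
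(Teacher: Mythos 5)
The paper does not actually prove this theorem; it cites Booth--Tillotson, Day, and Escard\'o--Lawson--Simpson, so your attempt is a from-scratch reconstruction. Its overall architecture (currying, then evaluation, then the pointwise inverse check) is the standard one, and your first step is essentially complete and correct: probing $Z$ by compact Hausdorff spaces, reducing to the subbasic sets $W(L,g,U)$, and using that $\mathrm{pr}_K: K\times L \to K$ is a closed map is exactly how continuity of the transpose is established, and you correctly route the continuity of $(k,l)\mapsto(\phi(k),g(l))$ through the CG product via the fact that $K\times L$ is compact Hausdorff.

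The gap is in the second step, and it sits precisely at the heart of the theorem. From $\psi_1: K \to (C(X,Y),\tau)$ continuous you want a continuous $\bar h: K\times X\to Y$, and you justify this by ``the classical exponential adjunction for the locally compact Hausdorff space $K$''. That adjunction is $\mathsf{Top}(A\times K, B)\cong \mathsf{Top}(A, C_{c.o.}(K,B))$: it lets you uncurry maps into $C(K,B)$ because the \emph{exponent} $K$ is locally compact. Here $K$ is the \emph{base} and the exponent $X$ is an arbitrary CG space, so the classical statement does not apply, and for non-locally-compact $X$ the uncurrying direction of the ordinary compact-open adjunction is exactly what fails in $\mathsf{Top}$. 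The statement you need is nevertheless true, but it requires a direct argument: given $k_0\in K$ and an open $U\ni \psi_1(k_0)(\psi_2(k_0))$, use continuity of $\psi_1(k_0)\circ\psi_2$ and local compactness of $K$ to find a compact neighbourhood $N$ of $k_0$ with $\psi_1(k_0)(\psi_2(N))\subseteq U$; then $W(N,\psi_2|_N,U)$ is a subbasic $\tau$-open containing $\psi_1(k_0)$, and on $\mathrm{int}(N)\cap\psi_1^{-1}(W(N,\psi_2|_N,U))$ one has $\psi_1(k)(\psi_2(k))\in U$. Note that this argument is the one place where the generalised subbasis of Definition \ref{definition:compactOpenCGtopology} (sets $W(N,g,U)$ indexed by continuous maps $g$ from compact Hausdorff spaces, rather than by compact subsets of $X$) is genuinely needed; your proposal never engages with that distinction, and with the ordinary compact-open subbasis the step would fail for non--weakly-Hausdorff $X$. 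With this lemma supplied, the rest of your outline goes through.
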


\subsection{The weak Hausdorff property} The appropriate substitute for the Hausdorff property in the context of $\cg$ spaces turns out to be the following. 

\begin{definition}\label{def:hk-space}
    A topological space $X$ is \emph{weakly Hausdorff} (WH) if every compact subspace of $X$ is Hausdorff.
    
    A CG space that is also weakly Hausdorff is called a \emph{CGWH space}. We write $\cgwh$ for the full subcategory of $\cg$ whose objects are CGWH spaces.
\end{definition}

\begin{remark}
    If $(X,\tau)$ is a WH space, then a subset $U \subseteq X$ is $k(\tau)$-open if, and only if, $U\cap K$ is open in $K$ for all compact subsets $K\subseteq X$.
    Thus, in this case, we do not need to quantify over all continuous maps from compact Hausdorff spaces into $X$; consequently, some of the different definitions mentioned in Warning \ref{warn:CGspaceDefinitions} of CG space collapse.
    Moreover, we should point out that if $X$ is a CGWH space, and $Y$ is a CG space, then the compact-open CG-topology on $Y^{X}$ (Definition \ref{definition:compactOpenCGtopology}) agrees with the $k$-ification of the compact-open topology on $Y^{X}$; a fact that we will use implicitly later.
\end{remark}

A CG space $X$ is weakly Hausdorff if, and only if, the diagonal, 
    $$ (=_X) := \{(x, y) \in X \times X \, | \: x=y \}, $$
is closed in the product $X \times X$ in $\cg$ \cite[Proposition 2.3]{mccord1969classifying}. This is completely analogous to the fact that a topological space $X$ is Hausdorff if, and only if, the diagonal is closed in the $\mathsf{Top}$-product $X\times_{\mathsf{Top}} X$. 

\subsection{The weak Hausdorff quotient} CGWH spaces form an exponential ideal in $\cg$ (Theorem \ref{theorem:CGWHexpIdeal}), by means of the \emph{weak Hausdorff quotient}, or \emph{WH quotient}, for short.

\begin{definition}\label{def:WHQuotient}
    Let $X$ be a CG space. The \emph{WH quotient} of $X$ is $X/\!\sim_{\mathrm{WH}}$, where $\sim_{\mathrm{WH}}$ is the smallest closed (as a subset of $X \times X$) equivalence relation on $X$. (Here, the product is taken in $\cg$, as always.)
\end{definition}

Let us verify that the WH quotient deserves this name.

\begin{lemma}
    Let $X$ be a CG space. The WH quotient $X/\!\sim_{\mathrm{WH}}$ is a CGWH space.
\end{lemma}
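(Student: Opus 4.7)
The plan breaks into three steps: (i) check that $\sim_{\mathrm{WH}}$ is well-defined, (ii) show that the quotient $Y := X/\!\sim_{\mathrm{WH}}$ is a CG space, and (iii) show that $Y$ is weakly Hausdorff.

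For (i), note that the arbitrary intersection of equivalence relations is an equivalence relation, and the arbitrary intersection of closed subsets of $X\times X$ is closed. Since $X\times X$ is itself a closed equivalence relation, the family of all closed equivalence relations on $X$ is non-empty, and its intersection is therefore the smallest closed equivalence relation. So $\sim_{\mathrm{WH}}$ exists.

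For (ii), $Y$ is by definition the coequalizer in $\cg$ of the two projections $\sim_{\mathrm{WH}} \rightrightarrows X$; by Corollary \ref{corollary:colimitsInCG}, this coequalizer is simply the set-theoretic quotient equipped with the final topology with respect to the quotient map $q: X \to Y$, which is automatically a CG topology. Hence $Y\in\cg$.

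For (iii), let $q: X \to Y$ be the quotient map. The defining property of $\sim_{\mathrm{WH}}$ gives
\[
\sim_{\mathrm{WH}}\, = (q\times q)^{-1}(=_Y),
\]
so it suffices to show that $(=_Y)$ is closed in the $\cg$-product $Y\times Y$. This is where the main (though small) obstacle lies: we need $q\times q$ to be a quotient map in $\cg$, which is \emph{not} automatic for products of quotient maps in $\mathsf{Top}$, but \emph{does} hold in $\cg$ because $\cg$ is Cartesian closed and therefore the functor $-\times Y$ (and symmetrically $X\times -$) preserves colimits, in particular coequalizers. Applying this twice, $q\times q$ is the composite of two quotient maps and hence a quotient map in $\cg$. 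A subset of $Y\times Y$ is therefore closed if and only if its preimage under $q\times q$ is closed in $X\times X$, and by construction this preimage is the closed set $\sim_{\mathrm{WH}}$. So the diagonal of $Y$ is closed, and by the characterisation recalled just after Definition \ref{def:hk-space}, $Y$ is weakly Hausdorff.
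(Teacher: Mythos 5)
Your proof is correct and follows essentially the same route as the paper's: reduce the weak Hausdorff property to closedness of the diagonal (via the characterisation after Definition \ref{def:hk-space}) and pull it back along $q\times q$, using that $q\times q$ is a quotient map in $\cg$. The only differences are that you derive the quotient-map property of $q\times q$ from Cartesian closedness of $\cg$ rather than citing \cite[Proposition 2.2]{mccord1969classifying}, and you additionally verify that the smallest closed equivalence relation exists --- both fine and slightly more self-contained.
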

\begin{proof}
    The WH quotient $X/\!\sim_{\mathrm{WH}}$ is CG by Lemma \ref{lemma:finalTopologyCG}. It remains to show that $X/\!\sim_{\mathrm{WH}}$ is also WH. By \cite[Proposition 2.3]{mccord1969classifying}, it suffices to verify that the diagonal is closed in $(X/\!\sim_{\mathrm{WH}}) \times (X/\!\sim_{\mathrm{WH}})$. Let $p: X \to X/\!\sim_{\mathrm{WH}}$ be the canonical projection. By \cite[Proposition 2.2]{mccord1969classifying}, the product $p \times p: X \times X \to (X/\!\sim_{\mathrm{WH}}) \times (X/\!\sim_{\mathrm{WH}})$ is a quotient map. Hence, the diagonal is closed in $(X/\!\sim_{\mathrm{WH}}) \times (X/\!\sim_{\mathrm{WH}})$ if, and only if, its preimage in $X\times X$ under $p \times p$ is closed. This preimage is given precisely by $\sim_{\mathrm{WH}}$ which is closed by definition.
\end{proof}

\begin{theorem}\label{theorem:CGWHexpIdeal}
    The category $\cgwh$ is an exponential ideal in $\cg$. This means that $\cgwh$ is a reflective subcategory of $\cg$ and for all CG spaces $X$ and CGWH spaces $Y$, the exponential $Y^X$ is a CGWH space. The left adjoint to the inclusion $\cgwh \hookrightarrow \cg$ is given by the WH quotient.
\end{theorem}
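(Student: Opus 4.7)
The plan is to unpack the claim into two separable pieces: first, that the WH quotient is a genuine reflector (giving the reflective subcategory), and second, that $Y^X$ is weakly Hausdorff whenever $Y$ is. I expect the first to be a clean universal-property argument, and the second to follow from Cartesian closedness and the product-diagonal characterisation of weak Hausdorffness.

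For reflectivity, I would take $X \in \cg$ and a continuous $f: X \to Z$ with $Z \in \cgwh$, and argue that $f$ descends uniquely along the canonical projection $p: X \to X/\!\sim_{\mathrm{WH}}$. The key observation is that $(f \times f)^{-1}(=_Z) \subseteq X \times X$ (product in $\cg$) is a closed equivalence relation on $X$, because $=_Z$ is closed in $Z \times Z$ by the CGWH assumption and the criterion from \cite[Proposition 2.3]{mccord1969classifying} cited above. Since $\sim_{\mathrm{WH}}$ is by definition the \emph{smallest} closed equivalence relation, we get $\sim_{\mathrm{WH}} \subseteq (f \times f)^{-1}(=_Z)$, so $f$ is constant on $\sim_{\mathrm{WH}}$-classes and hence factors uniquely as a set map through $X/\!\sim_{\mathrm{WH}}$. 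Continuity of the factorisation is immediate from the universal property of the quotient (final) topology, which is CG by Lemma \ref{lemma:finalTopologyCG}. This gives $X/\!\sim_{\mathrm{WH}} \dashv \iota$ where $\iota: \cgwh \hookrightarrow \cg$.

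For the exponential part, let $X \in \cg$ and $Y \in \cgwh$. Since $\cg$ is Cartesian closed, the evaluation $\mathrm{ev}: Y^X \times X \to Y$ is continuous, and so for each $x \in X$ the map $\mathrm{ev}_x := \mathrm{ev}(-, x): Y^X \to Y$ is continuous (it is the composite $Y^X \xrightarrow{(\mathrm{id}, x)} Y^X \times X \xrightarrow{\mathrm{ev}} Y$). Therefore $\mathrm{ev}_x \times \mathrm{ev}_x: Y^X \times Y^X \to Y \times Y$ is continuous (with the product in $\cg$ on both sides). I would then observe the set-theoretic identity
\[
(=_{Y^X}) \;=\; \bigcap_{x \in X} (\mathrm{ev}_x \times \mathrm{ev}_x)^{-1}(=_Y),
\]
since two continuous maps $X \to Y$ are equal iff they agree at every point. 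Because $Y$ is WH, $=_Y$ is closed in $Y \times Y$ by \cite[Proposition 2.3]{mccord1969classifying}, so each set on the right is closed in $Y^X \times Y^X$, and arbitrary intersections of closed sets are closed. Hence $=_{Y^X}$ is closed in $Y^X \times Y^X$, and applying the same proposition in the other direction shows $Y^X$ is WH. Together with the already-established fact that $Y^X$ is CG (Definition \ref{definition:compactOpenCGtopology}), this gives $Y^X \in \cgwh$.

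The main potential pitfall is being careful that ``closed'' and ``product'' are interpreted in $\cg$ rather than in $\mathsf{Top}$ throughout — the weak Hausdorff criterion via the closed diagonal is stated precisely for the $\cg$-product, so both directions of the argument (building the closed equivalence relation in step one, and building the closed diagonal in step two) are internally consistent. Beyond that, everything reduces to the universal property of quotients and to the continuity of evaluation furnished by Cartesian closedness, both of which are already in hand.
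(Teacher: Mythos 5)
Your proposal is correct and follows essentially the same route as the paper: the reflector is obtained by noting that the kernel relation of $f$ is a closed equivalence relation (so it contains $\sim_{\mathrm{WH}}$), and $Y^X$ is shown to be weakly Hausdorff by writing its diagonal as the intersection over $x\in X$ of the closed preimages of $=_Y$ under the continuous evaluation maps. Your version merely makes explicit a few steps the paper leaves implicit (e.g.\ writing the kernel relation as $(f\times f)^{-1}(=_Z)$ and invoking the closed-diagonal criterion in both directions).
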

\begin{proof}
    First, we verify that the WH quotient is indeed left adjoint to the inclusion $\cgwh \hookrightarrow \cg$. Let $X$ be a CG space, let $Y$ be a CGWH space and let $f: X \to Y$ be a continuous map. We need to show that $f$ factors uniquely over the canonical projection $p: X\to X/\!\sim_{\mathrm{WH}}$ to the WH quotient. By the universal property of the quotient, it suffices to show that for all $x,y \in X$ with $x \sim_{\mathrm{WH}} y$, we have $f(x)=f(y)$. But this follows from the fact that the equivalence relation, 
        $$ s \sim t \;\;:\Leftrightarrow \;\; f(s)=f(t), $$
    is closed in $X\times X$ (since $Y$ is WH and $f$ is continuous), together with the definition of $\sim_{\mathrm{WH}}$ as the smallest closed equivalence relation on $X$. This completes the proof that the WH quotient is left adjoint to the inclusion $\cgwh \hookrightarrow \cg$.

    Finally, let $X$ be a CG space and let $Y$ be a CGWH space. It remains to show that $Y^X$ is WH. The diagonal of $Y^X$ is given by, 
        $$ (\,=_{Y^X}) \;= \:\{(f, g) \in Y^X \times Y^X \,|\: \forall x \in X, \: f(x)=g(x)\}. $$
    Since point evaluation is continuous and $Y$ is WH, this is an intersection of closed sets, which is hence closed. 
\end{proof}

\begin{corollary}
    The category $\cgwh$ is Cartesian closed and bicomplete. Limits are formed as in $\cg$ (see Lemma \ref{lem:LimitsInCG}) and colimits are formed as the WH quotient of colimits in $\cg$ (see Corollary \ref{corollary:colimitsInCG}).
\end{corollary}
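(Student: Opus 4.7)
The plan is to deduce both Cartesian closedness and bicompleteness from Theorem \ref{theorem:CGWHexpIdeal} together with the fact that $\cg$ itself is already Cartesian closed and bicomplete (Corollary \ref{corollary:colimitsInCG}), together with general facts about reflective subcategories.

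For bicompleteness, I would handle limits and colimits separately. For colimits, the statement is a textbook fact about reflective subcategories: since $\cgwh \hookrightarrow \cg$ has a left adjoint (the WH quotient, by Theorem \ref{theorem:CGWHexpIdeal}), $\cgwh$ has all colimits that $\cg$ has, computed by first forming the colimit in $\cg$ and then applying the reflector. For limits, the cleanest approach is to show directly that whenever $X_\bullet : I \to \cgwh$ is a diagram, its limit $X$ in $\cg$ (as constructed in Lemma \ref{lem:LimitsInCG}) is automatically WH, so that $X$ also serves as a limit in $\cgwh$. Using the diagonal characterisation of the WH property mentioned after Definition \ref{def:hk-space}, this boils down to observing that the diagonal of $X$ is the intersection
\[
(=_X) \;=\; \bigcap_{i \in I} (f_i \times f_i)^{-1}(=_{X_i})
\]
over the cone maps $f_i : X \to X_i$, each term of which is closed in $X \times X$ because each $X_i$ is WH and each $f_i$ is continuous; intersections of closed sets being closed, we are done.

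For Cartesian closedness, note that by the preceding paragraph the product of two CGWH spaces computed in $\cg$ is again CGWH, so it also serves as the product in $\cgwh$. Exponentials are handled by Theorem \ref{theorem:CGWHexpIdeal}: for CGWH spaces $X,Y$, the $\cg$-exponential $Y^X$ is CGWH, and one checks the universal property of the exponential in $\cgwh$ immediately transfers from that in $\cg$, since the adjunction isomorphism $\cg(Z \times X, Y) \cong \cg(Z, Y^X)$ restricts to the full subcategory $\cgwh$.

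I do not expect a substantive obstacle here; the only mildly non-trivial content is the closed-diagonal argument for limits, and even that is a direct analogue of the proof that products of Hausdorff spaces are Hausdorff. The remainder is formal manipulation using the reflective-subcategory structure established in Theorem \ref{theorem:CGWHexpIdeal}.
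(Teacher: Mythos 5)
Your proposal is correct, and it is essentially the deduction the paper intends (the corollary is stated without proof as a formal consequence of Theorem \ref{theorem:CGWHexpIdeal} and the bicompleteness and Cartesian closedness of $\cg$). The colimit and exponential parts are exactly the standard reflective-subcategory and exponential-ideal arguments. The one place where you add concrete content is the limit part: rather than invoking the general fact that a full reflective subcategory is closed under limits computed in the ambient category, you verify directly that the $\cg$-limit of a diagram of CGWH spaces is weakly Hausdorff via the closed-diagonal criterion, using that $(=_X)=\bigcap_{i}(f_i\times f_i)^{-1}(=_{X_i})$ holds at the level of underlying sets for a limit cone and that each term is closed. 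Both routes are valid; yours is more self-contained and mirrors the paper's own diagonal argument for $Y^X$ in the proof of Theorem \ref{theorem:CGWHexpIdeal}, while the purely formal route requires no topology at all. No gaps.
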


\subsection{Examples of CGWH spaces}\label{sec:Examples}

As we have seen, CGWH spaces are closed under a rich variety of constructions. Complementing this, we give two large classes of examples of CGWH spaces. First, note that compact Hausdorff spaces are trivially CGWH. Moreover: 

\begin{example}
    Closed subspaces, as well as open subspaces of CGWH spaces are again CGWH spaces in the subspace topology \cite[Lemma 2.26]{strickland2009category}.
\end{example}

Since a locally compact Hausdorff space is open in its one-point compactification, we obtain a further class of examples.

\begin{example}
    Locally compact Hausdorff spaces are CGWH spaces.
\end{example}

Recall that a topological space $X$ is said to be \emph{sequential} if every map $f:X \to Y$ to some further topological space $Y$ which is sequentially continuous (i.e.~it sends convergent sequences to convergent sequences) is continuous. Using the observation that convergent sequences can be identified with continuous from $\mathbb{N}\cup \{\infty\}$ (the one-point compactification of the natural numbers), one finds another large class of examples \cite[Proposition 1.6]{strickland2009category}.

\begin{example}
    Every weakly Hausdorff sequential topological space is a CGWH space.
\end{example}

This includes in particular every metrisable space. A further class of weakly Hausdorff sequential spaces, given by weakly Hausdorff \emph{QCB spaces}, will be introduced in Section \ref{sec:QCBspaces} below. These spaces form a Cartesian closed category subcategory of $\cgwh$ particularly well-suited for our cause and deserve a separate treatment.

\subsection{QCB spaces}\label{sec:QCBspaces} QCB spaces were first considered in the context of domain theory and computable analysis  \cite{menni2002topological,schroder2002extended,battenfeld2007convenient}. These spaces find a striking balance between offering a rich categorical structure while simultaneously being amenable to the type of countability arguments often required in measure-theoretic contexts.

\begin{definition}\label{definition:QCBspaces}
    A topological space $X$ is a \emph{QCB space} (short for ``quotient of a countably based space'') if there exists a second-countable space $Y$ together with a quotient map $Y\twoheadrightarrow X$.
    The category of weakly Hausdorff QCB spaces will be denoted by $\mathsf{QCB}_{h}$.
\end{definition}

The following important fact can be found in \cite[Corollary 7.3, Remark 7.4]{escardo2004comparing}, using that CGWH spaces form an exponential ideal in the category of all compactly generated spaces \cite[Proposition 8.10]{rezk2017compactly}.

\begin{theorem}\label{theorem:QCBCartesianClosed}
    The category $\mathsf{QCB}_{h}$ is Cartesian closed and has all countable limits and colimits. These limits and colimits are formed as in $\cgwh$, and the same is true for the exponential objects in $\mathsf{QCB}_{h}$ (i.e. spaces of continuous maps). 
\end{theorem}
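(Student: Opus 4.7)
The plan is to verify each claim by showing that the relevant construction, when carried out in $\cgwh$, always lands in $\mathsf{QCB}_{h}$: that is, closure of $\mathsf{QCB}_{h}\subseteq\cgwh$ under countable coproducts, coequalizers, countable products, equalizers, and exponentials. Cartesian closedness of $\mathsf{QCB}_{h}$ then follows formally from Cartesian closedness of $\cgwh$ (Theorem \ref{theorem:CGWHexpIdeal}), since the inclusion will preserve the relevant structure.

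First I would dispatch the colimit side, which is the easy direction. Given countably many QCB spaces $X_n$ with presenting quotients $q_n: Y_n \twoheadrightarrow X_n$ from second-countable $Y_n$, the coproduct $\coprod_n Y_n$ is again second-countable and the induced map onto $\coprod_n X_n$ is still a (WH-)quotient; composing with the WH-reflection if needed keeps us in $\mathsf{QCB}_{h}$. Coequalizers in $\cgwh$ of parallel pairs between QCB spaces are constructed as iterated quotients, and the composition of two quotient maps is a quotient, so these remain QCB.

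For countable limits, I would first handle products. If $X_n = Y_n/\!\sim_n$ as above, then $\prod_n Y_n$ is second-countable, so it suffices to show that the product map $\prod_n q_n$ is a quotient in $\cgwh$. In a Cartesian closed category, the functor $(-)\times Z$ preserves all colimits, hence in particular regular epimorphisms; this handles finite products, and the countable case is obtained by taking limits over the finite stages. For equalizers of $f,g: X \to Y$ with $Y$ weakly Hausdorff, the underlying set is closed in $X$ (as the preimage of the diagonal), so the claim reduces to the fact that a closed subspace $A \subseteq X$ of a QCB space is QCB: indeed, $q^{-1}(A) \subseteq Y$ is closed in the second-countable space $Y$, hence second-countable, and surjects onto $A$ via the restriction of $q$, which remains a quotient map (as subspaces interact well with quotients along closed saturated sets in the CGWH setting).

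The main obstacle is closure under exponentials: for QCB spaces $X,Y$, one must show that $Y^X$, computed in $\cgwh$, is again QCB. This is the substantial part of the theorem and is exactly the content of \cite[Corollary 7.3]{escardo2004comparing}. Rather than reproving it, I would invoke that result after verifying that the exponential in $\cgwh$ coincides with the exponential in $\mathsf{CG}$ on CGWH inputs (Theorem \ref{theorem:CGWHexpIdeal}), so that Escardo, Lawson and Simpson's analysis in the slightly larger setting of compactly generated spaces applies verbatim. The underlying idea there is to exhibit a ``universal'' QCB space (e.g.\ Baire space $\mathbb{N}^{\mathbb{N}}$) through which every QCB space can be presented, and use Cartesian closedness of $\cgwh$ to transport quotient presentations across the exponential. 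Combining this with the items above gives the theorem.
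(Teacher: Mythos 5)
The paper offers no proof of this theorem at all: it simply cites \cite[Corollary 7.3, Remark 7.4]{escardo2004comparing} together with the fact that CGWH spaces form an exponential ideal in $\cg$. Your proposal follows the same route for the part you correctly identify as substantial (closure under exponentials), and your supplementary arguments for countable coproducts, coequalizers, finite products and equalizers are essentially sound. In particular, your direct argument that a closed subspace $A$ of a QCB space is QCB (restrict the presenting quotient to the closed, saturated set $q^{-1}(A)$, which is second countable) is correct and is the right way to handle equalizers here, since the paper's own Lemma \ref{lemma:closedSubspacesQCB} is derived \emph{from} this theorem and so cannot be used as input.

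There is, however, one genuine gap: the countable product case. You assert that it ``is obtained by taking limits over the finite stages,'' but this does not work. Writing $\prod_{n\in\mathbb{N}} X_n$ as the inverse limit of the finite products $\prod_{n\le N} X_n$ does not exhibit it as a quotient of the second-countable space $\prod_{n\in\mathbb{N}} Y_n$: being a topological quotient of a second-countable space is not a property preserved under inverse limits of towers, and the Cartesian-closedness argument (that $(-)\times Z$ preserves coequalizers, hence products of \emph{two} quotient maps are quotient maps) only reaches finite products. Closure of QCB spaces under countably infinite products is one of the genuinely non-formal facts established in \cite{escardo2004comparing} (via the characterisation of QCB spaces by countable pseudobases); it is not a formal consequence of the finite case. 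Since you invoke their Corollary 7.3 anyway, the honest fix is to note that it covers countable limits as well as exponentials; as written, your sketch presents the countable-product closure as elementary when it is not.
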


In addition, QCB spaces share many of their measure-theoretically desirable countability properties with Polish spaces \cite[Theorem 4.5, Propositions 4.7 and 4.8]{battenfeld2007convenient}.

\begin{proposition}\label{proposition:propertiesOfQCBspaces}
    Every QCB space $X$ is 
    \begin{enumerate}\itemsep-.1em
        \item sequential, 
        \item hereditarily separable, i.e.~every subset of $X$ is separable (in the subspace topology),
        \item and hereditarily Lindelöf, i.e.~for each subset $S\subseteq X$, every cover of $S$ has a countable subcover.
    \end{enumerate}
\end{proposition}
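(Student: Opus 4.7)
The plan is to use the defining property of a QCB space, namely that $X$ admits a quotient map $q: Y \twoheadrightarrow X$ from a second-countable space $Y$. Each of the three countability properties holds for $Y$ itself (second-countable spaces are first-countable hence sequential; every subspace of a second-countable space is again second-countable and therefore separable and Lindelöf), so the strategy is to transfer these properties through $q$. For (2) and (3), given a subset $S \subseteq X$, we work with its preimage $q^{-1}(S) \subseteq Y$, which inherits second-countability from $Y$.

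For sequentiality, I would prove the general lemma that a quotient of a sequential space is sequential. Concretely, let $A \subseteq X$ be sequentially closed; it suffices to show $q^{-1}(A)$ is closed in $Y$, as then $A$ is closed in $X$ by the quotient property. If $(y_n)$ is a sequence in $q^{-1}(A)$ converging to $y \in Y$, then by continuity $q(y_n) \to q(y)$ in $X$; since each $q(y_n) \in A$ and $A$ is sequentially closed, $q(y) \in A$, whence $y \in q^{-1}(A)$. Thus $q^{-1}(A)$ is sequentially closed in $Y$, and since $Y$ is second-countable hence first-countable hence sequential, $q^{-1}(A)$ is closed.

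For hereditary separability, fix $S \subseteq X$. Since $Y$ is second-countable, so is $q^{-1}(S)$ in the subspace topology, hence it contains a countable dense subset $D$. I claim $q(D)$ is countable, contained in $S$ (because $D \subseteq q^{-1}(S)$), and dense in $S$: given any nonempty relatively open $V \subseteq S$, write $V = U \cap S$ with $U \subseteq X$ open; then $q^{-1}(V) = q^{-1}(U) \cap q^{-1}(S)$ is a nonempty open subset of $q^{-1}(S)$ and therefore meets $D$, so $V$ meets $q(D)$. Hereditary Lindelöfness is dual: given a cover of $S \subseteq X$ by relatively open sets $\{U_i \cap S\}_{i\in I}$, the sets $\{q^{-1}(U_i) \cap q^{-1}(S)\}_{i\in I}$ form an open cover of the second-countable (hence hereditarily Lindelöf) subspace $q^{-1}(S)$; extracting a countable subcover indexed by $I' \subseteq I$ and applying $q$ yields a countable subcover $\{U_i \cap S\}_{i \in I'}$ of $S$, since surjectivity of $q$ ensures that every point of $S$ has a preimage in $q^{-1}(S)$ which is covered.

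There is no real obstacle—all three arguments are short and routine once the quotient presentation $q: Y \twoheadrightarrow X$ is in hand. The mildly delicate point is the sequentiality argument, where one must use that $q$ is a \emph{quotient} (not merely continuous surjective) map in order to conclude closedness of $A$ from closedness of $q^{-1}(A)$; and one should note that in (2) and (3) we never need $q$ restricted to $q^{-1}(S)$ to be a quotient map onto $S$, only the much weaker facts that it is continuous and surjective.
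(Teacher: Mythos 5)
Your proof is correct. Note, though, that the paper does not actually prove this proposition at all---it is quoted from the literature (Battenfeld--Schr\"oder--Simpson, \emph{A convenient category of domains}), so there is no in-paper argument to compare against; what you have written is a complete, self-contained replacement for that citation. All three steps are the standard ones: sequentiality is transferred along the quotient map (and you correctly isolate the one place where the quotient property, rather than mere continuous surjectivity, is indispensable, namely deducing closedness of $A$ from closedness of $q^{-1}(A)$); hereditary separability and the hereditary Lindel\"of property are transferred along the continuous surjection $q^{-1}(S)\twoheadrightarrow S$ from the second-countable subspace $q^{-1}(S)$. The only cosmetic remark is that in item (3) the word ``cover'' must of course be read as ``open cover'' (or cover by relatively open sets), as you implicitly do; with that reading, every step checks out.
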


Hence, we may view (weakly Hausdorff) QCB spaces as a natural Cartesian closed generalisation of Polish spaces. \par 
A particular consequence of weakly Hausdorff QCB spaces being closed under countable limits in $\cgwh$ is the following.

\begin{lemma}\label{lemma:closedSubspacesQCB}
    Closed subspaces of weakly Hausdorff QCB spaces are QCB spaces (in the subspace topology). 
\end{lemma}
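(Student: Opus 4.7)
The plan is to realize $A$ as a pullback in $\cgwh$ of objects of $\mathsf{QCB}_h$, and then invoke Theorem \ref{theorem:QCBCartesianClosed}, which tells us $\mathsf{QCB}_h$ is closed under countable (in particular, finite) limits in $\cgwh$.

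I would first form the quotient $X \twoheadrightarrow X/A$ obtained by collapsing $A$ to a single point $[A]$, and check that $X/A \in \mathsf{QCB}_h$. The associated equivalence relation $R = \Delta_X \cup (A \times A)$ is closed in $X \times X$: the diagonal $\Delta_X$ is closed because $X$ is weakly Hausdorff, and $A \times A$ is closed because $A$ is closed in $X$. Hence $X/A$ is CGWH, and its CG-quotient topology agrees with the usual topological quotient. Moreover, $X/A$ is QCB: composing a quotient map $q\colon Y \twoheadrightarrow X$ (with $Y$ second countable, existing since $X \in \mathsf{QCB}_h$) with the canonical projection $X \twoheadrightarrow X/A$ yields a quotient map from a second countable space onto $X/A$.

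Now $A$ is (up to homeomorphism) the pullback of the cospan $\{[A]\} \hookrightarrow X/A \leftarrow X$ formed in $\cgwh$. As a finite limit of objects of $\mathsf{QCB}_h$, this pullback lies in $\mathsf{QCB}_h$ by Theorem \ref{theorem:QCBCartesianClosed}. The remaining subtlety, which I expect is the main delicate point, is identifying this pullback topologically with $A$ equipped with the subspace topology inherited from $X$: by Lemma \ref{lem:LimitsInCG} the pullback carries the initial CG-topology along its projection to $X$, whereas $A$ comes equipped with the ordinary subspace topology. These coincide here because $A$ is closed in the CGWH space $X$, so its subspace topology is already a CG-topology \cite[Lemma 2.26]{strickland2009category} and no further $k$-ification is required.
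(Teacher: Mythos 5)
Your proof is correct and takes essentially the same route as the paper's: the paper realizes $A$ as the equaliser of the projection $p\colon X \to X/A$ and the constant map at $[A]$, which is the same finite limit as your pullback of $\{[A]\} \hookrightarrow X/A \leftarrow X$, and then likewise concludes via Theorem \ref{theorem:QCBCartesianClosed}. The extra verifications you supply --- that $X/A$ is a weakly Hausdorff QCB space, and that the limit topology coincides with the subspace topology on the closed set $A$ --- are points the paper leaves implicit or delegates to \cite[Corollary 2.21]{strickland2009category}, so they are welcome but not a different argument.
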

\begin{proof}
    Let $X$ be a weakly Hausdorff QCB space and let $A\subseteq X$ be a closed subset. Then $A$ is the equaliser of the canonical projection,
        $$ p: X \to X/A, $$
    and the canonical map, 
        $$ *: X \to X/A, \;\; x \mapsto [a], $$
    sending every point in $X$ to the unique equivalence class of any point $a\in A$. Since equalisers are limits and both $X$ and $X/A$ are weakly Hausdorff QCB spaces, where we use use the fact that $A$ is closed to conclude that $X/A$ is weakly Hausdorff (see \cite[Corollary 2.21]{strickland2009category}), the claim follows from Theorem \ref{theorem:QCBCartesianClosed}.
\end{proof}

\subsection{Convergent sequences in mapping spaces}

We will use the following fact concerning the topology of the mapping spaces $Y^X$ to relate the Riesz monad to the Radon monad on compact Hausdorff spaces; see Section \ref{sec:relationToRadonMonad}. 

\begin{lemma}\label{lemma:YtoTheXforYmetricSpace}
    Let $Y$ be a metric space and let $X$ be a compact Hausdorff space. Then $Y^X$ is metrisable and carries the uniform topology. 
\end{lemma}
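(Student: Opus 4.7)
My strategy is to identify the compact-open CG-topology on $Y^X$ with the uniform topology induced by the sup metric $d_\infty(f, g) := \sup_{x \in X} d_Y(f(x), g(x))$, which is finite because $X$ is compact and $f, g$ are continuous. Metrisability then follows immediately, and the only thing left to check is that the $k$-ification step in Definition \ref{definition:compactOpenCGtopology} is trivial.

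First, I would verify that the topology $\tau$ generated by the sets $W(K, g, U)$ of Definition \ref{definition:compactOpenCGtopology} already coincides with the classical compact-open topology on $C(X, Y)$. The continuous image $g(K) \subseteq X$ is a compact subset, so $W(K, g, U) = \{f : f(g(K)) \subseteq U\}$ is a subbasic set in the classical compact-open topology; conversely, every compact $A \subseteq X$ is itself compact Hausdorff (as $X$ is) and arises as $g(K)$ with $K = A$ and $g$ the inclusion. Then I would invoke the standard theorem of general topology that, for $X$ compact Hausdorff and $(Y, d_Y)$ metric, the classical compact-open topology on $C(X, Y)$ coincides with the uniform topology: one direction uses uniform continuity of $f : X \to Y$ to cover $f(X)$ by finitely many small balls and show that an $\varepsilon$-ball in $d_\infty$ around $f$ contains a finite intersection of sets $W(A_i, \iota, B_{\varepsilon/3}(f(x_i)))$; the other uses compactness of $f(A)$ to thicken a given open $U \supseteq f(A)$ into a uniform $\varepsilon$-neighbourhood.

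Finally, the uniform topology is metrisable, hence sequential and weakly Hausdorff, hence CGWH (see Section \ref{sec:Examples}). This means $\tau$ is already a CG topology and so $k(\tau) = \tau$. Consequently $Y^X = (C(X, Y), k(\tau))$ carries exactly the uniform topology, proving both metrisability and the claim. The main obstacle is the classical compact-open = uniform identification, which is essentially a compactness-plus-uniform-continuity argument; once it is in hand, the passage from $\tau$ to $k(\tau)$ is automatic, so no genuine difficulty arises from working inside $\cgwh$ as opposed to $\mathsf{Top}$.
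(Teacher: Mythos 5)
Your proposal is correct and follows essentially the same route as the paper: identify the compact-open CG-topology with the $k$-ification of the classical compact-open topology (which the paper records as a remark after Definition \ref{def:hk-space} and you re-derive for compact Hausdorff $X$), invoke the standard fact that the compact-open topology equals the uniform topology when $X$ is compact, and observe that a metrisable topology is already CG so the $k$-ification is trivial. The paper's proof is just a terser version of the same argument.
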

\begin{proof}
    By definition, the topology on $Y^X$ is given by the $k$-ification of the compact-open topology (see Definition \ref{definition:compactOpenCGtopology}). When $X$ is compact, the latter agrees with the uniform topology, which is metrisable and hence agrees with its $k$-ification.
\end{proof}

The next lemma will be needed in order to understand the convergent sequences in the CGWH space $C_b(X)$; see Lemma \ref{lemma:ConvergentSequencesInCb}.

\begin{lemma}\label{lemma:ConvergenceInYtoTheX}
    Let $Y$ be a metric space and let $X$ be any CGWH space. Then a sequence $(f_n)$ converges in $C(X,Y)=Y^X$ if, and only if, it converges uniformly on compact subsets of $X$.
\end{lemma}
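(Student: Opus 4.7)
The plan is to eliminate the $k$-ification from the topology on $Y^X$ at the level of sequential convergence, and then match plain compact-open convergence against uniform convergence on compact subsets by reducing to Lemma \ref{lemma:YtoTheXforYmetricSpace}. Writing $\tau$ for the compact-open topology on $C(X,Y)$ (before $k$-ification), my first observation is that a sequence converges in $k(\tau)$ if, and only if, it converges in $\tau$. The forward direction is immediate from $\tau\subseteq k(\tau)$. For the converse, $f_n\to f$ in $\tau$ is equivalent to continuity of the map $\mathbb{N}\cup\{\infty\}\to(C(X,Y),\tau)$ sending $n\mapsto f_n$ and $\infty\mapsto f$; since $\mathbb{N}\cup\{\infty\}$ is compact Hausdorff, Lemma \ref{lemma:kOperatorIdempotent} upgrades this to continuity into $(C(X,Y),k(\tau))=Y^X$, yielding convergence in $Y^X$.

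It then remains to compare $\tau$-convergence with uniform convergence on compact subsets. For the forward direction, let $K'\subseteq X$ be compact; weak Hausdorffness of $X$ makes $K'$ compact Hausdorff, and the restriction map $r:C(X,Y)\to C(K',Y)$ is continuous for the compact-open topologies since, for the inclusion $\iota:K'\hookrightarrow X$, we have $r^{-1}(W(K,g,U))=W(K,\iota\circ g,U)$. Thus $f_n|_{K'}\to f|_{K'}$ in the compact-open topology on $C(K',Y)$, which by (the proof of) Lemma \ref{lemma:YtoTheXforYmetricSpace} coincides with the uniform topology. Conversely, assume $f_n\to f$ uniformly on compact subsets. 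A subbasic $\tau$-neighbourhood of $f$ has the form $W(K,g,U)$ with $f(g(K))\subseteq U$, and $g(K)\subseteq X$ is compact. Since $f(g(K))$ is compact while $Y\setminus U$ is closed in the metric space $Y$ (the case $U=Y$ being trivial), the two sets are separated by some $\varepsilon>0$; uniform $\varepsilon$-approximation of $f$ by $f_n$ on $g(K)$ then forces $f_n(g(K))\subseteq U$ eventually, so $f_n\in W(K,g,U)$ for large $n$.

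The main subtlety I expect to navigate is the bookkeeping between the compact-open topology and its $k$-ification; once one observes that sequential convergence is invariant under $k$-ification via the one-point compactification trick, the remainder is a routine reduction to the compact-Hausdorff-domain case already handled by Lemma \ref{lemma:YtoTheXforYmetricSpace}.
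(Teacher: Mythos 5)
Your proof is correct and follows essentially the same route as the paper's: both first observe that sequential convergence is unchanged by $k$-ification via the one-point-compactification trick, and then identify compact-open convergence with uniform convergence on compact subsets. The only difference is that the paper cites the latter identification as a standard fact, whereas you prove its sequential form in detail (via restriction to compact Hausdorff subspaces together with Lemma \ref{lemma:YtoTheXforYmetricSpace} for one direction, and an $\varepsilon$-separation argument for the other), which is a harmless and accurate elaboration.
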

\begin{proof}
    The convergent sequences in a topological space agree with those of its $k$-ification. (This can be seen by identifying convergent sequences with maps from the one-point compactification of the natural numbers.) The topology on $Y^X$ is given by the $k$-ification of the compact-open topology. This is the topology of uniform convergence on compact subsets, which yields the claim.
\end{proof}

\section{Linear CGWH spaces}

Subsequently, we will exclusively consider vector spaces over either the real or complex numbers, which we generically denote by $\mathbb{K}$.
If $X$ is a CGWH space, we denote by $C(X)= C(X, \mathbb{K})= \mathbb{K}^X$ the space of continuous maps into $\mathbb{K}$. 
The space $C(X)$ is a prime example of a \emph{linear CGWH space}.

\begin{definition}\label{definition:LinearCGWHspace}
    A linear CGWH space is a CGWH space $V$ together with two continuous maps (addition and scalar multiplication),
        $$ +: V \times V \to V,$$
        $$ \cdot: \mathbb{K}\times V \to V,$$
    such that $V$ forms a vector space with respect to the operations $\cdot$ and $+$. (As usual, the products are taken in $\cgwh$.) 
\end{definition}

\begin{warning}
    CGWH spaces are, by definition, topological spaces and the definition of a linear CGWH space is very similar to that of a topological vector space. \emph{However,} with respect to the same topology and vector space structure, a linear CGWH space is \emph{not} necessarily a topological vector space \cite[Example 2.2.9]{peterseim2024monadic}. Continuity of addition is only required with respect to the product $\times$ in $\cgwh$, which is in general a weaker requirement than continuity with respect to $\times_{\mathsf{Top}}$. In the other direction, a Hausdorff topological vector space is also not necessarily a linear CGWH space with respect to the same topology and vector space structure, as exemplified by the weak-$*$ dual of an infinite dimensional Hilbert space \cite[Proposition 1]{frolicher1972topologies}. 
\end{warning}

What does hold is that Hausdorff topological vector spaces always become linear CGWH spaces when equipped with the $k$-ification of their topology. In particular, every Hausdorff topological vector space whose underlying topological space is already a CGWH space is a linear CGWH space with respect to the same topology. Importantly, this includes all metrisable topological vector spaces, such as Fréchet and Banach spaces.

\subsection{\texorpdfstring{$C_b(X)$}{Cb(X)} as a linear CGWH space}\label{section:CbAsLinearCGWHspace}

Let $X$ be a CGWH space. We now describe the linear CGWH space structure on the space $C_b(X)$ of continuous bounded ($\mathbb{K}$-valued) functions on $X$. While the Banach space topology on $C_b(X)$ does yield such structure, this is \emph{not} the topology on $C_b(X)$ which we will consider, as it generally forgets too much information about the topology of $X$ to possibly be useful in our context. For instance, the dual of the \emph{Banach space} $C_b(X)$ can be identified with the space of certain \emph{finitely} additive set functions on $X$ (see \cite[Theorem 7.9.8]{bogachev2007measure}), whereas we are interested in (suitably regular) countably additive measures. \par 
Fortunately, there is a natural $\cgwh$ topology on $C_b(X)$ due to Cartesian closedness of this category.
To see this, note that, as a set, $C_b(X)$ is the directed union over functions uniformly bounded by some natural number, 
    $$C_b(X) = \bigcup_{n \in\mathbb{N}} (nD)^X, $$
where $D = \{\lambda\in \mathbb{K} \mid |\lambda| \leq 1\} \subseteq \mathbb{K}$ is the unit disk and $(nD)^X$ is the exponential in $\cgwh$. Interpreting this directed union as a sequential colimit leads to the following definition.

\begin{definition}\label{definition:CbCGWHSpace}
    We define the linear CGWH space $C_b(X)$ of bounded continuous ($\mathbb{K}$-valued) functions on a CGWH space $X$ as the filtered colimit (in the category of CGWH spaces),
        $$ C_b(X) := \mathrm{colim}_{n\in\mathbb{N}}\: (nD)^X, $$
    of the diagram,
        $$ D^X \hookrightarrow (2D)^X \hookrightarrow (3D)^X \hookrightarrow ... ,  $$
    where $D \subseteq \mathbb{K}$ is the unit disk.
\end{definition}

Theorem \ref{theorem:QCBCartesianClosed} now directly implies:

\begin{lemma}\label{lemma:CbQCB}
    Let $X$ be a weakly Hausdorff QCB space. Then $C_b(X)$ is a weakly Hausdorff QCB space as well.
\end{lemma}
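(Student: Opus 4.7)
The plan is to apply Theorem \ref{theorem:QCBCartesianClosed} twice, once for the exponentials and once for the countable colimit appearing in Definition \ref{definition:CbCGWHSpace}.

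First, I would verify that for each $n\in\mathbb{N}$, the scaled disk $nD = \{\lambda \in \mathbb{K}\mid |\lambda|\leq n\}$ is a weakly Hausdorff QCB space. Indeed, $nD$ is a compact metrisable subspace of $\mathbb{K}$, hence in particular second countable and (weakly) Hausdorff. Any second-countable space is a QCB space via the identity quotient map onto itself, so $nD \in \mathsf{QCB}_h$. Since $X$ is assumed to be in $\mathsf{QCB}_h$, Theorem \ref{theorem:QCBCartesianClosed} then tells us that the exponential $(nD)^X$, whether computed in $\cgwh$ or in $\mathsf{QCB}_h$, is a weakly Hausdorff QCB space.

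Next, the colimit defining $C_b(X)$ is the sequential (in particular, countable) colimit of the diagram
$$ D^X \hookrightarrow (2D)^X \hookrightarrow (3D)^X \hookrightarrow \cdots $$
which now lies entirely in $\mathsf{QCB}_h$ by the previous step. Theorem \ref{theorem:QCBCartesianClosed} guarantees that $\mathsf{QCB}_h$ has all countable colimits and that they are computed as in $\cgwh$. Applying this to the above diagram yields that the colimit, which is exactly $C_b(X)$, is a weakly Hausdorff QCB space, as claimed.

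The argument is essentially a direct invocation of the closure properties packaged in Theorem \ref{theorem:QCBCartesianClosed}, so there is no genuine obstacle. The only point meriting a brief justification is that each $nD$ is a QCB space, and this reduces immediately to its second countability.
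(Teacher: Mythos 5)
Your argument is correct and is exactly the route the paper takes: the paper states the lemma as a direct consequence of Theorem \ref{theorem:QCBCartesianClosed}, implicitly relying on the same two facts you spell out (each $(nD)^X$ lies in $\mathsf{QCB}_h$ because $nD$ is second countable, and the defining sequential colimit is a countable colimit computed as in $\cgwh$). Your version simply makes the two invocations of Theorem \ref{theorem:QCBCartesianClosed} explicit.
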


Another pleasant consequence of topologising $C_b(X)$ as a colimit over exponential objects is the following. 

\begin{lemma}\label{lemma:CbEnrichedFunctor}
    Let $X,Y$ be CGWH spaces. Then the pullback mapping, 
        $$ C(X,\, Y) \;\to \;C(C_b(Y),\: C_b(X)), \;\;\; f \mapsto (g \mapsto g \circ f), $$
    is continuous. In other words, $C_b$ is a $\cgwh$-enriched functor.
\end{lemma}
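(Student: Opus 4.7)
The plan is to exploit Cartesian closedness of $\cgwh$ together with the colimit description of $C_b$. By the product-exponential adjunction in $\cgwh$, continuity of the map $C(X,Y) \to C(C_b(Y),\, C_b(X))$ is equivalent to continuity of its exponential transpose, the ``uncurried'' composition map
$$ \mathrm{comp}: C(X,Y) \times C_b(Y) \to C_b(X), \qquad (f, g) \mapsto g \circ f. $$
So I would reduce to proving continuity of $\mathrm{comp}$.

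Since $\cgwh$ is Cartesian closed, the functor $C(X,Y) \times (-)$ admits a right adjoint, hence preserves all colimits. Combined with Definition \ref{definition:CbCGWHSpace}, this yields a canonical isomorphism
$$ C(X,Y) \times C_b(Y) \;\cong\; \mathrm{colim}_{n \in \mathbb{N}}\; \bigl(C(X,Y) \times (nD)^Y\bigr). $$
By the universal property of the colimit, producing the continuous map $\mathrm{comp}$ reduces to producing compatible continuous maps $C(X,Y) \times (nD)^Y \to C_b(X)$ for each $n$, realising the formula $(f,g) \mapsto g \circ f$.

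For fixed $n$, if $g$ takes values in $nD$, then so does $g \circ f$, so the formula factors through the canonical inclusion $(nD)^X \hookrightarrow C_b(X)$ as
$$ C(X,Y) \times (nD)^Y \;\longrightarrow\; (nD)^X \;\hookrightarrow\; C_b(X). $$
The first arrow is the standard internal composition morphism of $\cgwh$: it is the exponential transpose of the map $C(X,Y) \times (nD)^Y \times X \to nD$ sending $(f, g, x)$ to $g(f(x))$, which is continuous because it is obtained by composing the two evaluation maps $Y^X \times X \to Y$ and $(nD)^Y \times Y \to nD$, both continuous by Cartesian closedness. Compatibility with the colimit system—i.e.\ commutativity with the inclusions $(nD)^Y \hookrightarrow ((n+1)D)^Y$ and $(nD)^X \hookrightarrow ((n+1)D)^X$—is immediate from the set-theoretic definition of composition.

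No step here is a serious obstacle; the argument is essentially a bookkeeping exercise combining Cartesian closedness with the colimit definition of $C_b$. The mildest subtlety worth flagging is the use of commutation of binary products with filtered colimits in $\cgwh$, which is not automatic in an arbitrary topological setting but holds here as an instance of a left adjoint preserving colimits.
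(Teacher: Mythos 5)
Your proof is correct and follows essentially the same route as the paper's: uncurry via Cartesian closedness, reduce along the colimit defining $C_b(Y)$ to the maps $C(X,Y)\times (nD)^Y \to (nD)^X \hookrightarrow C_b(X)$, and conclude by continuity of internal composition. You are in fact slightly more explicit than the paper in justifying that $C(X,Y)\times(-)$ preserves the defining colimit (as a left adjoint), a step the paper leaves implicit.
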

\begin{proof}
    It suffices to show that the uncurried map, 
        $$ C(X, Y) \times C_b(Y) \to C_b(X), \;\; (f, g) \mapsto g \circ f, $$
    is continuous. Since $C_b(Y)$ is a sequential colimit over the spaces $C(Y, nD)$ (by definition), it further suffices to verify the continuity of 
        $$ C(X,Y) \times (nD)^Y \to C_b(X), $$
    for each $n\in \mathbb{N}$. This map in turn factors through the inclusion $C(X,nD) \hookrightarrow C_b(X)$,
    reducing our claim to the continuity of
        $$ C(X,Y) \times (nD)^Y \to (nD)^X, \qquad (n \in \mathbb{N})$$
    which follows from the Cartesian closedness of $\cgwh$.
\end{proof}

Next, we give a simple description of the convergent sequences in $C_b(X)$. When $X$ is a QCB space, this completely characterises the topology of $C_b(X)$, since in this case, $C_b(X)$ is a sequential space by Lemma \ref{lemma:CbQCB}.

\begin{lemma}\label{lemma:ConvergentSequencesInCb}
    Let $X$ be a CGWH space. Then a sequence $(f_n)$ of continuous bounded functions converges to $f$ in $C_b(X)$ if, and only if, $(f_n)$ is uniformly bounded and converges in the compact-open topology to $f$.
\end{lemma}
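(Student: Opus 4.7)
The plan is to exploit the colimit presentation $C_b(X) = \mathrm{colim}_{n\in \mathbb{N}} (nD)^X$ and handle the two directions of the equivalence separately.

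For the forward direction, suppose $f_n \to f$ in $C_b(X)$. I would split the conclusion into compact-open convergence and uniform boundedness. For compact-open convergence, the point is that each $(nD)^X$ maps continuously into $C(X,\mathbb{K})$ equipped with the ordinary (non-$k$-ified) compact-open topology, since the compact-open CG-topology on $(nD)^X$ refines the compact-open topology; the universal property of the colimit then yields a continuous map from $C_b(X)$ to $C(X,\mathbb{K})$ with the compact-open topology, so $f_n\to f$ in the compact-open topology. For uniform boundedness, I would note that $K = \{f_n : n \in \mathbb{N}\} \cup \{f\}$ is compact in $C_b(X)$ (as the continuous image of $\mathbb{N} \cup \{\infty\}$) and argue that $K \subseteq (ND)^X$ for some $N$, yielding $\sup_n \|f_n\|_\infty \leq N$ at once.

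For the backward direction, suppose $(f_n)$ is uniformly bounded by $N$ and converges in the compact-open topology to $f$. Pointwise convergence forces $\|f\|_\infty \leq N$, so $f_n$ and $f$ all lie in $(ND)^X$. Since $ND$ is a metric space, Lemma \ref{lemma:ConvergenceInYtoTheX} identifies convergence in $(ND)^X$ with uniform convergence on compact subsets of $X$, which for $\mathbb{K}$-valued maps coincides with convergence in the compact-open topology. Hence $f_n \to f$ in $(ND)^X$, and continuity of the inclusion $(ND)^X \hookrightarrow C_b(X)$ transfers this to $C_b(X)$.

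The hard part will be the ``compact subsets lie in a finite stage'' step in the forward direction. Here I would use that the transitions $(nD)^X \hookrightarrow ((n+1)D)^X$ are closed embeddings between $T_1$ (in fact CGWH) spaces, and argue directly: if no $(ND)^X$ contains $K$, one extracts $f_{n_k} \in K \setminus (kD)^X$; the set $S = \{f_{n_k}\}$ meets each $(mD)^X$ in a finite (hence closed, by $T_1$) set, so $S$ and each of its subsets is closed in the colimit topology, making $S$ an infinite closed discrete subset of the compact Hausdorff space $K$, which is absurd. One subtlety worth checking here is that this argument is carried out in the topological sequential colimit; the colimit in $\cgwh$ carries the same topology because sequential colimits of closed embeddings between CGWH spaces remain CGWH, so neither $k$-ification nor a WH-quotient intervenes.
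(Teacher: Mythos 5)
Your proof is correct and follows essentially the same route as the paper's: both rest on the colimit presentation $C_b(X)=\mathrm{colim}_n (nD)^X$, the fact that a compact subset (here, the image of $\mathbb{N}\cup\{\infty\}$) of such a sequential colimit along closed embeddings lies in a finite stage, and Lemma \ref{lemma:ConvergenceInYtoTheX} to translate convergence in $(nD)^X$ into compact-open convergence. The only difference is that the paper simply cites Strickland's Lemma 3.7 for the finite-stage fact, whereas you prove it inline via the closed-discrete-subset argument (correctly, including the check that the CGWH colimit agrees with the topological one).
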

\begin{proof}
     A convergent sequence in $C_b(X)$ can be identified with a continuous map 
        $$ \mathbb{N}\cup\{\infty\} \to C_b(X) $$
    from the one-point compactification of the natural numbers to $C_b(X)$. The image of such map being compact, \cite[Lemma 3.7]{strickland2009category} implies that every convergent sequence lies in one of the spaces $(nD)^X$ (for some $n\in \mathbb{N}$) and is hence uniformly bounded. Moreover, a sequence converges in $(nD)^X$ if, and only if, it converges in the compact-open topology (see Lemma \ref{lemma:ConvergenceInYtoTheX}).
\end{proof}

The following lemma will be needed later, to show that on a compact Hausdorff space, every Baire measure is $k$-regular; see Lemma \ref{lemma:radonMeasuresKregular}.

\begin{lemma}\label{lemma:continuityOnCb}
    Let $X$ be a CGWH space and let $\phi$ be a not-necessarily-continuous functional on $C_b(X)$. Assume that for every uniformly bounded net of continuous functions $(f_i)$ on $X$ which converges to $0$ in the compact-open topology, $\phi(f_i) \to 0$. Then $\phi$ is continuous.
\end{lemma}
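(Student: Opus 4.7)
The plan is to exploit the colimit description $C_b(X) = \mathrm{colim}_{n \in \mathbb{N}} (nD)^X$ from Definition \ref{definition:CbCGWHSpace}. By the universal property of this colimit in $\cgwh$, $\phi$ is continuous on $C_b(X)$ if and only if each restriction $\phi|_{(nD)^X}: (nD)^X \to \mathbb{K}$ is continuous. So I fix $n$ and aim to establish continuity of $\phi|_{(nD)^X}$.

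Since $(nD)^X$ is a CGWH space, this reduces further: it suffices to show that for every compact subset $A \subseteq (nD)^X$, the restriction $\phi|_A$ is continuous in the subspace topology. The key technical observation is that on such $A$, the subspace topology inherited from the compact-open CG-topology on $(nD)^X$ (see Definition \ref{definition:compactOpenCGtopology}) coincides with the subspace compact-open topology. Indeed, since $nD$ is Hausdorff, the compact-open topology on $(nD)^X$ is Hausdorff, and its $k$-ification is then Hausdorff as well (being finer); the continuous identity map from the $k$-ified subspace topology on $A$ (under which $A$ is compact by assumption) to the compact-open subspace topology (which is Hausdorff) is a continuous bijection from a compact space to a Hausdorff space, hence a homeomorphism.

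With this identification in hand, continuity of $\phi|_A$ is a net-convergence condition in the compact-open topology. Given a net $(f_i) \subseteq A$ with $f_i \to f$ in compact-open, the net $(f_i - f)$ is uniformly bounded by $2n$ (since $A \subseteq (nD)^X$) and converges to $0$ in the compact-open topology. By hypothesis, $\phi(f_i - f) \to 0$, and by linearity of $\phi$ (implicit in the use of ``functional''), $\phi(f_i) \to \phi(f)$. Combined with the previous reductions, this gives continuity of $\phi$ on $C_b(X)$.

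The main subtlety is the matching of the two subspace topologies on compact subsets $A$; once that point is established, the rest is a clean unwinding of the colimit and CG definitions together with the translation trick $f_i \mapsto f_i - f$ which converts the net-continuity-at-$f$ question into the hypothesised net-continuity-at-$0$.
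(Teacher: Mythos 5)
Your proof is correct and follows essentially the same route as the paper's: reduce via the colimit to each $(nD)^X$, and use the translation trick $f_i \mapsto f_i - f$ to turn the hypothesis into continuity with respect to the compact-open topology there. The only difference is that your detour through compact subsets $A$ (and the homeomorphism between the two subspace topologies on $A$) is unnecessary: since the $k$-ification is finer than the compact-open topology, the identity map $(nD)^X = kC_{c.o.}(X,nD) \to C_{c.o.}(X,nD)$ is continuous, so compact-open continuity of $\phi$ already implies its continuity on $(nD)^X$.
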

\begin{proof}
    By assumption, $\phi$ is continuous on each space of continuous functions $C_{c.o.}(X, nD)$ with the compact-open topology, implying that it is also continuous on each $(nD)^X=kC_{c.o.}(X, nD)$. The claim follows, since $C_b(X)$ is given as the colimit over these spaces. 
\end{proof}

\subsection{The natural dual of a linear CGWH space} The Cartesian closedness of $\cgwh$ suggests a canonical topology on the continuous dual space of a linear CGWH space. When equipped with this topology, we call it the \emph{natural dual}.

\begin{definition}\label{definition:naturalDual}
    Let $V$ be a linear CGWH space. The \emph{natural dual} of $V$, 
        $$ V^\wedge := \{f \in C(V) \mid f \:\mathrm{linear}\} \subseteq C(V), $$
    is the space of continuous linear functionals on $V$, topologised as a closed subspace of the space of continuous maps $C(V)$.
\end{definition}

As a direct consequence of Lemma \ref{lemma:closedSubspacesQCB}, we obtain:

\begin{lemma}\label{lemma:naturalDualQCB}
    If a linear CGWH space $V$ is a QCB space, then so is its natural dual $V^\wedge$.
\end{lemma}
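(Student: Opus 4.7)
The plan is to realise $V^\wedge$ as a closed subspace of a weakly Hausdorff QCB space, so that Lemma \ref{lemma:closedSubspacesQCB} immediately yields the conclusion. The natural ambient space to use is $C(V) = \mathbb{K}^V$, since $V^\wedge$ is by definition topologised as a subspace of $C(V)$.

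First, I would verify that $C(V) = \mathbb{K}^V$ is itself a weakly Hausdorff QCB space. The field $\mathbb{K}$ is second-countable and Hausdorff, hence a weakly Hausdorff QCB space, and $V$ is a weakly Hausdorff QCB space by assumption. By Theorem \ref{theorem:QCBCartesianClosed}, which tells us that $\mathsf{QCB}_h$ is Cartesian closed with exponentials computed in $\cgwh$, the exponential $\mathbb{K}^V$ is again a weakly Hausdorff QCB space.

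Next, I would argue that $V^\wedge$ is closed in $C(V)$. For any fixed $x \in V$, the evaluation functional $\mathrm{ev}_x : C(V) \to \mathbb{K}$ is continuous; indeed, it factors through the (continuous) evaluation map $C(V) \times V \to \mathbb{K}$ arising from Cartesian closedness via the continuous section $f \mapsto (f, x)$. Linearity of $f \in C(V)$ is the conjunction of the conditions $f(x+y) - f(x) - f(y) = 0$ and $f(\lambda x) - \lambda f(x) = 0$ indexed over all $(x,y) \in V \times V$ and $\lambda \in \mathbb{K}$. Each such condition cuts out a closed subset of $C(V)$, since it is the preimage of $\{0\} \subseteq \mathbb{K}$ under a continuous map built from evaluation functionals and the (continuous) addition and scalar multiplication on $\mathbb{K}$. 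Intersecting this (possibly uncountable) family of closed sets gives a closed set, which is precisely $V^\wedge$.

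Combining these two steps, $V^\wedge$ is a closed subspace of the weakly Hausdorff QCB space $C(V)$, and Lemma \ref{lemma:closedSubspacesQCB} finishes the proof. No step here should be a serious obstacle; the only mildly delicate point is being careful that the evaluation map on $C(V) = \mathbb{K}^V$ is a genuine $\cgwh$-morphism (so that fixed-point evaluations $\mathrm{ev}_x$ are continuous), which is exactly the defining property of the exponential object in $\cgwh$.
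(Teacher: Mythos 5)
Your proof is correct and is essentially the paper's own argument: the paper derives the lemma as a direct consequence of Lemma \ref{lemma:closedSubspacesQCB}, relying on the fact that $V^\wedge$ is by definition a closed subspace of $C(V)=\mathbb{K}^V$, which is a weakly Hausdorff QCB space by Theorem \ref{theorem:QCBCartesianClosed}. You merely spell out the details (in particular the closedness of the linearity conditions via continuity of evaluation) that the paper leaves implicit.
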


The next lemma will be needed to compare the topology on the space $\mc{M}(X)$ of $k$-regular measures (see Section \ref{sec:kRegularMeasures}) to the topology of weak convergence; see Theorem \ref{convergence_of_measures_on_compactum}.

\begin{lemma}\label{lemma:unitBallOfNaturalDualCarriesWeakStarTop}
    Let $V$ be a Banach space, considered as a linear CGWH space. Then, on the unit ball of $V^\wedge$, the topology of $V^\wedge$ coincides with the weak-$*$ topology.
\end{lemma}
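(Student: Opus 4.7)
The plan is to compare three topologies on the unit ball $B \subseteq V^\wedge$: the topology $\tau_1$ it inherits as a subspace of $V^\wedge$; the (pre-$k$-ification) compact-open topology $\tau_2 := \tau_{c.o.}|_B$ inherited as a subspace of $(C(V), \tau_{c.o.})$; and the weak-$*$ topology $\tau_w$. Since $k$-ification only refines a topology (Lemma \ref{lem:kificiation}) and the compact-open topology is finer than the topology of pointwise convergence, we automatically have $\tau_w \subseteq \tau_2 \subseteq \tau_1$, so the goal reduces to showing that these inclusions are equalities.

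For $\tau_2 = \tau_w$, I would invoke the classical \emph{equicontinuity principle}: on any equicontinuous subset of a mapping space $C(X, Y)$, with $Y$ a metric space, the compact-open topology coincides with the topology of pointwise convergence. Since every $\phi \in B$ satisfies $|\phi(v) - \phi(w)| \leq \|v - w\|$, the unit ball is uniformly equicontinuous and the principle applies directly.

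For $\tau_1 = \tau_2$, the Banach--Alaoglu theorem guarantees that $(B, \tau_w)$ is compact Hausdorff, and hence $(B, \tau_2)$ is compact Hausdorff by the previous paragraph; in particular it is a CG space. The inclusion $\iota \colon (B, \tau_2) \hookrightarrow (C(V), \tau_{c.o.})$ is trivially continuous, and since the domain is CG, $\iota$ is automatically continuous as a map into $(C(V), k(\tau_{c.o.})) = C(V)$ as well: this follows from Lemma \ref{lemma:kOperatorIdempotent}, since continuity out of a CG space can be tested on compact Hausdorff probe spaces, and on such probes continuity into $(C(V), \tau_{c.o.})$ and into $(C(V), k(\tau_{c.o.}))$ coincide. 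Because the image of $\iota$ lies in $V^\wedge$, this yields a continuous bijection $(B, \tau_2) \to (B, \tau_1)$, i.e.\ $\tau_1 \subseteq \tau_2$, closing the chain.

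The only nontrivial input is the equicontinuity principle; the remaining work is bookkeeping around how the $k$-ification interacts with a subspace that happens to be compact Hausdorff, which is the step warranting the most care.
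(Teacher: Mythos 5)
Your proposal is correct and follows essentially the same route as the paper: identify the weak-$*$ and compact-open topologies on the ball via equicontinuity of the unit ball (the paper cites this as a corollary of the uniform boundedness principle in Schaefer), then use Banach--Alaoglu to get compactness and conclude that the $k$-ification does not alter the topology there. Your third paragraph merely spells out in more detail the paper's one-line observation that on compact subsets the compact-open topology agrees with its $k$-ification.
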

\begin{proof}
    By a corollary of the uniform boundedness principle (see \cite[p.~86, 4.6]{schaefer1971topological}), the weak-$*$ topology on $V'$ coincides with the compact-open topology on the unit ball of $V'$. Since the unit ball is compact in the weak-$*$ topology, by the Banach-Alaoglu theorem, it is hence also compact in the compact-open topology. On compact subsets, the compact-open topology agrees with its $k$-ification, which is precisely the topology of $V^\wedge$. 
\end{proof}

\section{A Riesz representation theorem}

In this section, we show that the natural dual $C_b(X)^\wedge$ of the space of continuous bounded functions on a CGWH space $X$ can be identified with a certain class of measures, the \emph{$k$-regular Baire measures} on $X$. When $X$ is a QCB space, every Baire measure is $k$-regular and we obtain a particularly natural version of the Riesz representation theorem: the natural dual of $C_b(X)$ can be identified with the space of Baire measures on $X$ (see Corollary \ref{corollary:RieszForQCB}).

\begin{warning}
    In the following, the term ``measure'' will be reserved for countably additive $\mathbb{K}$-valued measures of bounded variation. We will not consider infinite measures and do not necessarily assume measures to be positive. In general, we will follow the measure-theoretic terminology of \cite{bogachev2007measure}. 
\end{warning}

\subsection{Baire, Borel and Radon measures}

Let $X$ be a topological space. We recall the definitions of Baire, Borel and Radon measures (which unfortunately are not quite uniform throughout the literature).

\begin{definition}\label{def:Baire}
    The \emph{Baire $\sigma$-algebra} $\mc{B}a(X)$ on $X$ is the $\sigma$-algebra generated by all continuous $\R$-valued functions on $X$. A \emph{Baire measure} is a $\mathbb{K}$-valued measure on the Baire $\sigma$-algebra. \par 
    A \emph{Borel measure} on $X$ is a measure on the Borel $\sigma$-algebra $\mc{B}(X)$ (which is the $\sigma$-algebra generated by all open sets). A \emph{Radon measure} on $X$ is a Borel measure $\mu$ on $X$ such that for each $A\in \mc{B}(X)$ and $\epsilon >0$, there is a compact subset $K \subseteq A$ such that $|\mu|(A \setminus K) < \epsilon$.
\end{definition}

When $X$ is sufficiently well-behaved, the notions of Baire and Radon measure essentially coincide:

\begin{theorem}\label{theorem:baireMeasuresVsRadonMeasures}
    On a Polish space, the Baire and Borel $\sigma$-algebras agree; moreover the notions of Baire, Borel, and Radon measure coincide.
    On a compact Hausdorff space, every Baire measure admits a unique extension to a Radon measure.
\end{theorem}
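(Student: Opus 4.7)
The plan is to treat the Polish and compact Hausdorff statements separately, in both cases by standard appeals to the measure-theoretic literature (most of which can be found in \cite{bogachev2007measure}). For a Polish space $X$ with a compatible metric $d$, the key observation is that every closed set $C \subseteq X$ is the zero set of the continuous function $x \mapsto d(x, C)$, hence a Baire set. Since the closed sets generate the Borel $\sigma$-algebra, this yields $\mathcal{B}(X) \subseteq \mathcal{B}a(X)$; the reverse inclusion is automatic (continuous $\mathbb{R}$-valued functions are Borel-measurable), so the two $\sigma$-algebras coincide. Consequently every Baire measure is a Borel measure and vice versa.

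To upgrade this to the Radon property on a Polish space, I would invoke Ulam's theorem, which guarantees that every finite Borel measure on a Polish space is tight, i.e.~inner regular on $X$ itself with respect to compact sets. Extending tightness on $X$ to the inner regularity condition on arbitrary Borel sets appearing in Definition \ref{def:Baire} is a routine approximation argument combined with outer regularity of finite Borel measures on metric spaces. For $\mathbb{K}$-valued measures the conclusion then follows by splitting into real and imaginary parts and applying the Jordan decomposition, since the total variation $|\mu|$ is a finite positive Borel (and hence Radon) measure.

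For the compact Hausdorff case, my plan is to use the classical Riesz representation theorem: every continuous linear functional on $C(X)$ is of the form $f \mapsto \int f \, d\nu$ for a unique Radon measure $\nu$. Given a Baire measure $\mu$, the functional $f \mapsto \int f \, d\mu$ on $C(X) = C_b(X)$ is continuous and linear, yielding a Radon measure $\tilde{\mu}$. To see that $\tilde{\mu}$ extends $\mu$, one uses that on a compact Hausdorff space every closed Baire set is a closed $G_\delta$, and indicators of such sets can be monotonically approximated by continuous functions; both measures assign the same integral to each continuous function, so they must agree on all Baire sets. Uniqueness of the Radon extension follows because a Radon measure on a compact Hausdorff space is determined by its integrals against $C(X)$.

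The main technical obstacle is less the mathematics than the bookkeeping: one must carefully track whether measures are positive or $\mathbb{K}$-valued, and distinguish between the several slightly different regularity properties used in various references. In practice the proof will reduce to locating and citing the precise statements in \cite{bogachev2007measure}, rather than a substantive new argument.
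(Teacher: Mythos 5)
Your proposal is correct, and in the end it lands where the paper does: the paper's entire proof consists of citing \cite[Theorem 7.1.7 and Theorem 7.3.4]{bogachev2007measure}, and your sketch is an accurate outline of the standard arguments packaged in those citations (zero sets of $d(\cdot,C)$ for Baire $=$ Borel, Ulam's theorem plus regularity of finite Borel measures on metric spaces, Jordan/total-variation reduction for $\mathbb{K}$-valued measures, and Riesz representation plus a $\pi$--$\lambda$ argument over compact $G_\delta$ sets for the compact case). The only step worth flagging as a nontrivial classical input rather than a routine one is your claim that every closed Baire set in a compact Hausdorff space is a $G_\delta$; it is true (compact Baire sets are $G_\delta$), but it deserves its own citation rather than being treated as obvious.
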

\begin{proof}
    For the statement about Polish spaces, see \cite[p. 70, Theorem 7.1.7]{bogachev2007measure}, for the one on compacta, see \cite[p. 81, Theorem 7.3.4]{bogachev2007measure}.
\end{proof}

\subsection{\texorpdfstring{$k$}{k}-regular measures}\label{sec:kRegularMeasures} Let $X$ be a CGWH space.

\begin{definition}\label{definition:kRegularMeasures}
    A \emph{$k$-regular measure} on $X$ is a Baire measure $\mu$ on $X$ for which the integration map,
        $$ C_b(X) \to \K, \;\;  f \mapsto \int_X f \:\mathrm{d} \mu, $$
    is continuous, where $C_b(X)$ carries its natural CGWH topology (see Definition \ref{definition:CbCGWHSpace}). We denote the set of $k$-regular measures on $X$ by $\mathcal{M}(X)$.
\end{definition}

The condition of $k$-regularity is a very weak regularity condition to impose on Baire measures, as the following lemma shows. 

\begin{lemma}\label{lemma:radonMeasuresKregular}
    Let $X$ be either a compact Hausdorff space or a weakly Hausdorff QCB space.
    Then every Baire measure is a $k$-regular measure.
\end{lemma}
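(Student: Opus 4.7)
The plan is to verify in both cases that the integration functional $\phi_\mu: C_b(X) \to \mathbb{K}$, $f \mapsto \int_X f \, \mathrm{d}\mu$, is continuous with respect to the natural CGWH topology on $C_b(X)$, which by Definition \ref{definition:CbCGWHSpace} is the colimit $\mathrm{colim}_n (nD)^X$. The two cases are handled by quite different mechanisms, reflecting how much topological information is available on $C_b(X)$ in each setting.

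For the compact Hausdorff case, I would appeal to Lemma \ref{lemma:continuityOnCb}: it suffices to check that $\phi_\mu(f_i) \to 0$ for every uniformly bounded net $(f_i)$ of continuous functions converging to $0$ in the compact-open topology. Since $X$ is compact, compact-open convergence coincides with uniform convergence (cf.\ Lemma \ref{lemma:YtoTheXforYmetricSpace}), so $\|f_i\|_\infty \to 0$. Using that $|\mu|(X) < \infty$ by our standing convention that measures are of bounded variation, the crude estimate $|\phi_\mu(f_i)| \leq \|f_i\|_\infty \, |\mu|(X)$ delivers the required convergence to zero.

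In the weakly Hausdorff QCB case, this sup-norm estimate is no longer available, because compact-open convergence on a non-compact space does not entail uniform convergence. Instead, I would exploit that by Lemma \ref{lemma:CbQCB}, $C_b(X)$ is itself a weakly Hausdorff QCB space, hence sequential by Proposition \ref{proposition:propertiesOfQCBspaces}. Thus it suffices to verify sequential continuity of $\phi_\mu$. For a convergent sequence $f_n \to f$ in $C_b(X)$, Lemma \ref{lemma:ConvergentSequencesInCb} provides a uniform bound $M$ with $|f_n| \leq M$ and gives convergence $f_n \to f$ in the compact-open topology, which in particular forces pointwise convergence. Lebesgue's dominated convergence theorem, applied with respect to the finite measure $|\mu|$ and the constant dominating function $M$, then yields $\int f_n \, \mathrm{d}\mu \to \int f \, \mathrm{d}\mu$.

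The main obstacle is the second case. Because compact-open convergence on a general QCB space is strictly weaker than uniform convergence, one cannot close the argument by an elementary norm estimate, and netwise reasoning along the lines of Lemma \ref{lemma:continuityOnCb} would require an analogue of dominated convergence for nets, which is not available in general. The resolution is precisely the sequentiality of $C_b(X)$, which allows the problem to be reduced to the classical sequential dominated convergence theorem; this is the point at which the QCB assumption, rather than mere CGWH, becomes indispensable.
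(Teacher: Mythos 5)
Your proof is correct. The QCB case is essentially identical to the paper's argument: reduce to sequential continuity via Lemma \ref{lemma:CbQCB} and Proposition \ref{proposition:propertiesOfQCBspaces}, extract a uniform bound and pointwise convergence from Lemma \ref{lemma:ConvergentSequencesInCb}, and conclude by dominated convergence. In the compact Hausdorff case you take a genuinely shorter route than the paper. The paper first extends the Baire measure $\mu$ to a Radon measure $\tilde{\mu}$ via Theorem \ref{theorem:baireMeasuresVsRadonMeasures}, uses inner regularity to find a compact $K$ with $|\tilde{\mu}|(X\setminus K)<\epsilon$, and then splits the integral over $K$ and $X\setminus K$ --- a tightness-style argument that would be the right shape on a non-compact space. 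You instead observe that since $X$ itself is compact, compact-open convergence of the net $(f_i)$ is already uniform convergence on all of $X$, so the single estimate $|\phi_\mu(f_i)|\le \|f_i\|_\infty\,|\mu|(X)$ closes the argument without ever invoking the Radon extension or regularity. This is valid (Lemma \ref{lemma:continuityOnCb} is stated for nets, and your sup-norm bound applies to nets just as well) and arguably cleaner; what the paper's version buys is only that its structure foreshadows the kind of tightness decomposition used later in Theorem \ref{theorem:convergenceInPXvsWeakConvergence}. Your closing remark correctly identifies why the two cases need different mechanisms.
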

\begin{proof}
    We consider both cases separately.\par 
    1. Let $X$ be a compact Hausdorff space and let $\mu$  be a Baire measure on $X$. Let $(f_i)$ be a net of continuous functions on $X$ uniformly bounded by $C>0$ converging uniformly on compact subsets to $0$. By Lemma \ref{lemma:continuityOnCb}, it suffices to show that 
        $$ \int f_i \:\mathrm{d} \mu \to 0. $$
    Let $\epsilon >0$. By Theorem \ref{theorem:baireMeasuresVsRadonMeasures}, $\mu$ has a unique extension to a Radon measure $\tilde{\mu}$ on $X$. Let $\epsilon >0$. Then, since $\tilde{\mu}$ is a Radon measure, there exists a compact subset $K$ such that $|\tilde{\mu}|(X\mathbin{\backslash} K)<\epsilon$. Now, by the compact convergence of $(f_i)$, there is an index $i_0$ such that for all $i\geq i_0$, 
        $$ \sup_{x\in K} |f_i(x)| < \epsilon. $$
    Now, for all $i\geq i_0$, 
    \begin{align*}
        \Big| \int f_i \:\mathrm{d} \mu \Big| \leq \int |f_i| \:\mathrm{d} |\tilde{\mu}| &= 
        \int_K |f_i| \:\mathrm{d} |\tilde{\mu}| + \int_{X\mathbin{\backslash} K} |f_i| \:\mathrm{d} |\tilde{\mu}| \\
        &\leq \sup_{x\in K} |f_i(x)| |\tilde{\mu}|(K) + C |\tilde{\mu}|(X \mathbin{\backslash} K) \\
        &< (|\mu|(X) + C) \,\epsilon.
    \end{align*}
    This shows convergence and, in conclusion, that $\mu$ is $k$-regular.\par
    2. Let $X$ be a weakly Hausdorff QCB space and let $\mu$ be a Baire measure on $X$. We want to show that 
        $$ I_\mu: C_b(X) \to \K, \;\;  f \mapsto \int_X f \:\mathrm{d} \mu, $$
    is continuous. Since $X$ is a QCB space, so is $C_b(X)$ (by Lemma \ref{lemma:CbQCB}). Hence, $C_b(X)$ is a sequential space (by Proposition \ref{proposition:propertiesOfQCBspaces}) and it suffices to show that $I_\mu$ is sequentially continuous. So let $f_n \to f$ be a convergent sequence in $C_b(X)$. By Lemma \ref{lemma:ConvergentSequencesInCb}, $(f_n-f)$ is uniformly bounded. Therefore, by the dominated convergence theorem,
        $$ |I_\mu(f_n)-I_\mu(f)|\leq \int_X |f_n - f| \:\mathrm{d} |\mu| \to 0, $$
    showing that $I_\mu$ is indeed sequentially continuous. 
\end{proof}

\begin{representationtheorem}\label{theorem:RieszForCbAndM}
    The mapping, 
        $$\mathcal{M}(X) \to C_b(X)^\wedge, \;\; \mu \mapsto \int_X - \:\mathrm{d} \mu, $$
    is a bijection between the set of $k$-regular measures on $X$ and the set of continuous linear functionals on $C_b(X)$.
\end{representationtheorem}
\begin{proof}
    Assume first that $X$ is Hausdorff. We need to show that for every $\phi \in C_b(X)^\wedge$, there exists a unique Baire measure $\mu$ such that 
        $$ \phi(f) = \int f \:\mathrm{d} \mu, $$
    for all $f\in C_b(X)$. By a general version of the Riesz representation theorem for Hausdorff spaces \cite[p.~111, Theorem 7.10.1]{bogachev2007measure}, 
    this is the case if, and only if, for every monotonically decreasing sequence $(f_n)$ in $C_b(X)$ converging pointwise to zero, 
    we have $\phi(f_n)\to 0$. 
    So let $(f_n)$ be such a sequence. 
    By Dini's theorem, $(f_n)$ converges to $0$ uniformly on compact subsets of $X$. 
    Moreover, since it is monotonically decreasing, $(f_n)$ is uniformly bounded by the constant $\sup_{x\in X} f_0(x)$. 
    Hence, by Lemma \ref{lemma:ConvergentSequencesInCb}, 
    $(f_n)$ converges to $0$ in $C_b(X)$ and by continuity of $\phi$, $\phi(f_n) \to 0$, which completes the proof of the  Hausdorff case. \\
    We now turn to the general case, assuming only that $X$ is a \emph{weakly} Hausdorff CG space. This will follow from applying the Hausdorff case to a certain Hausdorff space $\tilde{X}$ with the property that $C_b(X)=C_b(\tilde{X})$. \\
    Recall that a topological space $Y$ is \emph{functionally Hausdorff} if for any two points $y_1,y_2 \in Y$ with $f(y_1)=f(y_2)$ for all $f\in C(Y)$, we have $y_1=y_2$. 
    Functionally Hausdorff CG spaces form a reflective subcategory of $\mathsf{CGWH}$. 
    The functionally Hausdorff reflection $\tilde{X}$ of $X$ is given by the quotient of $X$ by the equivalence relation that identifies two points if the values of any real-valued function at both points coincide.
    Moreover, functionally Hausdorff CG spaces form an exponential ideal in $\mathsf{CGWH}$, i.e.~for any CG space $X$ and any functionally Hausdorff space $Y$, we have that $Y^X$ is functionally Hausdorff. 
    This implies that $Y^{\tilde{X}} = Y^X$, or any CG space $X$ and any functionally Hausdorff space $Y$, and therefore we also have $C(X)=C(\tilde{X})$ and $C_b(X)=C_b(\tilde{X})$, both as sets and as topological spaces. 
    Hence, a $k$-regular measure on $X$ is equivalently a $k$-regular measure on $\tilde{X}$ and a continuous linear functional on $C_b(X)$ is equivalently a continuous linear functional on $C_b(\tilde{X})$. Since any functionally Hausdorff space is in particular a Hausdorff space, the general case now follows from the Hausdorff case applied to $\tilde{X}$, as desired.
\end{proof}

As a direct consequence of Lemma \ref{lemma:radonMeasuresKregular} and Theorem \ref{theorem:RieszForCbAndM}, we obtain:

\begin{corollary}\label{corollary:RieszForQCB}
    For $X$ a weakly Hausdorff QCB space, the natural dual of $C_b(X)$ can be identified with the space of Baire measures on $X$.
\end{corollary}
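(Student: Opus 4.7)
The plan is to assemble the corollary directly from the two results cited just before it in the paper, without introducing new machinery. Concretely, I would argue that for a weakly Hausdorff QCB space $X$, the set $\mathcal{M}(X)$ of $k$-regular Baire measures coincides with the set of \emph{all} Baire measures on $X$, and then invoke the Riesz representation theorem (Theorem \ref{theorem:RieszForCbAndM}) to transport this equality to the natural dual $C_b(X)^\wedge$.

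First, I would recall from Lemma \ref{lemma:radonMeasuresKregular}(2) that on a weakly Hausdorff QCB space every $\mathbb{K}$-valued Baire measure $\mu$ is automatically $k$-regular, i.e.\ the integration functional $I_\mu: f \mapsto \int_X f\,\mathrm{d}\mu$ is continuous on $C_b(X)$ with respect to its natural CGWH topology. Since by Definition \ref{definition:kRegularMeasures} $\mathcal{M}(X)$ consists exactly of those Baire measures for which $I_\mu$ is continuous, this inclusion yields $\mathcal{M}(X) = \{\text{Baire measures on } X\}$ as sets; the reverse inclusion is tautological.

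Then I would apply Theorem \ref{theorem:RieszForCbAndM} (the Riesz representation theorem), which states that the map $\mu \mapsto I_\mu$ is a bijection $\mathcal{M}(X) \to C_b(X)^\wedge$. Composing this bijection with the set-level identification from the previous paragraph gives the desired identification of $C_b(X)^\wedge$ with the set of Baire measures on $X$.

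There is no real obstacle here — the corollary is a mere substitution of the QCB-specific strengthening (Lemma \ref{lemma:radonMeasuresKregular}) into the general Riesz statement. The only thing worth being slightly careful about is that the ``identification'' asserted by the corollary is, strictly speaking, a bijection of sets, since Theorem \ref{theorem:RieszForCbAndM} is phrased at the level of sets; the topological/CGWH structure on $C_b(X)^\wedge$ is discussed separately (as the natural dual). So the proof need only observe that both bijections involved are the same integration map, and no further verification is required.
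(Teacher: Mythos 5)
Your proof is correct and follows exactly the route the paper itself takes: the corollary is stated as a direct consequence of Lemma~\ref{lemma:radonMeasuresKregular} (every Baire measure on a weakly Hausdorff QCB space is $k$-regular, so $\mathcal{M}(X)$ is the set of all Baire measures) combined with Theorem~\ref{theorem:RieszForCbAndM}. Your additional remark that the identification is, at this stage, a bijection of sets is a fair and accurate reading of how the theorem is phrased.
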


We will henceforth identify the set of $k$-regular measures $\mc{M}(X)$ with $C_b(X)^\wedge$. In particular, we topologise $\mc{M}(X)$ according to the topology on $C_b(X)^\wedge$. In this way, we obtain an endofunctor, 
    $$ \mc{M}: \cgwh \to \cgwh, \;\; X \mapsto \mc{M}(X), \; f \mapsto f_*. $$
The next step will be to endow this endofunctor $\mc{M}$ with the structure of a monad. 

\section{The Riesz and Baire monads}\label{sec:TheMonad}

\subsection{The Riesz monad}

We now describe the monad structure of the \emph{Riesz monad} $\mathcal{M}$. Unit and multiplication are given as follows.

\begin{definition}\label{definition:UnitAndMultiplicationOfM}
    Define the (families of) map(s),
        $$ \delta_\bullet : X \mapsto \mathcal{M}(X), \;\; x \mapsto \delta_x, \;\;\;\;\; (X \in\cgwh) $$
    and 
        $$ \Rbag: \mathcal{M}(\mathcal{M}(X)) \to \mathcal{M}(X),\;\; \Rbag(\pi)(f) := \int_{\mathcal{M}(X)}\mu(f)\:\mathrm{d} \pi(\mu). \;\;\;\;\; (X \in\cgwh) $$
\end{definition}

Note that under the identification of measures with functionals via Theorem \ref{theorem:RieszForCbAndM}, we have that 

\begin{equation}\label{equation:monadMultiplicationViaFunctionals}
    \Rbag (\pi)(f) = \pi( \mu \mapsto \mu(f) ), \qquad (\pi \in \mathcal{M}(\mathcal{M}(X)), \: f \in C_b(X), \: X \in \cgwh)
\end{equation}

as well as, 

\begin{equation}\label{equation:pushforwardViaFunctionals}
    f_* \mu = \mu( - \circ f ). \qquad (f: X \to Y, \: \mu \in \mc{M}(X), \: X,Y \in \cgwh)
\end{equation}

Eq.~\eqref{equation:monadMultiplicationViaFunctionals} and \eqref{equation:pushforwardViaFunctionals} display $\mathcal{M}$ as a submonad of the double dualisation monad $\mathbb{K}^{\mathbb{K}^{(-)}}$, which is also know as the \emph{continuation monad} in the context of programming language theory. \\
As a first step towards showing that $\delta_\bullet$ and $\Rbag$ equip $\mc{M}$ with the structure of a monad, we will use this to verify:

\begin{lemma}
    The families of maps $\delta_\bullet$ and $\Rbag$ constitute natural transformations.
\end{lemma}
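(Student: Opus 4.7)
The plan is to exploit the functional interpretations in Eqs.~\eqref{equation:monadMultiplicationViaFunctionals} and \eqref{equation:pushforwardViaFunctionals}, which reduce each naturality square to a short calculation on test functions in $C_b(-)$, and to use Cartesian closedness of $\cgwh$ together with the closed embedding $\mc{M}(Z) \hookrightarrow C(C_b(Z), \K)$ to establish continuity of the components.

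For $\delta_\bullet$, I will verify $\mc{M}(f) \circ \delta_X = \delta_Y \circ f$ for every continuous $f: X \to Y$. Given $x \in X$ and testing both sides against an arbitrary $g \in C_b(Y)$, Eq.~\eqref{equation:pushforwardViaFunctionals} gives
$$ \mc{M}(f)(\delta_x)(g) \;=\; \delta_x(g \circ f) \;=\; g(f(x)) \;=\; \delta_{f(x)}(g), $$
so the two sides agree as elements of $\mc{M}(Y)$. For continuity of $\delta_X$, the Cartesian-closedness reduction shows that it suffices to prove continuity of the evaluation pairing $X \times C_b(X) \to \K$, $(x,f)\mapsto f(x)$; since $C_b(X) = \mathrm{colim}_n (nD)^X$ and $X \times (-)$ preserves colimits as a left adjoint in $\cgwh$, this pairing is obtained as the colimit of the Cartesian-closed evaluations $X \times (nD)^X \to nD \subseteq \K$.

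For $\Rbag$, I will verify $\mc{M}(f) \circ \Rbag_X = \Rbag_Y \circ \mc{M}(\mc{M}(f))$. Testing on $g \in C_b(Y)$ and combining both equations:
$$ \mc{M}(f)\bigl(\Rbag_X(\pi)\bigr)(g) \;=\; \Rbag_X(\pi)(g\circ f) \;=\; \pi\bigl(\mu \mapsto \mu(g \circ f)\bigr), $$
$$ \Rbag_Y\bigl(\mc{M}(\mc{M}(f))(\pi)\bigr)(g) \;=\; \mc{M}(\mc{M}(f))(\pi)\bigl(\nu \mapsto \nu(g)\bigr) \;=\; \pi\bigl(\mu \mapsto \mc{M}(f)(\mu)(g)\bigr) \;=\; \pi\bigl(\mu \mapsto \mu(g\circ f)\bigr), $$
so both sides of the naturality square coincide.

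The main obstacle I expect is the continuity of $\Rbag_X: \mc{M}(\mc{M}(X)) \to \mc{M}(X)$. By the same Cartesian-closedness reduction as for $\delta$, this amounts to showing continuity of the pairing $\mc{M}(\mc{M}(X)) \times C_b(X) \to \K$, $(\pi, f) \mapsto \pi(\mathrm{ev}_f)$, where $\mathrm{ev}_f: \mu \mapsto \mu(f)$. The subtlety is that $\mathrm{ev}_f$ need not lie in $C_b(\mc{M}(X))$, since measures can have unbounded total variation; the natural way around this is to identify $\mc{M}$ as a subfunctor of the continuation monad $\K^{\K^{(-)}}$ on $\cgwh$, whose unit and multiplication are continuous by virtue of the Cartesian-closed adjunction $\K^{(-)} \dashv \K^{(-)}$ between $\cgwh$ and $\cgwh^{\mathrm{op}}$, and to deduce continuity of $\Rbag$ by restriction.
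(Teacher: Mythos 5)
Your naturality computations for $\delta_\bullet$ and $\Rbag$ are correct and essentially identical to the paper's (test against $g \in C_b(Y)$ and unwind Eqs.~\eqref{equation:monadMultiplicationViaFunctionals} and \eqref{equation:pushforwardViaFunctionals}), and your continuity argument for $\delta_\bullet$ is a sound minor variant: you uncurry to the evaluation pairing and use that $X \times (-)$ preserves the colimit defining $C_b(X)$, whereas the paper factors $\delta_\bullet$ through $C(X)^\wedge \to C_b(X)^\wedge$. Either route works.

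The gap is in your treatment of the continuity of $\Rbag$. You are right to flag that $\mathrm{ev}_f : \mu \mapsto \mu(f)$ is continuous but not bounded on $\mc{M}(X)$ (total variations are not uniformly bounded), so $\pi(\mathrm{ev}_f)$ is not directly an application of $\pi \in C_b(\mc{M}(X))^\wedge$ to an element of its domain --- this is a real subtlety that the paper's one-line appeal to Cartesian closedness passes over, and which disappears only after restricting to $\mc{P}$, where $|\mathrm{ev}_f| \leq \|f\|_\infty$. However, your proposed repair does not go through as stated: there is no natural embedding of $\mc{M}(X) = C_b(X)^\wedge$ into $\K^{\K^X} = C(C(X),\K)$, because a continuous linear functional on $C_b(X)$ does not extend (canonically or at all) to one on $C(X)$; the continuous inclusion $C_b(X) \hookrightarrow C(X)$ dualises to a map $C(X)^\wedge \to C_b(X)^\wedge$ in the \emph{wrong} direction. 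So ``restrict the continuation monad's multiplication'' has nothing to restrict along. To actually close this step you would need either to work with the $C_b$-relative double dual $C(C_b(-),\K)$ and confront the same boundedness issue there, or to argue integrability of $\mu \mapsto \mu(f)$ against $k$-regular measures $\pi$ on $\mc{M}(X)$ directly --- or simply to restrict attention to $\mc{P}$, where the paper's evaluation argument is unproblematic.
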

\begin{proof}
    We need to show $\delta_\bullet$ and $\Rbag$ are continuous and make the necessary naturality square commute.
    
    1. \emph{The map $\delta_\bullet$ is continuous:} Under the identification of measures and functionals, we need show that the map 
        $$ X \to C_b(X)^\wedge, \;\; x \mapsto \delta_\bullet $$
    is continuous. This map factors as the composite, 
    \[\begin{tikzcd}
	   X & {C(X)^\wedge} && {C_b(X)^\wedge}
	   \arrow["{\delta_\bullet}", from=1-1, to=1-2]
	   \arrow["{(-)|_{C_b(X)}}", from=1-2, to=1-4]
    \end{tikzcd}\]
    so its continuity follows from Cartesian closedness of $\cgwh$ and the fact that the inclusion map $C_b(X) \to C(X)$ is continuous. 
        
    2. \emph{Naturality of $\delta_\bullet$:} We need to verify that for all CGWH spaces $X,Y$ and any continuous map $f:X\to Y$ the following diagram commutes:
    \[\begin{tikzcd}
	   X & Y \\
	   {\mathcal{M}(X)} & {\mathcal{M}(Y)}
	   \arrow["f", from=1-1, to=1-2]
	   \arrow["{\delta_\bullet}"', from=1-1, to=2-1]
	   \arrow["{\delta_\bullet}", from=1-2, to=2-2]
	   \arrow["{f_*}"', from=2-1, to=2-2]
    \end{tikzcd}\]
    This is to say that for all $x\in X$,
        $$ f_*\delta_x = \delta_{f(x)}, $$
    which is true.
    
    3. \emph{The map $\Rbag$ is continuous:} Being given by a certain evaluation-type map (see Eq. \eqref{equation:monadMultiplicationViaFunctionals}), the continuity of $\Rbag$ follows from Cartesian closedness of $\cgwh$.
        
    4. \emph{Naturality of $\Rbag$:} We want to show that the following diagram commutes:
    \[\begin{tikzcd}
	    {\mathcal{M}(\mathcal{M}(X))} & {\mathcal{M}(\mathcal{M}(Y))} \\
	   {\mathcal{M}(X)} & {\mathcal{M}(Y)}
	    \arrow["{(f_*)_*}", from=1-1, to=1-2]
	    \arrow["{\Rbag}"', from=1-1, to=2-1]
	    \arrow["{\Rbag}", from=1-2, to=2-2]
	    \arrow["{f_*}"', from=2-1, to=2-2]
    \end{tikzcd}\]
    To show this, we calculate that, for all $\pi\in \mathcal{M}(\mathcal{M}(X))$ and all $h\in C_b(X)$,
    \begin{align*}
            f_*\left(\Rbag(\pi)\right)(h) 
            &= \:\Rbag(\pi)(h \circ f)\tag{Eq.~\eqref{equation:pushforwardViaFunctionals}}  \\
            &= \pi(\mu \mapsto \mu(h \circ f)) \tag{Eq.~\eqref{equation:monadMultiplicationViaFunctionals}} \\
            &= \pi(\mu \mapsto f_*\mu(h)) 
            \tag{Eq.~\eqref{equation:pushforwardViaFunctionals}} \\
            &= \pi(f_* \circ(\mu \mapsto \mu(h)))\\
            &= (f_*)_*\pi(\mu \mapsto \mu(h))
            \tag{Eq.~\eqref{equation:pushforwardViaFunctionals}}\\
            &= \:\Rbag((f_*)_*\pi)(h), \tag{Eq.~\eqref{equation:monadMultiplicationViaFunctionals}} 
    \end{align*}
    which completes the proof.
\end{proof}

\begin{theorem}
    $(\mathcal{M}, \delta_\bullet, \Rbag)$ is a monad on $\cgwh$, the \emph{Riesz monad}.
\end{theorem}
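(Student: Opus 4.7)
The plan is to verify the three monad axioms—left unit, right unit, and associativity—by working with the functional representation of measures. The naturality and continuity of $\delta_\bullet$ and $\Rbag$ have been established in the preceding lemma, so what remains is purely algebraic, and it rests on equations \eqref{equation:monadMultiplicationViaFunctionals} and \eqref{equation:pushforwardViaFunctionals}, which express $\Rbag$ and pushforward as nested functional evaluations. Throughout, we rely on the Riesz representation theorem to identify two measures in $\mathcal{M}(X)$ as soon as they agree as functionals on $C_b(X)$.

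For the left unit law, I would take $\mu \in \mathcal{M}(X)$ and $f \in C_b(X)$ and apply \eqref{equation:monadMultiplicationViaFunctionals} to obtain $\Rbag(\delta_\mu)(f) = \delta_\mu(\nu \mapsto \nu(f)) = \mu(f)$, the last equality being the defining property of the Dirac measure at the point $\mu \in \mathcal{M}(X)$. For the right unit law, one similarly expands $\Rbag((\delta_\bullet)_*\mu)(f) = (\delta_\bullet)_*\mu(\nu \mapsto \nu(f)) = \mu(x \mapsto \delta_x(f)) = \mu(f)$, combining \eqref{equation:monadMultiplicationViaFunctionals} and \eqref{equation:pushforwardViaFunctionals} with the identity $\delta_x(f)=f(x)$.

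For associativity, given $\Pi \in \mathcal{M}(\mathcal{M}(\mathcal{M}(X)))$ and $f \in C_b(X)$, both $\Rbag(\Rbag_{\mathcal{M}(X)}\Pi)(f)$ and $\Rbag(\mathcal{M}(\Rbag)\Pi)(f)$ unfold—via two applications of \eqref{equation:monadMultiplicationViaFunctionals} on the left, and a combination of \eqref{equation:monadMultiplicationViaFunctionals} and \eqref{equation:pushforwardViaFunctionals} followed by a third application of \eqref{equation:monadMultiplicationViaFunctionals} on the right—to the common expression $\Pi(\pi \mapsto \pi(\nu \mapsto \nu(f)))$, establishing the required equality.

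Conceptually, this proof exhibits $\mathcal{M}$ as a submonad of the double-dualisation (``continuation'') monad $\mathbb{K}^{\mathbb{K}^{(-)}}$ on $\cgwh$, which inherits its monad structure automatically from Cartesian closedness. All the genuine analytic content has already been packaged into the Riesz representation theorem and the choice of $\cgwh$-topology on $C_b(X)$, so there is no real obstacle: the verification amounts to unwinding \eqref{equation:monadMultiplicationViaFunctionals} and \eqref{equation:pushforwardViaFunctionals} in each of the three diagrams.
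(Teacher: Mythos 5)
Your proposal is correct and follows essentially the same route as the paper: both verify the two unit laws and associativity by unwinding $\Rbag$ and the pushforward through Eq.~\eqref{equation:monadMultiplicationViaFunctionals} and Eq.~\eqref{equation:pushforwardViaFunctionals}, deferring naturality and continuity to the preceding lemma and using the identification of measures with functionals on $C_b(X)$ to conclude equality. The remark about $\mathcal{M}$ being a submonad of the continuation monad likewise matches the paper's own framing.
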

\begin{proof}
    We need to verify the three monad laws. Let $X$ is a CGWH space, $\mu \in \mathcal{M}(X)$, $f\in C_b(X)$ and $\pi \in \mathcal{M}(\mathcal{M}(X))$.

    1. \emph{First unit law:}
        $$\Rbag(\delta_\mu)(f) = \int \nu(f) \:\mathrm{d} \delta_\mu(\nu) = \mu(f). $$ 
        
    2. \emph{Second unit law:}
    $$(\Rbag (\delta_\bullet)_*\mu)(f) = \int \delta_x(f) \:\mathrm{d} \mu = \int f \:\mathrm{d} \mu = \mu(f). $$
    
    3. \emph{Associativity law:} We need to show that for all $\Pi\in \mathcal{M}(\mathcal{M}(\mathcal{M}(X))),h \in C_b(X)$,
        $$\Rbag \left(\Rbag (\Pi)\right)(h)= \Rbag \left(\Rbag\right)_*\Pi. $$
    We calculate, 
    \begin{align*}
        \Rbag \left(\Rbag (\Pi)\right)(h) 
        &= \left(\Rbag(\Pi)\right)( \mu \mapsto \mu(h) ) \tag{Eq.~\eqref{equation:monadMultiplicationViaFunctionals}} \\
        &= \Pi(\nu \mapsto \nu(\mu \mapsto \mu(h))) \tag{Eq.~\eqref{equation:monadMultiplicationViaFunctionals}} \\
        &= \Pi(\nu \mapsto \left(\Rbag (\nu)\right)(h)) \tag{Eq.~\eqref{equation:monadMultiplicationViaFunctionals}} \\
        &= \Pi\left([\mu \mapsto \mu(h)]\circ \Rbag\right) \\
        &= \left(\Rbag\right)_*\Pi (\mu \mapsto \mu(h)) \tag{Eq.~\eqref{equation:pushforwardViaFunctionals}}\\
        &= \:\Rbag (\left(\Rbag\right)_*\Pi) \tag{Eq.~\eqref{equation:monadMultiplicationViaFunctionals}},
    \end{align*}
    completing the proof.
\end{proof}

\subsection{The Baire monad}

By Lemmas \ref{lemma:naturalDualQCB} and \ref{lemma:CbQCB}, $\mathcal{M}(X)$ is a QCB space whenever $X$ is a (weakly Hausdorff) QCB space, and in this case, $\mathcal{M}(X)$ consists exactly of the Baire measures on $X$. The Riesz monad therefore restricts to a monad on $\mathsf{QCB}_{h}$, which we refer to as the \emph{Baire monad}.

\subsection{Enriched structure: a strengthened continuous mapping theorem}\label{sec:EnrichedStructure}

A basic fact concerning convergence of random variables in distribution is the \emph{continuous mapping theorem} \cite{mann1943stochastic}, one version of which can be formulated as follows. Let $f:X\to Y$ be a continuous map between metric spaces $X,Y$, and let $(x_n)$ be a sequence of $X$-valued random variables converging in distribution to an $X$-valued random variable $x$. Then $f(x_n)$ converges in distribution to $f(x)$. In other words, any weakly convergent sequence of Borel probability measures $(\mu_n)$ is mapped to a weakly convergent sequence $(f_*\mu_n)$ under the pushforward operation $f_*$. When $X,Y$ are Polish spaces we can formulate this even more concisely as the continuity of the pushforward
    $$ f_*: \mc{P}(X) \to \mc{P}(Y), $$
between spaces of probability measures with the topology of weak convergence. \par 
Cartesian closedness enables a stronger statement in our setting, giving $\mc{M}(X)$ the structure of an \emph{enriched monad}. 

\begin{proposition}\label{proposition:enrichedContinuityM}
    Let $X, Y$ be CGWH spaces. Then the map 
        $$ C(X,Y) \:\to \:C(\mc{M}(X), \,\mc{M}(Y)), \;\;\; f \mapsto f_*$$
    is continuous.
\end{proposition}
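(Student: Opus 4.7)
The plan is to unfold the definitions on both sides and reduce the claim to Cartesian closedness of $\cgwh$ combined with the enriched functoriality of $C_b$ (Lemma \ref{lemma:CbEnrichedFunctor}). By Cartesian closedness, to show that $f \mapsto f_*$ is continuous as a map $C(X,Y) \to C(\mc{M}(X), \mc{M}(Y))$, it is equivalent to show that the uncurried map $C(X,Y) \times \mc{M}(X) \to \mc{M}(Y)$, $(f, \mu) \mapsto f_*\mu$, is continuous. Since $\mc{M}(Y) = C_b(Y)^\wedge$ is topologised as a (closed) subspace of $C(C_b(Y))=\K^{C_b(Y)}$, another appeal to Cartesian closedness reduces this further to continuity of
$$ C(X,Y) \times \mc{M}(X) \times C_b(Y) \to \K, \qquad (f, \mu, g) \mapsto (f_*\mu)(g). $$

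Now I would use Eq.~\eqref{equation:pushforwardViaFunctionals} to rewrite $(f_*\mu)(g) = \mu(g \circ f)$, so that the above map factors as the composite
$$ C(X,Y) \times \mc{M}(X) \times C_b(Y) \xrightarrow{\;\mathrm{id}_{\mc{M}(X)}\,\times\, \mathrm{comp}\;} \mc{M}(X) \times C_b(X) \xrightarrow{\;\mathrm{ev}\;} \K, $$
where $\mathrm{comp}: C(X,Y) \times C_b(Y) \to C_b(X)$ sends $(f,g) \mapsto g \circ f$ and $\mathrm{ev}$ is the evaluation pairing. The first factor is continuous precisely because $C_b$ is a $\cgwh$-enriched functor (Lemma \ref{lemma:CbEnrichedFunctor}), while the evaluation $\mathrm{ev}: \mc{M}(X) \times C_b(X) \to \K$ is continuous by Cartesian closedness of $\cgwh$ together with the fact that $\mc{M}(X) \hookrightarrow C(C_b(X), \K)$ is a topological subspace inclusion. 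Composing with the (obviously continuous) reshuffling map gives the desired continuity, and currying back yields the statement.

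There is no serious obstacle here: the proof is essentially a bookkeeping exercise in Cartesian closedness, with the non-trivial ingredient — continuity of the composition map into the colimit topology on $C_b(X)$ — already packaged in Lemma \ref{lemma:CbEnrichedFunctor}. The only subtlety worth flagging is that one must invoke Cartesian closedness twice (once to strip off $C(\mc{M}(X), -)$ and once to strip off the inclusion $\mc{M}(Y) \hookrightarrow \K^{C_b(Y)}$); this is legitimate because $\mc{M}(Y)$ carries the subspace topology from $C(C_b(Y))$, so continuity of a map into $\mc{M}(Y)$ is equivalent to continuity of its composition with the inclusion into $C(C_b(Y))$.
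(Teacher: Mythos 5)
Your proof is correct and is essentially the paper's argument in uncurried form: the paper factors $f\mapsto f_*$ as the pullback map $C(X,Y)\to C(C_b(Y),C_b(X))$ (Lemma \ref{lemma:CbEnrichedFunctor}) followed by the natural-dual map $C(C_b(Y),C_b(X))\to C(C_b(X)^\wedge,C_b(Y)^\wedge)$, whose continuity is exactly your evaluation-map step unpacked via Cartesian closedness. Both arguments rest on the same two ingredients, so there is nothing to add.
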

\begin{proof}
    This map factors as the composite,
    \[\begin{tikzcd}
	{C(X, Y) } & {C(C_b(Y), C_b(X))} & {C(C_b(X)^\wedge, C_b(Y)^\wedge)=C(\mathcal{M}(X), \mathcal{M}(Y))}.
	\arrow["{(-)^*}", from=1-1, to=1-2]
	\arrow["{(-)^\wedge}", from=1-2, to=1-3]
    \end{tikzcd}\]
    The first of these maps is continuous by Lemma \ref{lemma:CbEnrichedFunctor}, while the continuity of the second follows from the definition of the natural dual $(-)^\wedge$ and the Cartesian closedness of $\cgwh$.
\end{proof}

As we will show in Section \ref{sec:relationToGiryMonad}, when $X$ is a Polish space, the space of probability measures $\mc{P}(X)$ with the weak topology is a closed subspace of of $\mathcal{M}(X)$. Hence, Proposition \ref{sec:relationToGiryMonad} can indeed be interpreted as a strengthened continuous mapping theorem, reducing to the following statement in the case of Polish spaces. 

\begin{example}
    Let $X, Y$ be complete separable metric spaces and let $(f_n)$ be a sequence of continuous maps $X\to Y$ converging uniformly on compact sets to $f: X \to Y$. Moreover, let $(x_n)$ be a sequence of random variables on $X$ converging in distribution to a random variable $x$ on $X$, i.e.~the sequence $(\mu_n)$ of their distributions converging weakly to the distribution $\mu \in \mc{P}(X)$ of $x$. Then $f_n(x_n)$ converges to $f(x)$ in distribution, i.e.~the sequence of pushforwards $((f_n)_*\mu_n)$ converges weakly to $f_*\mu$.
\end{example}

Note that, in contrast to the classical continuous mapping theorem, this is a statement about simultaneous convergence in \emph{both $(f_n)$ and $(x_n)$}.  \\
Finally, we remark that Proposition \ref{proposition:enrichedContinuityM} implies that $\mc{M}$ is an \emph{enriched monad}. Recall that an $\mathsf{V}$-enriched monad on a $\mathsf{V}$-enriched category $\mathsf{C}$ is a $\mathsf{V}$-enriched endofunctor $T$ on $\mathsf{C}$ equipped with two $\mathsf{V}$-enriched natural transformations making the usual diagrams commute. Cartesian closed categories are canonically enriched over themselves, and Proposition \ref{proposition:enrichedContinuityM} now implies:

\begin{corollary}
    The Riesz monad $\mc{M}$ is canonically a $\cgwh$-enriched monad on $\cgwh$. Likewise, the Baire monad $\mc{M}$ is canonically an $\mathsf{QCB}_{h}$-enriched monad on $\mathsf{QCB}_{h}$.
\end{corollary}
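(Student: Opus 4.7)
The claim has three defining ingredients that I will verify in turn: $\mc{M}$ is a $\cgwh$-enriched endofunctor, the transformations $\delta_\bullet$ and $\Rbag$ are $\cgwh$-enriched natural transformations, and the enriched monad axioms hold. For enriched functoriality, I will appeal directly to Proposition \ref{proposition:enrichedContinuityM}, which provides the required continuity of $f\mapsto f_*$ on hom-objects; compatibility with identities and composition is immediate from the already-established ordinary functoriality of $\mc{M}$ (both sides of each equation factor through continuous maps $C(X,Y)\times C(Y,Z)\to C(\mc{M}(X),\mc{M}(Z))$ which agree pointwise).

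For the enriched naturality of $\delta_\bullet$ and $\Rbag$, my plan is to invoke a general principle: in any Cartesian closed category $\mathsf{V}$, viewed as enriched over itself, in which the terminal object is a separator, every ordinary natural transformation between $\mathsf{V}$-enriched endofunctors is automatically $\mathsf{V}$-enriched. Indeed, $\mathsf{V}$-naturality of $\alpha\colon F\Rightarrow G$ asserts the equality of two morphisms $[X,Y]\to [FX,GY]$ in $\mathsf{V}$, which by Cartesian closedness transposes to two morphisms $[X,Y]\times FX\to GY$; testing these on points $(f,a)$ reduces precisely to the ordinary naturality square $\alpha_Y\circ F(f) = G(f)\circ \alpha_X$ evaluated at $a$. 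Since the one-point space is a separator in both $\cgwh$ and $\mathsf{QCB}_{h}$, and the ordinary naturality of $\delta_\bullet$ and $\Rbag$ was already established in the construction of the Riesz monad, enriched naturality follows.

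Finally, the three enriched monad axioms (associativity and the two unit laws) are equations between morphisms in the base category $\cgwh$ and therefore coincide verbatim with the ordinary monad axioms verified in the preceding theorem. This completes the argument for the Riesz monad. For the Baire monad, Theorem \ref{theorem:QCBCartesianClosed} tells us that $\mathsf{QCB}_{h}$ is a Cartesian closed subcategory of $\cgwh$ whose products and exponentials coincide with those in $\cgwh$, and by Lemmas \ref{lemma:naturalDualQCB} and \ref{lemma:CbQCB} the endofunctor $\mc{M}$ preserves $\mathsf{QCB}_{h}$, so the $\cgwh$-enrichment restricts to a $\mathsf{QCB}_{h}$-enrichment of the Baire monad. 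The only genuinely nontrivial input is the enriched functoriality provided by Proposition \ref{proposition:enrichedContinuityM}; everything else is a formal consequence of working in a self-enriched Cartesian closed setting with a separator, so I do not anticipate any significant obstacle.
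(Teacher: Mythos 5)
Your proposal is correct and follows essentially the same route as the paper, which derives the corollary directly from Proposition \ref{proposition:enrichedContinuityM} together with the self-enrichment of a Cartesian closed category. You simply make explicit the formal steps the paper leaves implicit (in particular, the observation that ordinary naturality upgrades to enriched naturality because the point is a separator), which is a faithful elaboration rather than a different argument.
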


\subsection{Commutativity: product measures and Fubini's theorem}\label{sec:commutativityAndFubini}

Any enriched monad is canonically a strong monad \cite[Section 3]{ratkovic2013morita}. One may therefore ask whether $\mc{M}$ is a \emph{commutative} strong monad, or equivalently \cite{kock1972strong}, a symmetric monoidal monad. This does indeed hold in the case of the Baire monad (Theorem \ref{M_bbd_meas_monad_commutative} below), and this fact is closely related to product measures and Fubini's theorem. \par 
The first crucial observation in this direction concerns the Baire $\sigma$-algebra on a product of QCB spaces.

\begin{lemma}\label{lemma:productMeasuresOnQCBspaces}
    Let $X, Y$ be QCB spaces. Then,
        $$ \mc{B} a (X\times Y) = \mc{B} a (X) \otimes \mc{B} a (Y), $$
    where 
        $$  \mc{B} a (X) \otimes \mc{B} a (Y) =  \sigma(\{A \times B \,|\, A \in \mc{B} a (X), B \in \mc{B} a (Y)\}) $$
    is the product $\sigma$-algebra of the Baire $\sigma$-algebras on each factor.
\end{lemma}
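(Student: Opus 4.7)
My plan is to establish the two inclusions separately.

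The inclusion $\mathcal{B}a(X) \otimes \mathcal{B}a(Y) \subseteq \mathcal{B}a(X \times Y)$ is the easy direction and in fact holds for any pair of CGWH spaces, without the QCB hypothesis. I would observe that the projections $\pi_X, \pi_Y$ are continuous, hence Baire measurable, so that for every $A \in \mathcal{B}a(X)$ and $B \in \mathcal{B}a(Y)$ the rectangle $A \times B = \pi_X^{-1}(A) \cap \pi_Y^{-1}(B)$ lies in $\mathcal{B}a(X\times Y)$. Since such rectangles generate the product $\sigma$-algebra, the inclusion follows.

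The reverse inclusion $\mathcal{B}a(X \times Y) \subseteq \mathcal{B}a(X) \otimes \mathcal{B}a(Y)$ is the substantial direction and is where the QCB hypothesis enters. It suffices to show that every continuous $f: X \times Y \to \mathbb{R}$ is measurable with respect to the product $\sigma$-algebra. By Cartesian closedness of $\mathsf{QCB}_{h}$, $f$ corresponds to a continuous $\hat{f}: X \to C(Y, \mathbb{R})$, where the codomain is itself a weakly Hausdorff QCB space by Theorem \ref{theorem:QCBCartesianClosed}, hence sequential, hereditarily separable, and hereditarily Lindel\"of by Proposition \ref{proposition:propertiesOfQCBspaces}. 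Fix a countable dense sequence $(g_n)$ in $C(Y,\mathbb{R})$. The plan is to build simple approximants $\hat{f}_k: X \to C(Y,\mathbb{R})$ that take only finitely many values from $\{g_1, \ldots, g_{n_k}\}$ on a Baire-measurable partition $\{A_j^{(k)}\}_j$ of $X$ and satisfy $\hat{f}_k(x) \to \hat{f}(x)$ in $C(Y,\mathbb{R})$ for each $x$. Since convergence in $C(Y,\mathbb{R})$ entails pointwise convergence on $Y$ (by Lemma \ref{lemma:ConvergenceInYtoTheX}), the product functions $(x,y) \mapsto \hat{f}_k(x)(y) = \sum_j \mathbf{1}_{A_j^{(k)}}(x)\, g_j(y)$ converge pointwise to $f(x,y)$, and as each such approximant is manifestly $\mathcal{B}a(X) \otimes \mathcal{B}a(Y)$-measurable and pointwise limits preserve measurability, $f$ is too.

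The main obstacle is the construction of the Baire-measurable partitions $\{A_j^{(k)}\}$ of $X$. Because $X$ is not assumed metrisable, one cannot simply use metric balls around the $g_n$ to define ``closeness''; one must instead extract a countable family of cozero neighbourhoods of the $g_n$ in the QCB space $C(Y,\mathbb{R})$ whose preimages under $\hat{f}$ yield cozero (hence Baire) sets in $X$. The hereditarily Lindel\"of property of $X$ then plays a crucial role, ensuring that the resulting countable cover can be refined into a genuine Baire-measurable partition, which is then disjointified in the usual way to produce $\{A_j^{(k)}\}$. It is precisely this countability-driven interplay between the topological and measurable structure, available to QCB spaces but not to general CGWH spaces, that makes the lemma a genuine QCB phenomenon.
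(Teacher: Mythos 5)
Your first inclusion is fine (and is essentially the paper's, phrased via measurability of the projections rather than via zero sets). The problem is the second inclusion, where the entire weight of the lemma rests on the approximation step you defer: the existence of countably-valued, Baire-measurable maps $\hat f_k\colon X\to C(Y,\mathbb{R})$ with $\hat f_k(x)\to\hat f(x)$ for \emph{every} $x$. The properties you invoke --- separability, hereditary Lindel\"ofness, sequentiality of $C(Y,\mathbb{R})$ --- do not yield this. The standard construction (``send $x$ to the first $g_i$ within distance $1/k$ of $\hat f(x)$'') needs a metric, or at least a countable local base at each point together with a uniform way of making neighbourhoods shrink; $C(Y,\mathbb{R})$ with the $k$-ified compact-open topology is in general not first countable (its neighbourhood filters are indexed by the uncountably many compacta of $Y$), and a countable dense set or a countable network gives you sets containing $\hat f(x)$ but no guarantee that representatives chosen from them converge to $\hat f(x)$. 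Disjointifying a countable cozero cover, which is what hereditary Lindel\"ofness buys you, produces a measurable partition but says nothing about convergence of the resulting simple maps. So the ``main obstacle'' you name is not a technicality to be smoothed over; it is the theorem, and as sketched the construction does not go through. (A further untreated point: you need the neighbourhoods to be cozero in $C(Y,\mathbb{R})$ so that their $\hat f$-preimages are Baire in $X$, which requires a functional-separation property of $C(Y,\mathbb{R})$ that is not established.)

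For contrast, the paper circumvents exactly this difficulty by a different route: a continuous function on the CG-product is in particular separately continuous, and the hard content is that every QCB space is a \emph{Rudin space}, so that every separately continuous function on $X\times Y$ is a pointwise limit of functions continuous on the $\mathsf{Top}$-product $X\times_{\mathsf{Top}}Y$; the latter are product-measurable by a result of Bogachev combined with hereditary Lindel\"ofness. The Rudin-space property is obtained from nontrivial results of Banakh on $C_p$-spaces (countable networks, paracompactness), which is a sign that some genuinely non-elementary input is required here. If you want to pursue your currying strategy, you would need to supply a replacement for that input --- for instance, a proof that $C(Y,\mathbb{R})$ admits a countable family of cozero sets forming a base-like system along which your approximants provably converge pointwise on $Y$ --- and at present no such argument is given.
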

\begin{proof}
    For the inclusion 
        $$ \mc{B} a (X) \otimes \mc{B} a (Y) \subseteq \mc{B} a (X\times Y), $$
    let $A = f^{-1}(0)$ and $B= g^{-1}(0)$ with $f\in C(X), g\in C(Y)$. Since $\mc{B} a (X) \otimes \mc{B} a (Y)$ is generated by cartesian products of the form $A\times B$, it suffices that $A\times B \in \mc{B} a (X\times Y)$, which holds, as $A\times B = (f\cdot g)^{-1}(0)$. \\
    For the reverse inclusion, it suffices to show that every continuous map $X\times Y \to \mathbb{R}$ is measurable with respect to the product $\sigma$-algebra $\mc{B} a (X) \otimes \mc{B} a (Y)$. We will show the stronger claim that, in fact, every \emph{separately} continuous function $X\times Y \to \mathbb{R}$ is $\mc{B} a (X) \otimes \mc{B} a (Y)$-measurable. In order to achieve this, we will apply certain point-set topological results regarding so-called \emph{Rudin spaces}. \\
    A topological space $Z$ is said to be a \emph{Rudin space} if every separately continuous function on $Z \times W$, where $W$ is any other topological space, is the pointwise limit of a sequence of continuous functions on $Z \times_{\mathsf{Top}} W$. We will show that every QCB space is a Rudin space. This implies the original claim, since every continuous function on $X\times_{\mathsf{Top}} Y$ is $\mc{B} a (X) \otimes \mc{B} a(Y)$-measurable (by \cite[Proposition 6.10.7]{bogachev2007measure} together with the fact that QCB spaces are hereditarily Lindelöf, see Proposition \ref{proposition:propertiesOfQCBspaces}) and pointwise limits of measurable functions are measurable.  \\ 
    Let $Z$ be any QCB space. By \cite[Theorem 6.2.(2)]{banakh2008metrically}, $Z$ is a Rudin space if $C_p(Z)$ is a Rudin space, where $C_p(Z)$ is the space of continuous functions on $Z$ with the pointwise topology (i.e.~the topology that it inherits from the $\mathsf{Top}$-product of $|Z|$-fold many copies of the real line). Now, $C_p(Z)$ has a countable network, since it is the continuous image of the QCB space $C(Z)$ under the identity map. Moreover, as a topological vector space, $C_p(Z)$ is normal. Hence, by the Lindelöf property, $C_p(Z)$ is paracompact. As a paracompact space with a countable (in particular, $\sigma$-discrete) network, by \cite[Theorem 6.2.(3), 2.3.(1)]{banakh2008metrically}, $C_p(Z)$ is Rudin space and hence, so is $Z$. This shows that every QCB space is a Rudin space, which completes the proof.
\end{proof}

As a consequence, the measure-theoretic product measure $\mu\otimes \nu$ is a Baire measure on the product $X\times Y$, for any Baire measures $\mu$ and $\nu$ on QCB spaces $X,Y$. Building on this observation, we show that the formation of product measures constitutes a continuous map.

\begin{lemma}
    Let $X, Y$ be weakly Hausdorff QCB spaces. Then the map, 
        $$ \otimes: \mathcal{M}(X) \times \mathcal{M}(Y) \to \mathcal{M}(X\times Y), \;\; (\mu, \nu) \mapsto \mu\otimes \nu, $$
     is continuous.
\end{lemma}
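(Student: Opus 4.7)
The plan is to realise $\otimes$ as a continuous map built from the enriched monad structure of $\mc{M}$, and then to identify it with the measure-theoretic product via Fubini's theorem.

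First, I would construct the canonical strengths of $\mc{M}$: using the enriched functoriality of $\mc{M}$ (Proposition \ref{proposition:enrichedContinuityM}) and Cartesian closedness of $\mathsf{QCB}_{h}$, the composite
\[
Y \longrightarrow C(X, X \times Y) \xrightarrow{\mc{M}} C(\mc{M}(X),\, \mc{M}(X \times Y)),
\]
whose first arrow sends $y$ to the inclusion $\iota_y : x \mapsto (x,y)$, is continuous. Its adjoint is the continuous \emph{right strength} $\sigma_{X,Y}: \mc{M}(X) \times Y \to \mc{M}(X \times Y)$, which by unwinding Eq.~\eqref{equation:pushforwardViaFunctionals} satisfies $\sigma(\mu, y)(f) = \mu(f(\cdot, y))$ for $f \in C_b(X \times Y)$. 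The continuous \emph{left strength} $\tau_{X,Y}: X \times \mc{M}(Y) \to \mc{M}(X \times Y)$ with $\tau(x, \nu)(f) = \nu(f(x, \cdot))$ is constructed analogously.

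Next, I form the composite
\[
\widetilde{\otimes}: \mc{M}(X) \times \mc{M}(Y) \xrightarrow{\tau_{\mc{M}(X), Y}} \mc{M}(\mc{M}(X) \times Y) \xrightarrow{\mc{M}(\sigma_{X,Y})} \mc{M}(\mc{M}(X \times Y)) \xrightarrow{\Rbag} \mc{M}(X \times Y),
\]
every arrow of which is continuous (the strengths by the previous step, the middle map by enriched functoriality of $\mc{M}$, and $\Rbag$ by the monad structure). Hence $\widetilde{\otimes}$ is continuous. Unwinding the definitions using Definition \ref{definition:UnitAndMultiplicationOfM} together with Eqs.~\eqref{equation:monadMultiplicationViaFunctionals} and \eqref{equation:pushforwardViaFunctionals} yields
\[
\widetilde{\otimes}(\mu, \nu)(f) \;=\; \int_Y \mu(f(\cdot, y))\, \mathrm{d}\nu(y) \;=\; \int_Y \!\int_X f(x, y)\, \mathrm{d}\mu(x)\, \mathrm{d}\nu(y), \qquad f \in C_b(X \times Y).
\]

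To conclude $\widetilde{\otimes} = \otimes$, Fubini's theorem applies: by Lemma \ref{lemma:productMeasuresOnQCBspaces}, the Baire $\sigma$-algebra on $X \times Y$ coincides with the product $\sigma$-algebra $\Ba(X) \otimes \Ba(Y)$, and since $\mu$ and $\nu$ are Baire measures of finite total variation and $f$ is bounded, the iterated integral equals $\int_{X \times Y} f \, \mathrm{d}(\mu \otimes \nu) = (\mu \otimes \nu)(f)$. Thus $\widetilde{\otimes}(\mu,\nu) = \mu \otimes \nu$ as elements of $\mc{M}(X \times Y) = C_b(X \times Y)^\wedge$, and the continuity of $\otimes = \widetilde{\otimes}$ follows. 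The main obstacle is the bookkeeping in the first two steps — correctly constructing the strengths from the enriched monad structure and tracing through their definitions to extract the iterated-integral formula for $\widetilde{\otimes}$; once this is done, Fubini's theorem handles the measure-theoretic identification without further effort.
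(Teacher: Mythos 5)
Your proof is correct and follows essentially the same route as the paper's: both reduce the claim to the continuity of $(\mu,\nu)\mapsto\bigl(f\mapsto\nu(y\mapsto\mu(x\mapsto f(x,y)))\bigr)$, established via Cartesian closedness, and identify this functional with the measure-theoretic product using Lemma \ref{lemma:productMeasuresOnQCBspaces} and Fubini. The paper invokes Cartesian closedness directly on the uncurried formula, whereas you package the same continuity argument as the composite $\Rbag\circ\mc{M}(\sigma_{X,Y})\circ\tau_{\mc{M}(X),Y}$ of independently constructed strengths; this is more explicit bookkeeping but not a substantively different argument.
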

\begin{proof}
    Identifying Baire measures with functionals, $\otimes$ is given by,
        $$ (\mu \otimes \nu) (f) = \nu(y \mapsto \mu(x \mapsto f(x,y))), \quad (\mu \in  \mc{M}(X), \nu \in  \mc{M}(Y), f \in C_b(X\times Y)), $$
    whereby the continuity of $(\mu, \nu) \mapsto \mu \otimes \nu$ follows from the cartesian closedness of $\mathsf{QCB}_h$.
\end{proof}


Now, the family of maps  
    $$ \otimes: \mathcal{M}(X) \times \mathcal{M}(Y) \to \mathcal{M}(X\times Y), \;\; (\mu, \nu) \mapsto \mu\otimes \nu, \;\;\;\; ((X,Y)\in \mathsf{QCB}_h \times \mathsf{QCB}_h)$$
is a natural transformation and we have that:

\begin{theorem}\label{M_bbd_meas_monad_commutative}
    $(\mathcal{M}, \delta_\bullet, \Rbag, \otimes)$ is a symmetric monoidal monad on $\mathsf{QCB}_h$. Equivalently \cite{kock1972strong}, $\mathcal{M}$ is a commutative strong monad with respect to the associated left strength, 
        $$ \lambda^{X,Y}: X \times \mathcal{M}(Y) \to \mathcal{M}(X \times Y), \;\; (x, \nu) \mapsto \delta_x \otimes \nu, $$
    and right strength, 
        $$ \rho^{X,Y}: \mathcal{M}(X) \times Y \to \mathcal{M}(X \times Y), \;\; (\mu, y) \mapsto \mu \otimes \delta_y. $$
\end{theorem}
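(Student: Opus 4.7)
The plan is to verify the axioms of a symmetric monoidal monad by translating each into a statement about continuous functionals via the Riesz representation theorem, reducing everything to identities that follow from Fubini's theorem for Baire measures on products of weakly Hausdorff QCB spaces. Continuity of all maps involved has either already been established or follows directly from Cartesian closedness, so only algebraic identities require work. Throughout, I would exploit the explicit functional description $(\mu\otimes\nu)(f) = \nu(y \mapsto \mu(x \mapsto f(x,y)))$ in conjunction with Eq.~\eqref{equation:pushforwardViaFunctionals} and \eqref{equation:monadMultiplicationViaFunctionals}.

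First, I would verify the naturality claim preceding the theorem: for continuous maps $f: X \to X'$ and $g: Y \to Y'$, the identity $(f \times g)_*(\mu \otimes \nu) = f_*\mu \otimes g_*\nu$ reduces under Riesz to a direct unpacking of the explicit formula for $\otimes$. Next, I would verify the symmetric monoidal monad axioms in sequence. The unit axiom $\delta_{(x,y)} = \delta_x \otimes \delta_y$ is immediate. The associativity coherence and the left/right unit coherences involving the canonical isomorphism $\mathbf{1} \times X \cong X$ reduce to trivial functional manipulations. Symmetry, namely $\mathrm{swap}_*(\mu \otimes \nu) = \nu \otimes \mu$, is precisely the Fubini interchange of the order of integration. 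Compatibility of $\otimes$ with $\Rbag$, i.e.~$\Rbag \circ \mc{M}(\otimes) \circ \otimes = \otimes \circ (\Rbag \times \Rbag)$, likewise unpacks into a short sequence of Fubini swaps on the functional side.

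The main obstacle, and the conceptual heart of the proof, is justifying the Fubini swap
\[
\nu\bigl(y \mapsto \mu(x \mapsto f(x,y))\bigr) \;=\; \mu\bigl(x \mapsto \nu(y \mapsto f(x,y))\bigr)
\]
for $f \in C_b(X \times Y)$ and Baire measures $\mu, \nu$. The classical Fubini theorem for signed measures applies to functions measurable with respect to the product $\sigma$-algebra $\mathcal{B}a(X) \otimes \mathcal{B}a(Y)$, so Lemma~\ref{lemma:productMeasuresOnQCBspaces} is exactly the ingredient that makes the proof go through: it guarantees $\mathcal{B}a(X \times Y) = \mathcal{B}a(X) \otimes \mathcal{B}a(Y)$ in the QCB setting, so every continuous bounded function on $X \times Y$ is indeed product-measurable. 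One further needs that the partial evaluations $x \mapsto \nu(y \mapsto f(x,y))$ and $y \mapsto \mu(x \mapsto f(x,y))$ lie in $C_b(X)$ and $C_b(Y)$ respectively, which follows from Cartesian closedness (currying $f$ into a continuous map $X \to C_b(Y)$) together with the continuity of integration against a $k$-regular measure.

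Finally, the equivalence to being a commutative strong monad with the stated strengths is a direct application of Kock's theorem \cite{kock1972strong}. Extracting the left and right strengths from the symmetric monoidal structure via the usual formulas $\lambda^{X,Y} = \otimes \circ (\delta_X \times \mathrm{id}_{\mc{M}(Y)})$ and $\rho^{X,Y} = \otimes \circ (\mathrm{id}_{\mc{M}(X)} \times \delta_Y)$ and using the unit axiom already verified above yields precisely $\lambda^{X,Y}(x,\nu) = \delta_x \otimes \nu$ and $\rho^{X,Y}(\mu, y) = \mu \otimes \delta_y$.
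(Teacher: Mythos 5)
Your proposal is correct, and it reaches the same mathematical bedrock as the paper --- the Fubini interchange for $f \in C_b(X\times Y)$, justified by Lemma~\ref{lemma:productMeasuresOnQCBspaces} --- but it approaches the theorem from the other side of Kock's equivalence. The paper works directly with the commutative-strong-monad formulation: it takes the strengths $\lambda, \rho$ as given, computes $\Rbag \circ \mathcal{M}(\rho) \circ \lambda$ and $\Rbag \circ \mathcal{M}(\lambda) \circ \rho$ explicitly as iterated integrals via Eqs.~\eqref{equation:monadMultiplicationViaFunctionals} and \eqref{equation:pushforwardViaFunctionals}, and observes that their agreement is exactly Eq.~\eqref{fubini_for_Cb}, i.e.~Fubini. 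You instead verify the symmetric monoidal monad axioms (naturality of $\otimes$, monoidality of $\delta$ and $\Rbag$, and the associativity, unit and symmetry coherences, with symmetry being the Fubini swap) and then invoke \cite{kock1972strong} to pass to the strong-monad picture. Your route is somewhat longer in the number of diagrams to be checked but is arguably more complete: the paper verifies only the single commutativity diagram and leaves the naturality of $\otimes$ and the coherence axioms implicit. You also make explicit a point the paper elides, namely that the partial integrals $x \mapsto \nu(y \mapsto f(x,y))$ land continuously in $C_b(X)$ (via currying into some $(nD)^Y$ and composing with the continuous functional $\nu$), which is needed before the iterated-integral expressions even typecheck. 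Both arguments stand or fall with Lemma~\ref{lemma:productMeasuresOnQCBspaces}, which you correctly identify as the ingredient that makes classical Fubini applicable to $\mathcal{B}a(X\times Y)$-measurable functions.
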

\begin{proof}
    We need to show that the following diagram commutes:
    \begin{center}
    \begin{tikzcd}[%
    nodes={scale=0.9}, arrows={thick},%
    ]
    & \mathcal{M}(X) \times \mathcal{M}(Y) \ar{dl}[swap]{\lambda} \ar{dr}{\rho}  \\
    \mathcal{M}(\mathcal{M}(X) \times Y) \ar{d}[swap]{\mathcal{M}(\rho)} && \mathcal{M}(X\times \mathcal{M}(Y)) \ar{d}{\mathcal{M}(\lambda)} \\
    \mathcal{M}(\mathcal{M}(X\times Y)) \ar{dr}[swap]{\Rbag} && \mathcal{M}(\mathcal{M}(X\times Y)) \ar{dl}{\Rbag} \\
    & \mathcal{M}(X\times Y)
    \end{tikzcd}
    \end{center}
    Let $\mu_0\in \mathcal{M}(x), \nu_0\in\mathcal{M}(Y)$ and $f\in C_b(X\times Y)$. Then, 
    \begin{align*}
        (\Rbag \circ \mathcal{M}(\rho) \circ \lambda (\mu_0, \nu_0))(f) 
        &= \;[\Rbag (\mathcal{M}(\rho)(\delta_{\mu_0} \otimes \nu_0) )](f) \\
        &= \;[\Rbag (F \mapsto (\delta_{\mu_0} \otimes \nu_0)[(\mu, y) \mapsto F(\mu \otimes \delta_y)])](f) \tag{Eq.~\eqref{equation:pushforwardViaFunctionals}}\\
        &= \;[\Rbag (F \mapsto \nu_0(y \mapsto F(\mu_0 \otimes \delta_y)])](f) \\
        &= \nu_0(y \mapsto (\mu_0 \otimes \delta_y)(f)) \tag{Eq.~\eqref{equation:monadMultiplicationViaFunctionals}}\\
        &= \nu_0(y \mapsto \mu_0(x \mapsto f(x, y)))\\
        &= \int_Y \int_X f(x, y) \mathrm{d} \mu_0(x) \mathrm{d} \nu_0(y).
    \end{align*}
    By symmetry, we also have,
    $$ (\Rbag \circ \mathcal{M}(\lambda) \circ \rho (\mu_0, \nu_0))(f) = \int_X \int_Y f(x, y) \mathrm{d} \nu_0(y) \mathrm{d} \mu_0(x).$$
    Hence, our claim reduces to,
        \begin{equation}\label{fubini_for_Cb}
            \int \int f(x,y) \:\mathrm{d} \mu(x) \:\mathrm{d} \nu(y)  = \int \int f(x,y) \:\mathrm{d} \nu(y) \:\mathrm{d} \mu(x).
        \end{equation}
    which follows from Fubini's theorem, using Lemma \ref{lemma:productMeasuresOnQCBspaces}.
\end{proof}

\section{The Riesz and Baire probability monads}

\subsection{Passing to probability measures}\label{sec:passingToPX}

Up to this point, our discussion has focused on $\mathbb{K}$-valued measures. We now show how to apply this to the construction of two \emph{probability} monads, the \emph{Riesz probability monad} on $\cgwh$ and the \emph{Baire probability monad} on $\mathsf{QCB}_{h}$. 

\begin{definition}
    Let $X$ be a CGWH space. We define the CGWH space of $k$-regular probability measures as the closed subspace,
        $$ \mc{P}(X) := \{\mu \in \mc{M}(X) \,|\, \mu \;\mathrm{ probability} \;\mathrm{measure}\} \subseteq \mc{M}(X), $$
    of $\mc{M}(X)$. 
\end{definition}

Note that $\mc{P}(X)$ is a QCB space whenever $X$ is a (weak Hausdorff) QCB space (by Lemma \ref{lemma:closedSubspacesQCB}), in which case $\mc{P}(X)$ consists exactly of the Baire probability measures on $X$. Since the pushforward of a probability measure is a probability measure, we obtain an endofunctor $\mc{P}$ on $\cgwh$ and on $\mathsf{QCB}_h$. We equip $\mc{P}$ with the structure of a (symmetric monoidal, in the case of $\mathsf{QCB}_h$) monad in the same way as for $\mc{M}$. The unit of $\mc{P}$ is therefore given by,
    $$ \delta_\bullet : X \mapsto \mathcal{P}(X), \;\; x \mapsto \delta_x, \;\;\;\;\; (X \in\cgwh) $$
the multiplication by,
    $$ \Rbag: \mathcal{P}(\mathcal{P}(X)) \to \mathcal{P}(X),\;\; \Rbag(\pi)(f) := \int_{\mathcal{P}(X)}\mu(f)\:\mathrm{d} \pi(\mu). \;\;\;\;\; (X \in\cgwh) $$
and, in the case of the Baire probability monad, the symmetric monoidal structure on $\mc{P}$ is given by,
    $$ \otimes: \mathcal{P}(X) \times \mathcal{P}(Y) \to \mathcal{P}(X\times Y), \;\; (\mu, \nu) \mapsto \mu\otimes \nu, \;\;\;\; ((X,Y)\in \mathsf{QCB}_h \times \mathsf{QCB}_h)$$
In conclusion, we obtain, exactly as for $\mc{M}$:

\begin{theorem}\label{theorem:rieszProbabilityMonad}
    $(\mc{P}, \delta_\bullet, \Rbag)$ is a monad on $\cgwh$, the \emph{Riesz probability monad}, which restricts to a symmetric monoidal probability monad $(\mc{P}, \delta_\bullet, \Rbag, \otimes)$ on $\mathsf{QCB}_{h}$, the \emph{Baire probability monad}.
\end{theorem}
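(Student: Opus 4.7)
The plan is to reduce everything to the corresponding facts already established for $\mc{M}$: show that $\mc{P}(X)$ is a \emph{closed} subspace of $\mc{M}(X)$, and then observe that every piece of the monad structure (and, over $\mathsf{QCB}_{h}$, every piece of the symmetric monoidal structure) on $\mc{M}$ restricts to this closed subspace, so that the monad axioms and symmetric monoidal coherences for $\mc{P}$ are inherited verbatim from those for $\mc{M}$.

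The first and main step is to verify closedness of $\mc{P}(X)$ in $\mc{M}(X)$. Under the Riesz identification $\mc{M}(X)=C_b(X)^\wedge$, a functional $\mu$ corresponds to a probability measure precisely when $\mu(1)=1$ (where $1\in C_b(X)$ denotes the constant function) and $\mu(f)\geq 0$ for every $f\in C_b(X)$ with $f\geq 0$ (interpreted as $\mu(f)\in [0,\infty)\subseteq \mathbb{K}$ in the complex case; note that positivity on non-negative real-valued functions automatically forces real-valuedness on all of $C_b(X,\R)$). For each $f\in C_b(X)$, the evaluation map $\mathrm{ev}_f\colon \mc{M}(X)\to \mathbb{K}$, $\mu\mapsto \mu(f)$, is continuous by the Cartesian closedness of $\cgwh$---it is a currying of the canonical evaluation $C_b(X)^\wedge\times C_b(X)\to \mathbb{K}$. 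Each of the above conditions is thus the preimage of a closed subset of $\mathbb{K}$ under some $\mathrm{ev}_f$, and $\mc{P}(X)$, being the intersection of all these closed sets, is closed in $\mc{M}(X)$. In particular $\mc{P}(X)$ is a CGWH space, and is moreover QCB whenever $X$ is, by Lemma~\ref{lemma:closedSubspacesQCB}.

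It then remains to check that each structural map of $\mc{M}$ actually restricts to $\mc{P}$: Dirac measures are probability measures, so $\delta_\bullet$ lands in $\mc{P}$; pushforwards manifestly preserve normalization and positivity; tensor products of probability measures are again probability measures (using Lemma~\ref{lemma:productMeasuresOnQCBspaces} and Fubini); and for the multiplication, Eq.~\eqref{equation:monadMultiplicationViaFunctionals} gives, for $\pi\in \mc{P}(\mc{P}(X))$, that $\Rbag(\pi)(1)=\pi(\mu\mapsto \mu(1))=\pi(1)=1$ and, analogously, $\Rbag(\pi)(f)\geq 0$ whenever $f\geq 0$, since $\mu\mapsto \mu(f)$ is then non-negative on $\mc{P}(X)$ and $\pi$ is a positive measure. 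Once these restrictions are in place, the monad laws and the symmetric monoidal axioms for $\mc{P}$ are simply the restrictions to the closed subspaces $\mc{P}(X),\mc{P}(\mc{P}(X)),\dots$ of the corresponding continuous-map identities already verified for $\mc{M}$, and so they hold automatically. I do not anticipate any serious obstacle; the one delicate point is the closedness step, which hinges on the observation that the defining conditions for a probability measure can each be expressed as a closed condition on the values of $\mu$ at appropriate test functions.
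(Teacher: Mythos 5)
Your proposal is correct and follows essentially the same route as the paper: the paper likewise defines $\mc{P}(X)$ as a closed subspace of $\mc{M}(X)$, notes that the unit, multiplication, pushforward and tensor all preserve probability measures, and concludes that the monad (and, over $\mathsf{QCB}_{h}$, symmetric monoidal) axioms are inherited from those of $\mc{M}$. In fact you supply more detail than the paper does, which simply asserts closedness in the definition; your verification via the continuous evaluation maps $\mathrm{ev}_f$ is a reasonable way to justify it (modulo the standard fact that positivity of the functional on nonnegative $f$ implies positivity of the Baire measure, via regularity of Baire measures with respect to zero sets).
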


\subsection{Relation to the Radon monad on compact Hausdorff spaces}\label{sec:relationToRadonMonad}

The monad $\mc{P}$ restricts to the \emph{Radon monad} on compact Hausdorff spaces (see \cite[Section 6]{van2021probability} for a detailed definition of the Radon monad, and \cite{swirszcz1974monadic} for the original account). This is the content of the following theorem.

\begin{theorem}\label{convergence_of_measures_on_compactum}
    Let $X$ be a compact Hausdorff space. Then every Baire measure on $X$ is $k$-regular (so that $\mathcal{M}(X)$ consists exactly of the Baire measures on $X$). Moreover, the topology of $\mathcal{P}(X)$ coincides with the topology of weak convergence of measures, and $\mathcal{P}(X)$ is a compact Hausdorff space. 
\end{theorem}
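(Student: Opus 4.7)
The first assertion, that every Baire measure on a compact Hausdorff $X$ is $k$-regular, is exactly part 1 of Lemma \ref{lemma:radonMeasuresKregular}, so one only needs to cite it; this yields $\mathcal{M}(X) = $ space of Baire measures on $X$. The heart of the theorem is therefore identifying the CGWH topology on $\mathcal{P}(X)$ with the topology of weak convergence, together with compactness.

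The plan is to reduce the situation to Lemma \ref{lemma:unitBallOfNaturalDualCarriesWeakStarTop} applied to the Banach space $C(X)$. Since $X$ is compact Hausdorff, every continuous function is bounded, so $C_b(X) = C(X)$ as sets. A priori, the CGWH topology on $C_b(X) = \mathrm{colim}_n (nD)^X$ could differ from the Banach (sup-norm) topology on $C(X)$, but I claim they have the \emph{same compact subspaces} with the \emph{same induced subspace topologies}. Indeed, by Lemma \ref{lemma:YtoTheXforYmetricSpace}, each $(nD)^X$ is metrisable with the uniform topology; any compact subset of $C_b(X)$ lies in some $(nD)^X$ by the usual compactness argument for sequential colimits of closed inclusions (cf.\ \cite[Lemma 3.7]{strickland2009category}); and compact subsets of the Banach space $C(X)$ are bounded, hence also lie in some $nD$-ball. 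Since the topologies coincide on each ball, the compact subspaces agree.

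This matching of compact subspaces has two consequences that I would then exploit: first, a linear functional is continuous on the CGWH space $C_b(X)$ iff it is continuous on each $(nD)^X$ iff it is bounded in the Banach sense, so $C_b(X)^\wedge = C(X)^\wedge$ as sets; second, the compact-open topologies defining $C_b(X)^\wedge$ and $C(X)^\wedge$ are built from the same generating sets $W(K,U)$, so the two natural duals coincide as CGWH spaces. Applying Lemma \ref{lemma:unitBallOfNaturalDualCarriesWeakStarTop} to the Banach space $C(X)$, the topology of $\mathcal{M}(X) = C_b(X)^\wedge$ on its unit ball coincides with the weak-$*$ topology on $C(X)'$. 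Since any probability measure has total variation norm $1$, $\mathcal{P}(X)$ sits inside this unit ball, so its CGWH subspace topology is exactly the weak-$*$ topology, which is the classical topology of weak convergence of Radon (equivalently Baire, by Theorem \ref{theorem:baireMeasuresVsRadonMeasures}) probability measures on the compactum $X$.

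For the final assertion, I would note that $\mathcal{P}(X)$ is weak-$*$ closed in the unit ball of $C(X)'$ (the defining conditions $\mu(\mathbf{1})=1$ and $\mu(f)\geq 0$ for $f\geq 0$ are preserved under weak-$*$ limits), so by Banach--Alaoglu $\mathcal{P}(X)$ is weak-$*$ compact, hence compact in its CGWH topology. Hausdorffness is automatic, because $\mathcal{P}(X)$ is a subspace of the weakly Hausdorff space $\mathcal{M}(X)$ and is itself compact, so as a compact subspace of a weakly Hausdorff space it is Hausdorff. The main subtle point — and the step I expect to require the most care — is precisely the identification $C_b(X)^\wedge = C(X)^\wedge$ as CGWH spaces, since the sequential colimit topology on $C_b(X)$ may a priori differ from the Banach topology; the proof rests on the observation that this difference disappears at the level of compact subsets and hence at the level of the compact-open topology on the dual.
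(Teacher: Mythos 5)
Your proof is correct and takes essentially the same route as the paper's: both reduce to the Banach space $C(X)$ via Lemma \ref{lemma:YtoTheXforYmetricSpace} and then apply Lemma \ref{lemma:unitBallOfNaturalDualCarriesWeakStarTop} together with Banach--Alaoglu to the unit ball of $C(X)^\wedge$, in which $\mathcal{P}(X)$ sits as a closed subset. The only difference is that you justify the identification $C_b(X)^\wedge = C(X)^\wedge$ in more detail (by matching compact subspaces), a step the paper compresses into the assertion that $C_b(X)=C(X)$ carries the uniform topology.
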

\begin{proof}
    The fact that every Baire measure on $X$ is $k$-regular was already shown in Lemma \ref{lemma:radonMeasuresKregular}. For the second part of the statement, notice that, since $X$ is compact, $C_b(X)=C(X)$ carries the uniform topology, by Lemma \ref{lemma:YtoTheXforYmetricSpace}. Moreover, $\mc{P}(X) \subseteq C(X)^\wedge $ is closed in the unit ball of the natural dual space $C(X)^\wedge$ (see Definition \ref{definition:naturalDual}), on which its topology coincides with the weak-$*$ topology (Lemma \ref{lemma:unitBallOfNaturalDualCarriesWeakStarTop}). Therefore, $\mc{P}(X)$ is compact (by Alaoglu's theorem) and its topology coincides with the topology of weak convergence of measures.
\end{proof}

\subsection{Relation to the Giry monad on Polish spaces}\label{sec:relationToGiryMonad}

In this section, we show that the Baire probability monad $\mc{P}$ on $\mathsf{QCB}_{h}$ restricts to the classical Giry monad on Polish spaces. We begin by relating convergence in $\mc{P}(X)$ to weak convergence of measures.

\begin{theorem}\label{theorem:convergenceInPXvsWeakConvergence}
    Let $X$ be a Polish space. Then a sequence $(\mu_n)$ of probability measures converges in $\mathcal{P}(X)$ if, and only if, it converges weakly.  
\end{theorem}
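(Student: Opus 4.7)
The plan is to unfold the topology on $\mathcal{P}(X)$ as uniform convergence on compact subsets of $C_b(X)$, handle the forward direction by continuity of evaluation, and combine Prokhorov's tightness theorem with an Arzel\`a--Ascoli-type covering argument for the reverse direction.

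First, I would reformulate what convergence in $\mathcal{P}(X)$ means. Since $\mathcal{P}(X) \subseteq \mathcal{M}(X) = C_b(X)^\wedge \subseteq C(C_b(X))$ carries the subspace topology, Lemma \ref{lemma:ConvergenceInYtoTheX} rephrases $\mu_n \to \mu$ in $\mathcal{P}(X)$ as uniform convergence on each compact $K \subseteq C_b(X)$. Moreover, because $C_b(X)$ is the colimit of the closed inclusions $(nD)^X \hookrightarrow ((n+1)D)^X$, the same argument used in the proof of Lemma \ref{lemma:ConvergentSequencesInCb} (invoking \cite[Lemma 3.7]{strickland2009category}) shows that any such compact $K$ must lie inside some $(nD)^X$. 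In particular, any compact $K \subseteq C_b(X)$ consists of continuous functions uniformly bounded by $n$, compact in the compact-open topology.

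The forward direction is then immediate: for each fixed $f \in C_b(X)$, the evaluation map $\mathrm{ev}_f \colon C(C_b(X)) \to \mathbb{K}$ is continuous by Cartesian closedness of $\cgwh$, so convergence of $\mu_n \to \mu$ in $\mathcal{P}(X)$ forces $\mu_n(f) \to \mu(f)$ for every $f \in C_b(X)$, which is precisely weak convergence.

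For the converse, assume $\mu_n \to \mu$ weakly and fix a compact $K \subseteq C_b(X)$ together with some $\epsilon > 0$. This is where Polishness enters crucially: by Theorem \ref{theorem:baireMeasuresVsRadonMeasures}, the Baire and Borel $\sigma$-algebras on $X$ coincide and the $\mu_n, \mu$ are automatically Radon, so Prokhorov's theorem supplies a compact $L \subseteq X$ with $\mu(X \setminus L), \mu_n(X \setminus L) < \epsilon$ uniformly in $n$. By the characterisation of compact subsets of $C_b(X)$ from the previous paragraph and Arzel\`a--Ascoli, the set of restrictions $\{f|_L : f \in K\}$ is equicontinuous and uniformly bounded, hence totally bounded in $C(L, \mathbb{K})$ with the uniform norm; cover $K$ by finitely many sup-on-$L$ $\epsilon$-balls around $f_1, \ldots, f_m$. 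For any $f \in K$, pick the nearest $f_i$ and split each integral $\mu_k(f) - \mu(f)$ into contributions from $L$ and $X \setminus L$, bounding the former by the sup-distance of $f$ and $f_i$ on $L$ plus $|\mu_k(f_i) - \mu(f_i)|$, and the latter via the tightness estimate times the uniform bound $n$. Weak convergence at the finitely many $f_i$ then suffices to make $\sup_{f \in K} |\mu_n(f) - \mu(f)|$ arbitrarily small for $n$ large. The only non-routine step is this covering-plus-tightness argument, and it is precisely where Polishness (as opposed to e.g.~a general weakly Hausdorff QCB space) is used.
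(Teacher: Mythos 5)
Your proposal is correct, and its skeleton coincides with the paper's: both directions are reduced via Lemma \ref{lemma:ConvergenceInYtoTheX} to uniform convergence on compact subsets of $C_b(X)$, the easy direction is disposed of by pointwise evaluation, and the hard direction combines Prohorov tightness with a splitting of each integral over a compact $L\subseteq X$ and its complement. The difference lies in how the contribution over $L$ is controlled. You restrict the compact family $K\subseteq C_b(X)$ to $L$, observe it is totally bounded in $C(L)$ with the sup norm, choose a finite $\epsilon$-net $f_1,\dots,f_m$, and then only need weak convergence at these finitely many test functions (extended back to $C_b(X)$); this is elementary and self-contained. The paper instead notes that the restricted measures $\mu_n|_{K_\epsilon}$ lie in the unit ball of $C(K_\epsilon)'$ and invokes Lemma \ref{lemma:unitBallOfNaturalDualCarriesWeakStarTop} (weak-$*$ equals compact-open on that ball, via the uniform boundedness principle) to upgrade weak-$*$ convergence of the restrictions to uniform convergence on the compact set $L|_{K_\epsilon}$. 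The two arguments are morally the same --- the cited uniform-boundedness corollary is itself proved by an $\epsilon$-net argument --- but yours buys something concrete: the paper's route requires the intermediate claim that $\mu_n|_{K_\epsilon}\to\mu|_{K_\epsilon}$ weakly on $K_\epsilon$, which is not automatic from weak convergence on $X$ and needs its own small tightness-plus-Tietze justification, whereas your version never restricts the measures and so avoids this step entirely. The paper's route, in turn, is shorter on the page because Lemma \ref{lemma:unitBallOfNaturalDualCarriesWeakStarTop} is already available. When writing this up, do make the last step of your estimate explicit: replacing $\int_L f_i\,\mathrm{d}(\mu_k-\mu)$ by $\mu_k(f_i)-\mu(f_i)$ introduces a correction term supported on $X\setminus L$, which is again absorbed by the tightness bound.
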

\begin{proof}
    Since $\mc{P}(X)$ is, by definition, a subspace of $\mc{M}(X)=C_b(X)^\wedge$, Lemma \ref{lemma:ConvergenceInYtoTheX} implies that a sequence $(\mu_n)$ converges in $\mc{P}(X)$ if, and only if, it converges uniformly on compact subsets of $C_b(X)$.
    Hence, the ``if'' direction is immediate. 
    
    For the converse, suppose that $\mu_n \to \mu$ weakly. We want to show that $\mu_n \to \mu$ uniformly on compact subsets of $C_b(X)$. So let $L\subseteq C_b(X)$ be compact and $\epsilon >0$. We collect a number of observations which we then combine to obtain the required estimate.
    
    1. By Prohorov's theorem \cite[p.~202, Theorem 8.6.2]{bogachev2007measure}, the family $\{|\mu_n-\mu|\}$ is uniformly tight. This means that there exists a compact subset $K_\epsilon\subseteq X$ such that for all $n\in \mathbb{N}$, 
    \begin{equation}\label{equation:uniformlyTight}
        |\mu_n-\mu|(X\mathbin{\backslash} K_\epsilon) < \epsilon.
    \end{equation}
    
    2. Since the family of functions $L\subseteq C_b(X)$ is compact, it is uniformly bounded by some constant $C>0$. 
    
    3. The restrictions $\mu_n|_{K_\epsilon}, \mu|_{K_\epsilon}$ are Radon measures on the compact Hausdorff space $K_\epsilon$, and $\mu_n|_{K_\epsilon} \to \mu|_{K_\epsilon}$ weakly. By Lemma \ref{lemma:unitBallOfNaturalDualCarriesWeakStarTop}, the weak-$*$ topology coincides with the topology of compact convergence on the unit ball of the dual of $C(K_\epsilon)$. Hence, the sequence of functionals $(\mu_n|_{K_\epsilon})$ converges uniformly on compact subsets of $C(K_\epsilon)$ to $\mu|_{K_\epsilon}$. 
    
    4. The image of $L$ under restriction of functions to $K_\epsilon$,
        $$ L|_{K_\epsilon} := \{f|_{K_\epsilon} \mid f \in L \} \subseteq C(K_\epsilon),$$
    is compact, since it is the image of the compact set $L$ under continuous map $f \mapsto f \circ \iota_{K_\epsilon}$, where $\iota_{K_\epsilon}: K_{\epsilon} \to X $ is the inclusion map. 
    
    5. Since $(\mu_n|_{K_\epsilon})$ converges uniformly on compact subsets of $C(K_\epsilon)$ and $L|_{K_\epsilon}$ is compact, we have that for sufficiently large $n$, 
    \begin{equation}\label{equation:compactConvergence}
        \sup_{f \in L} \Big| \int_{K_\epsilon} f \:\mathrm{d} \mu_n- \int_{K_\epsilon} f \:\mathrm{d} \mu \Big| = \sup_{f \in L|_{K_\epsilon}} \Big| \int f \:\mathrm{d} \mu_n|_{K_\epsilon} - \int f \:\mathrm{d} \mu|_{K_\epsilon} \Big| < \epsilon.
    \end{equation}
    
    Combining these observations, we obtain that for all $f\in L$ and sufficiently large $n$, 
    \begin{align*}
        \Big| \int f \:\mathrm{d} \mu_n - \int f \:\mathrm{d} \mu \Big| 
        &\leq \int_{X\mathbin{\backslash} K_\epsilon} |f| \:\mathrm{d} |\mu_n - \mu| + \Big| \int_{K_\epsilon} f \:\mathrm{d} \mu_n - \int_{K_\epsilon} f \:\mathrm{d} \mu \Big| \\
        &\leq  C \epsilon + \Big| \int_{K_\epsilon} f \:\mathrm{d} \mu_n - \int_{K_\epsilon} f \:\mathrm{d} \mu \Big| \tag{Eq.~\eqref{equation:uniformlyTight}, 2.}\\
        &\leq (C+1) \,\epsilon,  \tag{Eq.~\eqref{equation:compactConvergence}}
    \end{align*}
    which is what we wanted to show.
\end{proof}

Using this, we now show that probability monad $\mathcal{P}$ restricts to the Giry monad on Polish spaces.

\begin{theorem}\label{theorem:radonMeasuresPolishkRegular}
    Let $X$ be a Polish space. Then every Baire measure is a $k$-regular measure and hence, the notions of Baire, Radon and $k$-regular measure all coincide (see \ref{theorem:baireMeasuresVsRadonMeasures}). Moreover, the topology on $\mathcal{P}(X)$ is given by the topology of weak convergence of measures and $\mathcal{P}(X)$ is again a Polish space.
\end{theorem}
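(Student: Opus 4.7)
The plan is to verify the three claims in sequence, leveraging the earlier results and the fact that every Polish space is in particular a weakly Hausdorff QCB space (second-countable and Hausdorff, hence QCB and weakly Hausdorff).

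\textbf{Step 1: coincidence of measure-theoretic notions.} Since $X$ is a Polish space, it is also a weakly Hausdorff QCB space, so the second clause of Lemma \ref{lemma:radonMeasuresKregular} applies: every Baire measure on $X$ is $k$-regular. Combining this with Theorem \ref{theorem:baireMeasuresVsRadonMeasures}, which asserts that the Baire, Borel and Radon notions already coincide on a Polish space, we obtain that all three notions (Baire, Radon, $k$-regular) are the same.

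\textbf{Step 2: the topology on $\mathcal{P}(X)$ is the weak convergence topology.} By Theorem \ref{theorem:convergenceInPXvsWeakConvergence}, the two topologies on the underlying set $\mathcal{P}(X)$ — namely (i) the CGWH topology inherited from $\mathcal{M}(X) = C_b(X)^\wedge$, and (ii) the classical topology of weak convergence — have the same convergent sequences. I would then upgrade ``same convergent sequences'' to ``same topology'' by arguing that both topologies are sequential. For (i), note that $X$ is a weakly Hausdorff QCB space, so $\mathcal{M}(X)$ is one as well (Lemmas \ref{lemma:CbQCB} and \ref{lemma:naturalDualQCB}); its closed subspace $\mathcal{P}(X)$ is then a weakly Hausdorff QCB space by Lemma \ref{lemma:closedSubspacesQCB}, hence sequential by Proposition \ref{proposition:propertiesOfQCBspaces}. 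For (ii), the weak convergence topology on probability measures on a Polish space is metrisable (e.g.\ by the Lévy--Prohorov metric, see \cite[Theorem 8.3.2]{bogachev2007measure}) and hence sequential. Since a sequential topology is determined by its convergent sequences, the two topologies must agree.

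\textbf{Step 3: $\mathcal{P}(X)$ is Polish.} Having identified the topology on $\mathcal{P}(X)$ with the topology of weak convergence, we invoke the classical fact that for any Polish space $X$, the space of Borel probability measures on $X$ with the topology of weak convergence is itself Polish (see, e.g., \cite[Theorem 8.9.4]{bogachev2007measure}).

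\textbf{Expected obstacle.} The only non-bookkeeping point is the upgrade in Step 2 from sequential agreement to agreement of topologies. This hinges on metrisability of the weak topology on $\mathcal{P}(X)$ for Polish $X$ — a classical but nontrivial input — together with the sequentiality of QCB spaces established earlier in the paper. Everything else amounts to stitching together Lemma \ref{lemma:radonMeasuresKregular}, Theorem \ref{theorem:convergenceInPXvsWeakConvergence} and standard Polish-space measure theory.
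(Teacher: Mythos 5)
Your proposal is correct and takes essentially the same route as the paper: Lemma \ref{lemma:radonMeasuresKregular} for $k$-regularity, Theorem \ref{theorem:convergenceInPXvsWeakConvergence} as the key comparison of convergent sequences, and \cite[Theorem 8.9.4]{bogachev2007measure} for Polishness of the weak topology. The only (immaterial) difference is in how you upgrade sequential agreement to equality of topologies: the paper shows the identity $\mathcal{P}(X)\to\mathcal{P}_w(X)$ is continuous directly and uses metrisability of $\mathcal{P}_w(X)$ only to get continuity of the inverse from sequential continuity, whereas you argue symmetrically that both topologies are sequential (via the QCB machinery on one side and the L\'evy--Prohorov metric on the other) and hence determined by their convergent sequences --- both arguments are valid.
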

\begin{proof}
    That every Baire measure is $k$-regular was already shown in Lemma \ref{lemma:radonMeasuresKregular}. Now, let $\mathcal{P}_w(X)$ be the space of Baire (equivalently, Radon, by Theorem \ref{theorem:baireMeasuresVsRadonMeasures}) measures on $X$ with the topology of weak convergence. As sets, $\mathcal{P}_w(X)=\mathcal{P}(X)$. We want to show that the respective topologies coincide, as well. First, note that the identity map $\mathcal{P}(X) \to \mathcal{P}_w(X)$ is continuous, since a convergent net in $\mathcal{P}(X)$ is also weakly convergent. By \cite[p.~213, Theorem 8.9.4]{bogachev2007measure}, our assumption that $X$ is a Polish space implies that $\mathcal{P}_w(X)$ is a Polish space, too. Hence, sequential continuity of the identity map $\mathcal{P}_w(X) \to \mathcal{P}(X)$, which is provided by Theorem \ref{theorem:convergenceInPXvsWeakConvergence}, implies continuity, so the two topologies coincide and $\mathcal{P}(X)=\mathcal{P}_w(X)$ also as topological spaces.
\end{proof}

\subsection{The Baire probability monad is strongly affine}\label{sec:BaireMonadStronglyAffine}

A basic phenomenon in probability theory is that a deterministic random variable is independent of any other random variable. A measure-theoretic way to express this is the following. If one of the marginals of a probability measure on a product is a point mass, then it is the product of its marginals. Curiously, this \emph{deterministic marginal independence} can \emph{fail} for quasi-Borel spaces \cite[Section 3]{fritz2023dilations}. Such (perhaps counter-intuitive) behaviour cannot occur when working with weakly Hausdorff QCB spaces: the Baire probability monad on $\mathsf{QCB}_{h}$ is \emph{strongly affine} \cite[Definition 1]{jacobs2016affine}. 

\begin{definition}
    Let $T$ be a strong monad on a category $\mathsf{C}$ with finite products. Then $T$ is \emph{strongly affine} if for all objects $X,Y$ of $\mathsf{C}$, the following diagram is a pullback square:
    \[\begin{tikzcd}
	{T(X)\times Y} & Y \\
	{T(X\times Y)} & {T(Y)}
	\arrow["{\pi_2}", from=1-1, to=1-2]
	\arrow["s"', from=1-1, to=2-1]
	\arrow["{T(\pi_2)}"', from=2-1, to=2-2]
	\arrow["\eta", from=1-2, to=2-2]
    \end{tikzcd}\]
    Here, $\eta$ is the unit of $T$ and $s$ is the right strength. 
\end{definition}

For the Baire probability monad $\mc{P}$ on $\mathsf{QCB}_{h}$, this will come down to the aforementioned phenomenon of deterministic marginal independence, as we now show. The proof relies heavily on the fact that product measures are well-behaved on QCB spaces (they agree with the usual measure-theoretic ones, see Lemma \ref{lemma:productMeasuresOnQCBspaces}); this is not the case for general CGWH spaces.

\begin{theorem}
    The probability monad $\mathcal{P}$ on weakly Hausdorff QCB spaces is strongly affine. 
\end{theorem}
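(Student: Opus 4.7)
My plan is to unwind the pullback condition in the definition of strongly affine explicitly and reduce the claim to a single key measure-theoretic fact: that a Baire probability measure $\nu$ on $X\times Y$ whose second marginal is a Dirac measure $\delta_y$ must equal $(\pi_1)_*\nu \otimes \delta_y$. Concretely, the square I need to verify is a pullback is
\[
\begin{tikzcd}
\mc{P}(X)\times Y \ar[r, "\pi_2"] \ar[d, "\rho^{X,Y}"'] & Y \ar[d, "\delta_\bullet"] \\
\mc{P}(X\times Y) \ar[r, "\mc{P}(\pi_2)"'] & \mc{P}(Y)
\end{tikzcd}
\]
where $\rho^{X,Y}(\mu, y) = \mu \otimes \delta_y$ is the right strength. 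So given any weakly Hausdorff QCB space $Z$ together with continuous maps $f: Z \to Y$ and $g: Z \to \mc{P}(X\times Y)$ satisfying $\mc{P}(\pi_2)\circ g = \delta_\bullet \circ f$, I need to exhibit a unique continuous $h: Z \to \mc{P}(X)\times Y$ with $\pi_2 \circ h = f$ and $\rho^{X,Y}\circ h = g$.

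First I would prove the deterministic marginal independence statement above. This is where Lemma \ref{lemma:productMeasuresOnQCBspaces} does the real work: since $\mc{B}a(X\times Y) = \mc{B}a(X)\otimes\mc{B}a(Y)$ for weakly Hausdorff QCB spaces, two Baire probability measures on $X\times Y$ agree as soon as they agree on measurable rectangles $A\times B$. For $y\in B$, the hypothesis $(\pi_2)_*\nu = \delta_y$ gives $\nu(A \times (Y\setminus B)) \leq \nu(X\times(Y\setminus B)) = 0$, hence $\nu(A\times B) = \nu(A\times Y) = (\pi_1)_*\nu(A) = (\pi_1)_*\nu(A)\cdot\delta_y(B)$; for $y\notin B$, both sides are zero by the same kind of estimate.

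Next, I would define $h(z) := ((\pi_1)_* g(z),\, f(z))$. Continuity of $h$ is automatic: $(\pi_1)_* = \mc{P}(\pi_1)$ is continuous by functoriality of $\mc{P}$, and pairing into a product is continuous. The identity $\pi_2\circ h = f$ is immediate, and $\rho^{X,Y}\circ h = g$ reduces to $(\pi_1)_* g(z) \otimes \delta_{f(z)} = g(z)$, which is exactly the deterministic marginal independence statement applied to $\nu = g(z)$, once one notes that the hypothesis $\mc{P}(\pi_2)\circ g = \delta_\bullet\circ f$ unpacks to $(\pi_2)_* g(z) = \delta_{f(z)}$. For uniqueness, if $h' = (h'_1, h'_2)$ is another such map, then $h'_2 = \pi_2 \circ h' = f$, and $h'_1(z)\otimes\delta_{f(z)} = \rho^{X,Y}(h'(z)) = g(z)$; taking first marginals forces $h'_1(z) = (\pi_1)_*g(z)$.

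The main obstacle is really the deterministic marginal independence lemma, which depends crucially on Lemma \ref{lemma:productMeasuresOnQCBspaces}. It is precisely the failure of this $\sigma$-algebra identity on general CGWH spaces---and an analogous failure on quasi-Borel spaces---that makes the strongly affine property nontrivial; once one has a clean product-measure theory in the QCB setting, the rest of the argument is essentially formal manipulation of marginals.
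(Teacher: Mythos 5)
Your proposal is correct and follows essentially the same route as the paper: the entire argument rests on the deterministic marginal independence lemma, proved via the identity $\mathcal{B}a(X\times Y)=\mathcal{B}a(X)\otimes\mathcal{B}a(Y)$ for QCB spaces (Lemma \ref{lemma:productMeasuresOnQCBspaces}) and the same two-case computation on measurable rectangles. The only (immaterial) difference is that you verify the universal property of the pullback directly against a test object, whereas the paper uses the explicit construction of pullbacks in $\mathsf{QCB}_h$ and shows the canonical comparison map is a continuous bijection with continuous inverse.
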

\begin{proof}
    Let $X$ and $Y$ be weakly Hausdorff QCB spaces. We have to show that the following diagram is a pullback square:
    \[\begin{tikzcd}
	{\mathcal{P}(X)\times Y} & Y \\
	{\mathcal{P}(X\times Y)} & {\mathcal{P}(Y)}
	\arrow["{\pi_2}", from=1-1, to=1-2]
	\arrow["{\delta_\bullet}", from=1-2, to=2-2]
	\arrow["{(\pi_2)_*}"', from=2-1, to=2-2]
	\arrow["{-\:\otimes \:\delta_{-}}"', from=1-1, to=2-1]
    \end{tikzcd}\]
    By how pullbacks are constructed in $\mathsf{QCB}_{h}$ (equivalently, $\cgwh$), this is equivalent to showing that the canonical map,
        $$ \Phi: \mathcal{P}(X) \times Y \to \{(\mu, y) \in \mc{P}(X\times Y) \times Y \mid (\pi_2)_*\mu = \delta_y \}, \;\;(\nu, y) \mapsto (\nu \otimes \delta_y, y), $$
    is an isomorphism (where the codomain is equipped with the subspace topology). Note first that $\Phi$ has a continuous left inverse, given by $(\mu, y) \mapsto ((\pi_1)_*\mu, y)$. Hence, it suffices to verify that $\Phi$ is surjective. Let $\mu \in \mc{P}(X\times Y)$ and $y\in Y$ such that $(\pi_2)_*\mu = \delta_y$. We want to show that there exists a $\nu\in \mc{P}(Y)$ such that $\mu = \nu \otimes \delta_y$. Our claim is now that one may take $\nu := (\pi_1)_*\mu$, i.e. that $\mu$ is the product of its marginals. Since $X$ and $Y$ are QCB spaces, the Baire $\sigma$-algebra on the product coincides with the product of the Baire $\sigma$-algebras on the factors (see Lemma \ref{lemma:productMeasuresOnQCBspaces}):
        $$\Ba(X\times Y) = \Ba(X) \otimes \Ba(Y) = \sigma(\{ A \times B \mid A \in \Ba(X), B\in \Ba(Y)\}). $$
    Therefore, our claim reduces to showing that for all $A \in \Ba(X), B\in \Ba(X)$, 
    \begin{equation}\label{eq:nonintertwined}
        \mu(A \times B) = \nu(A) \cdot \delta_y(B).
    \end{equation}
    The proof is now exactly as in the case of the Giry monad given in \cite[Example 1]{jacobs2016affine}. For the convenience of the reader, we reproduce it here. There are two cases:
    \begin{enumerate}
        \item \emph{Suppose that $y\not\in B$.} We show that both sides of \eqref{eq:nonintertwined} vanish. For the right hand side, this is immediate, while for the left hand side, this follows from the monotonicity of $\mu$:
            $$ \mu(A \times B) \leq \mu(X \times B) = \delta_y(B) = 0. $$
        \item \emph{Suppose that $y \in B$.} Then by what we have just shown,
            $ \mu(A \times (X \setminus B)) = 0, $ and hence,
        $$\mu(A \times B) = \mu(A \times B) +  \mu(A \times (X \setminus B)) 
                            = \mu(A \times Y) = \nu(A) \cdot \delta_y(B).
        $$
    \end{enumerate}
    This completes the proof.
\end{proof}

\bibliographystyle{alpha}
\bibliography{bibliography}

\end{document}